\theoremstyle{plain}
\newtheorem{theorem}{Theorem}[section]
\newtheorem{lemma}[theorem]{Lemma}
\newtheorem{proposition}[theorem]{Proposition}
\theoremstyle{definition}
\newtheorem{remark}[theorem]{Remark}
\newtheorem{definition}[theorem]{Definition}
\renewcommand{\epsilon}{\varepsilon}
\renewcommand{\div}{\mathrm{div \ }}
\newcommand{\uetau}{u_\epsilon^{(\tau)}}
\newcommand{\petau}{p_\epsilon^{(\tau)}}
\newcommand{\ue}{u_\epsilon}
\newcommand{\we}{w_\epsilon}
\newcommand{\re}{\rho_\epsilon}
\newcommand{\scalar}[2]{\langle #1, #2 \rangle}
\newcommand{\pe}{p_\epsilon}
\newcommand{\dt}{\partial_t}
\newcommand{\dtau}{\partial^{\tau}_t}
\newcommand{\iO}{\int_{\Omega}}
\newcommand{\norma}[2]{\lVert #1 \rVert_{#2}}
\title{Artificial compressibility method for the
	Navier-Stokes-Maxwell-Stefan system}
\author{\large{\bf{Michele Dolce}} \\[1ex]
	\normalsize GSSI - Gran Sasso Science Institute \\
	\normalsize Viale F.\,Crispi, 7 \\
	\normalsize 67100 L'Aquila, Italy.\\
	\normalsize  \href{mailto:michele.dolce@gssi.it}{michele.dolce@gssi.it}
	\and
	\large{\bf{Donatella Donatelli}} \\[1ex]
	\normalsize Department of Information Engineering, Computer Science and Mathematics \\
	\normalsize University of L'Aquila \\
	\normalsize 67100 L'Aquila, Italy. \\
	\normalsize \href{mailto:donatella.donatelli@univaq.it}{donatella.donatelli@univaq.it}
	}
\date{}
\numberwithin{equation}{section}
\begin{document}

\maketitle

\begin{abstract}
The Navier-Stokes-Maxwell-Stefan system describes the dynamics of an incompressible gaseous mixture in isothermal condition. In this paper we set up an artificial compressibility type approximation. In particular we focus on the existence of solution for the approximated system and the convergence to the incompressible case. The existence of the approximating system is proved by means of semidiscretization in time and by estimating the fractional time derivative.
\end{abstract}
\section{Introduction}
The Navier-Stokes-Maxwell-Stefan system is used to describe the dynamics of a multicomponent gaseous mixture, where the velocity of the mixture is described via the classical Navier-Stokes equations while the diffusion of the species, that compose the mixture, is given by the Maxwell-Stefan equations, that describes a non linear cross diffusion. For the time being it is well known that in some situation this type of description is much more realistic than a standard Fick's diffusion, since with the last one, it is not possible to model behaviours that may occur in a multicomponent mixture, see \cite{DUTO}, \cite{WEKR}. One of the possible application of the Maxwell-Stefan equations is for example the case of a patient who has respiratory problem in the lower part of the lung, like asthma, and a mixture of helium and oxygen is used to help him. In this case, the Maxwell-Stefan equations predicts a benefit for the patient while the Fick's law does not. For a detailed discussion on this argument we refer to \cite{BO}. \\ \\ \indent 
The Navier-Stokes-Maxwell-Stefan system is set up in the following way.\\
We consider a system of $N+1$ ideal gases, in isothermal and isolated conditions, with mass densities $\rho_i$, molar mass $M_i$ and velocity $u_i$. The molar masses could be also different.  The total mass density is $\rho^*=\sum_{1}^{N+1}\rho_i$ and the barycentric velocity is $u=(\sum_{i=1}^{N+1}\rho_iu_i)/\rho^*$. \\
Then the continuity equation is given by 
\begin{equation*}
\dt \rho_i+\div (\rho_iu_i)=0.
\end{equation*}
For our purpose it is better to describe the velocity of a particle with respect to the barycentric velocity, so we define the mass fluxes $J_i=\rho_i(u_i-u)$ and the continuity equations can be written as
\begin{equation}
\label{conteq}
\dt \rho_i+\div(J_i+\rho_iu)=0.
\end{equation}
Here we consider the incompressible case, where we assume $\rho^*=\mathrm{const.}$, and for simplicity of notation we set $\rho^*=1$. Summing up \eqref{conteq} for $i=1,\dots, N+1$ we infer that
\begin{equation}
\label{inceq}
\div u=0.
\end{equation}
Using the fact that $\rho^*=1$, the conservation of linear momentum for the barycentric velocity, with external forces $f_i=f$ equal for all the particles, is given by
\begin{equation*}
\dt u+(u\cdot \nabla)u=\div(-p\mathrm{I}+\sigma)+f,
\end{equation*}
where $-p\mathrm{I}+\sigma$ is the tensor that describes the stresses acting on the boundary, $p$ is the usual pressure and $\sigma$ is the viscous stress tensor.  Setting $\sigma=\nu^*(\nabla u+\nabla u^T)$, where for simplicity of notation we consider $\nu^*=1$, we have that 
\begin{equation}
\label{navst}
\dt u+(u \cdot \nabla) u-\Delta u +\nabla p =f.
\end{equation}
To close the above relations we need equations for the velocities $u_i$ or, and this is the case, for the mass fluxes $J_i$. The Maxwell-Stefan equations models a cross-type diffusion that relates the mass fluxes $J_i$ to the densities $\rho_i$ in a nonlinear way. In a famous experiment \textsc{Duncan} and \textsc{Toor}, see \cite{DUTO}, proved that the predictions of the equations fits very well with experimental data. In particular the Maxwell-Stefan equations are very useful to describe the \textit{uphill diffusion} that simplest models, such as Fick's diffusion, can not describe. To write down the equations, we define the molar fractions as 
\begin{equation}
\label{molarfrac}
x_i=\frac{\rho_i/M_i}{\sum_{k=1}^{N+1}\rho_k/M_k}=\frac{\rho_i}{cM_i},
\end{equation}
where $c=\sum_{k=1}^{N+1}\rho_k/M_k$, and in general is not a constant. Notice that by definition it holds that $\sum_{i=1}^{N+1}x_i=1$.\\
In our particular case of ideal, isolated and isothermal conditions with equal forces acting on the mixture, the Maxwell-Stefan equations are given by 
\begin{equation}
\label{maxstef}
\nabla x_i=-\sum_{k=1}^{N+1}\frac{\rho_kJ_i-\rho_iJ_k}{c^2M_iM_kD_{ik}}, \ \ \ \ i=1,\dots,N+1,
\end{equation}
where $D_{ij}$ are the diffusion coefficients.
For a detailed discussion on the thermodynamical derivation of the Maxwell-Stefan equations we refer to \cite{BD}, but them can be also derived from a BGK-type collision operator, see \cite{AAP}.\\ \\ \indent
The Navier-Stokes-Maxwell-Stefan system is given by the following set of equations:
\begin{align}
\label{preeq1}&\dt \rho_i+\div(J_i+\rho_iu)=0, \ \ \ \mathrm{in} \ \Omega, \ t>0\\
\label{preeq2}&\nabla x_i=-\sum_{k=1}^{N+1}\frac{\rho_kJ_i-\rho_iJ_k}{c^2M_iM_kD_{ik}}, \ \ \ \ i=1,\dots,N+1,\\
&\dt u+(u \cdot \nabla) u-\Delta u +\nabla p =f,\\
\label{preeq4}&\div u=0,
\end{align}
where $\Omega\subset\mathbb{R}^d$ $(d\leq3)$ is a bounded domain. 
The initial and boundary conditions are 
\begin{equation}
\label{incond}
\begin{split}
&\rho_{i}(\cdot,0)=\rho_i^0, \ \ \ u(\cdot,0)=u^0 \ \ \ \mathrm{in} \ \Omega,\\
&\nabla \rho_{i }\cdot \nu=0, \ \ \ u=0  \ \ \ \mathrm{on} \ \partial \Omega.
\end{split}
\end{equation}
The system \eqref{preeq1}-\eqref{preeq4} was analysed by Chen, J\"ungel and Stelzer in \cite{CJ} and \cite{JUST}. They have obtained the existence of global weak solution and exponential decay to the stationary state.
The main idea of their paper is to introduce the so called entropy variables, to prove lower and upper bounds for the densities without using maximum principles. \\ \\ \indent
In this paper we will study the system \eqref{preeq1}-\eqref{preeq4} from a different point of view. Since it is well known that the incompressibility constraint \eqref{preeq4} is very expensive for numerical simulation, we construct an approximation which relaxes the condition \eqref{preeq4}. In fact Chorin \cite{CH1}, \cite{CH2}, Témam \cite{TE1}, \cite{TE2} and Oskolkov \cite{OS} introduced the so called \textit{artificial compressibility method} in a bounded domain. 
The idea is to consider the pressure as function of the density and linearize this relation. Then, writing down the continuity equation in terms of the pressure, and considering only the linear part we obtain that 
\begin{equation*}
\epsilon  \dt p+ \div u=0,
\end{equation*}
where $\epsilon$ has the dimension of the Mach number. \\
Notice that since we lose the divergence free condition, we may have some problem such as the increase of the total energy. Therefore, in order to avoid this paradox, we need to change the other equations of the system and we add some corrections terms that in the limit as $\epsilon\to 0$ will vanish.  \\ \\ \indent
The goal of this paper is to show the convergence as $\epsilon\to 0$ of the weak solutions of the artificial compressibility system \eqref{aprdens}-\eqref{aprnav2} towards the weak solutions of the Navier-Stokes-Maxwell-Stefan system \eqref{preeq1}-\eqref{preeq4}.\\
The main stumbling block in the proof of this convergence result is to get the compactness in time for the velocity field. As we will see later on, this will be done by estimating the fractional time derivative of the velocity $u$. It is important to point out that we will deal directly with the different quotient, without introducing a linear interpolation of the discretized solution, as usually done (see for example \cite{TE}). \\ \indent
For completeness we mention that the artificial compressibility approximation was also used in the case of the whole domain \cite{DO1} and exterior domain \cite{DO3} for the Navier-Stokes equation, and modified in a suitable way in the case of the Navier-Stokes-Fourier system \cite{DO2} and the MHD equations \cite{DO4}.
\\ \\ \indent
The paper is organized as follows. In Section \ref{secstate} we first rewrite the system \eqref{preeq1}-\eqref{preeq4} in a more convenient way and then we set up the artificial compressibility approximation system \eqref{aprdens}-\eqref{aprnav2}. Then we state the main results: Theorem \ref{thexistence}, where we prove the global existence for the approximated solution and Theorem \ref{thconv}, where we get the convergence of the approximated system to the original one. \\ \indent 
In Section \ref{mathtools} we recall some mathematical result that we need. In particular in \eqref{defwi} we recover the entropy variables and their properties. \\ \indent
In Section \ref{secproof1} we prove Theorem \ref{thexistence} via semidiscretization in time, that for the approximated Navier-Stokes equations is of interest by itself. \\ \indent
Finally, Section \ref{secproof2} is devoted to the proof of Theorem \ref{thconv}. 
\section{Statement of the problem and main results}
\label{secstate}
The aim of this paper is to define the artificial compressibility approximation for the system \eqref{preeq1}-\eqref{preeq4}.
First of all, we rewrite the equations \eqref{preeq1} and \eqref{preeq2} in order to be handled in a more easier way. Define $\rho=(\rho_1,\dots,\rho_{N+1})^T$ and similarly $J$ and $x$. Then we write \eqref{preeq1} and \eqref{preeq2} as follows 
\begin{align}
\label{matr1}&\dt \rho+(u\cdot \nabla)\rho+\div(J)=0,\\
\label{matr2}&\nabla x=-A(\rho)J,
\end{align}
where the matrix $A(\rho)=(A_{ij})_i^j$, setting $d_{ij}=1/(c^2M_iM_jD_{ij})$, is defined as 
\begin{equation}
\label{matrA}
\begin{split}
A_{ij}&=-d_{ij}\rho_i \ \ \ \mathrm{if} \ i \neq j, \ i,j=1,\dots,N+1, \\
A_{ii}&=\sum_{k=1, k\neq i}^{N+1}d_{ik}\rho_{k} \ \ \ \mathrm{if} \ i=1,\dots,N+1.
\end{split}
\end{equation}
In addition  it is possible to reduce \eqref{matr1} and \eqref{matr2}, to the first $N$ species, using the relation $\rho_{N+1}=1-\sum_{i=1}^{N}\rho_i$. Define $\rho'=(\rho_1,\dots,\rho_N)$ and similarly $x'$ and $J'$. Introduce the following auxiliary matrix 
\begin{equation}
P=I_{N+1}+e_{N+1}\otimes \tilde{e}, \ \ \ P^{-1}=I_{N+1}-e_{N+1}\otimes \tilde{e}
\end{equation}
where $\{e_i\}_{i=1}^{N+1}$ is the canonical basis of $\mathbb{R}^{N+1}$, as column vectors, and we set $\tilde{e}=\sum_{i=1}^{N}e_i$. Now let $v\in \mathbb{R}^{N+1}$ such that $v_{N+1}=1-\sum_{i=1}^{N+1}v_i$, then it holds that $Pv=(v_1,\dots,v_N,0)^T$. So applying on the left $P$, to equations \eqref{matr1} and \eqref{matr2}, using the linearity of the operators and the incompressibility condition, a standard computation shows that \eqref{matr1} and \eqref{matr2} are equivalent to the following,
\begin{align}
\label{eqrho'}&\dt \rho'+(u \cdot \nabla)\rho'+\div J'=0,\\
\label{eqx'}&\nabla x'=-A_0(\rho')J',
\end{align} 
where the matrix $A_0(\rho')=(A_{ij}^0)_i^j$, recalling that $d_{ij}=1/(c^2M_iM_jD_{ij})$, is given by 
\begin{equation}
\label{defA0}
\begin{split}
A^0_{ij}&=-(d_{ij}-d_{i,N+1})\rho_i \ \ \ \mathrm{if} \ i\neq j, i,j=1,\dots,N, \\
A^0_{ii}&=\sum_{k=1, k\neq i}^{N}(d_{ik}-d_{i,N+1})\rho_k+d_{i,N+1} \ \ \ \mathrm{for} \ i=1,\dots,N.
\end{split}
\end{equation}  \\
Hence the Navier-Stokes-Maxwell-Stefan system in the incompressible case, when reduced to the first $N$ species, is the following
\begin{align}
&\dt \rho'+(u\cdot \nabla)\rho' +\div J'=0, \ \ \ \mathrm{in} \ \Omega, \ t>0 \label{inceq1}\\
&\nabla x'=-A_0(\rho')J'\label{inceq2},\\
&\dt u + (u\cdot \nabla )u - \Delta u + \nabla p= f, \label{inceq3}\\
&\div u=0.\label{inceq4}
\end{align} 

Now we are ready to set up the artificial compressibility approximation for \eqref{inceq1}-\eqref{inceq4}. For the Navier-Stokes part \eqref{inceq3}-\eqref{inceq4} we consider the usual approximation, as in \cite{TE}, while in the equation for densities \eqref{inceq1}, to control the growth of the entropy functional, we need to add a new term.\\ \indent
Finally, the artificial compressibility approximation system, written for the first $N$ species, reads as
\begin{align}
\label{aprdens}&\dt \rho'_\epsilon + u_\epsilon \cdot \nabla \rho'_\epsilon+(\div\ue) \rho'_\epsilon+ \div(J'_\epsilon)=0, \ \ \ \mathrm{in} \ \Omega, \ t>0,  \\
\label{aprmax}& \nabla x'_\epsilon = -A_0(\rho_\epsilon')J'_\epsilon ,\\
\label{aprnav1}
&\dt u_\epsilon + u_\epsilon \cdot \nabla u_\epsilon +\frac{1}{2} (\div u_\epsilon) u_\epsilon -\Delta u_\epsilon + \nabla p_\epsilon =f \\
\label{aprnav2}& \epsilon \ \dt p_\epsilon+ \div u_\epsilon=0,
\end{align}
where $\Omega\subset\mathbb{R}^d$ $(d\leq3)$ is a bounded domain with the following initial and boundary conditions
\begin{equation}
\label{incondeps}
\begin{split}
&\rho_{\epsilon,i}(\cdot,0)=\rho_i^0, \ \ \ u_\epsilon(\cdot,0)=u_0 \ \ \ p_\epsilon(\cdot,0)=p_0 \ \ \ \mathrm{in} \ \Omega,\\
&\nabla \rho_{\epsilon,i }\cdot \nu=0, \ \ \ u_\epsilon=0  \ \ \ \mathrm{on} \ \partial \Omega.
\end{split}
\end{equation}

As we will see the two corrections terms $(\div \ue) \re'$ and $ (1/2)(\div \ue)\ue$ will allow us to control the growth of the entropy functional and will avoid the increase of the total energy respectively, both due to the loss of the incompressibility constraint \eqref{inceq4}.\\
In this paper we prove the existence of solutions for the system \eqref{aprdens}-\eqref{aprnav2}, via semidiscretization in time, that for the approximated Navier-Stokes equations as far as we know has some new aspect. Then we prove the convergence, as $\epsilon\to 0$, to the system \eqref{inceq1}-\eqref{inceq4}.\\ \\ \indent 
Before stating our main theorems, we introduce the following spaces.\\
Let $\Omega\subset \mathbb{R}^d$ be a bounded domain with $\partial \Omega \in C^{1,1}$ and let
\begin{equation}
\label{spaces}
\begin{split}
\mathcal{D}(\Omega;\mathbb{R}^n)&=\Set{\varphi\in C^\infty_0(\Omega;\mathbb{R}^n),\ \mathrm{for} \ n\in \mathbb{N}},\\
\mathcal{H}&=\Set{u\in L^2(\Omega,\mathbb{R}^d): \ \div u=0, \ u\cdot \nu|_{\partial \Omega}=0},  \\
\mathcal{V}&=\Set{u\in H^1_0(\Omega, \mathbb{R}^d) : \ \div u=0 },\\ 
\ \mathcal{V}_2&=\mathcal{V}\cap H^2(\Omega, \mathbb{R}^d),  \\
\widetilde{H}^2(\Omega, \mathbb{R}^N)&=\Set{q \in H^2(\Omega, \mathbb{R}^N) : \ \nabla q \cdot \nu |_{\partial \Omega}=0  },\\
H^{\gamma}\big(0,T;H^1_0(\Omega),L^2(\Omega)\big)&=\Set{u: \norma{u}{L^2(0,T;H^1_0(\Omega))}+\norma{\lvert s\rvert^\gamma\hat{u}}{L^2(0,T;L^2(\Omega))}<+\infty},  
\end{split}
\end{equation}
where $\hat{u}$ is the Fourier transform in time defined as
\begin{equation}
\label{Ftransform}
\hat{u}(s)=\int_\mathbb{R}e^{-2\pi i st}u(t)dt.
\end{equation}

\begin{theorem}[Global existence]
	\label{thexistence}
	For any $\epsilon>0$, let $d=1,2,3$, $T>0$, $D_{ij}=D_{ji}>0$, for $i,j=1,...,N+1, \ i\neq j $. Suppose that $f\in L^2(0,T;H^{-1}(\Omega))$, $u_0\in \mathcal{H}$, $p_0\in L^2(\Omega)$ and take $\rho_1^0,...,\rho_{N+1}^0\in L^1(\Omega)$ be nonnegative functions which satisfy $\sum_{1}^{N+1}\rho^0_i=1$ and $h(\rho_0)<+\infty$, where $\rho_0=(\rho_1^0,...,\rho_{N}^0)$ and $h$ is defined in \eqref{defhrho}. Then there exists a global weak solution $(\ue,\pe,\rho_{\epsilon,1},...,\rho_{\epsilon, N+1}))$ to \eqref{aprdens}-\eqref{aprnav2} (in the sense given in \eqref{weakapru}, \eqref{defweakrho}) such that $\rho_{\epsilon,i}\geq0$, $\sum_{1}^{N+1}\rho_{\epsilon,i}=1$ in $\Omega\times (0,T)$, and 
	\begin{equation*}
	\begin{split}
	& \ue \in L^\infty(0,T; L^2(\Omega))\cap H^\gamma(0,T;H^1_0(\Omega),L^2(\Omega)),\  \text{for any  $0<\gamma<1/4$},\\
	&\rho_{\epsilon, i}\in L^2(0,T;H^1(\Omega)), \ \ \dt \rho_{\epsilon, i} \in L^2(0,T;\widetilde{H}^2(\Omega)'), \ \ i=1,...N+1, \\
	&\pe \in L^\infty(0,T; L^2(\Omega)),  \ \ \dt p_{\epsilon} \in L^2(0,T;L^2(\Omega)).
	\end{split}
	\end{equation*}
\end{theorem}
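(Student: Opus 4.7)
The plan is to construct weak solutions by semidiscretization in time. Fix $\tau>0$ with $T/\tau\in\mathbb{N}$, replace $\dt$ by the backward difference quotient $\dtau$ in \eqref{aprdens}-\eqref{aprnav2}, and define piecewise-constant approximations $\uetau,\petau,\rho_\epsilon^{(\tau)}$ step by step. At each step, the stationary system decouples into a Navier--Stokes/compressibility block for $(u^n,p^n)$,
\begin{equation*}
\dtau u^n+(u^n\cdot\nabla)u^n+\tfrac12(\div u^n)u^n-\Delta u^n+\nabla p^n=f^n,\qquad \epsilon\,\dtau p^n+\div u^n=0,
\end{equation*}
which I would solve on $H^1_0(\Omega)\times L^2(\Omega)$ by a Galerkin scheme combined with a fixed-point argument (the standard incompressible projection is unavailable here, which is where the semidiscrete analysis acquires new features), and a density/Maxwell--Stefan block for $(\rho')^n$ with $u^n$ as given advection, solved by recasting \eqref{aprmax} in the entropy variables $\we$ of \eqref{defwi}. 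This yields a monotone, coercive elliptic system to which Browder's theorem applies in the spirit of \cite{JUST}, and the entropy formulation automatically preserves $\rho_i^n\geq 0$ and $\sum_i\rho_i^n=1$.

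Next come $\tau$-uniform a priori estimates. Testing the discrete momentum equation by $u^n$ exploits the skew-symmetry of $u\mapsto(u\cdot\nabla)u+\tfrac12(\div u)u$, which is exactly what the correction $\tfrac12(\div u)u$ in \eqref{aprnav1} is designed to provide, so the trilinear term drops out; pairing the pressure equation with $p^n$ and summing telescopically yields
\begin{equation*}
\norma{u^n}{L^2}^2+\epsilon\norma{p^n}{L^2}^2+2\sum_{k\le n}\tau\,\norma{\nabla u^k}{L^2}^2\leq C,
\end{equation*}
hence uniform bounds on $\uetau$ in $L^\infty(0,T;L^2)\cap L^2(0,T;H^1_0)$ and on $\petau$ in $L^\infty(0,T;L^2)$. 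Testing the density step by $\we^n$ and using the positive semidefiniteness of $A_0(\rho')$ produces the discrete entropy balance, and the correction $(\div u^n)(\rho')^n$ in \eqref{aprdens} is precisely what absorbs the would-be entropy production due to the loss of incompressibility, so the entropy stays controlled and $\rho_{\epsilon,i}^{(\tau)}$ is bounded in $L^2(0,T;H^1)$.

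Passing to the limit $\tau\to 0$ is the decisive part. For the densities Aubin--Lions applies: $\dtau\rho_{\epsilon,i}^{(\tau)}$ is bounded in $L^2(0,T;\widetilde{H}^2(\Omega)')$ directly from the weak form, yielding strong $L^2$ convergence. The main obstacle is time compactness for $\uetau$: since $\div\uetau\neq 0$ the projection onto $\mathcal{H}$ is not available and one cannot close an $L^2(0,T;\mathcal{V}')$ estimate on $\dtau\uetau$ as in the incompressible case. To avoid introducing a linear time interpolation (whose jump contributions scale badly in $\tau$), I would estimate the fractional time derivative of $\uetau$ directly in Fourier, proving
\begin{equation*}
\int_\mathbb{R} |s|^{2\gamma}\,\norma{\hat{\uetau}(s)}{L^2}^2\,ds\leq C_\epsilon,\qquad 0<\gamma<1/4,
\end{equation*}
by writing the difference quotient in its weak form, trading $\nabla\petau$ against $\dtau\petau$ via \eqref{aprnav2}, and handling the Fourier symbol of the piecewise-constant extension together with its boundary contributions by a duality argument; this is the hardest single step. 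Combined with the $L^2(0,T;H^1)$ bound, this places $\uetau$ uniformly in $H^\gamma(0,T;H^1_0(\Omega),L^2(\Omega))$ and gives strong compactness in $L^2(0,T;L^2)$, which suffices to pass to the limit in every nonlinear term. The limit then satisfies \eqref{aprdens}-\eqref{aprnav2} in the weak sense, strong convergence preserves $\rho_{\epsilon,i}\geq 0$ and $\sum_i\rho_{\epsilon,i}=1$, and $\dt\pe=-\epsilon^{-1}\div\ue\in L^2(0,T;L^2)$ together with $\dt\rho_{\epsilon,i}\in L^2(0,T;\widetilde{H}^2(\Omega)')$ follow directly from the equations.
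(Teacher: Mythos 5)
Your overall strategy coincides with the paper's proof: semidiscretization in time, a Galerkin scheme combined with a Brouwer-type fixed point (Lemma \ref{lemmabr}) for the $(\ue^k,\pe^k)$ block, entropy variables for the density block, the discrete energy estimate exploiting the skew-symmetry \eqref{bhat4} of $\hat b$, the entropy balance exploiting the correction $(\div\ue)\rho'_\epsilon$, compactness for the densities from the bound on the difference quotient in $L^2(0,T;\widetilde{H}^2(\Omega)')$ (note that what you call ``Aubin--Lions'' must be the Dreher--J\"ungel discrete version, Theorem \ref{extAubin}, since you only control the one-step quotient of a piecewise constant function), and, for the velocity, the direct Fourier estimate of the fractional time derivative: transforming the semidiscrete equations, pairing with $\widehat{\uetau}$ and $\widehat{\petau}$ so that the pressure terms combine into $\epsilon\norma{\widehat{\petau}}{L^2(\Omega)}^2$, then using $|\tau s|\leq|1-e^{-2\pi i\tau s}|$ for $|s|<1/(2\tau)$ and Paley--Wiener decay for $|s|\geq 1/(2\tau)$, exactly as in Lemma \ref{lemmaboundHgamma} (incidentally, the paper's bound there is uniform in $\epsilon$, not merely $C_\epsilon$, which is what later feeds Theorem \ref{thconv}).

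The genuine gap is in your existence step for the density block. The semidiscrete system in entropy variables is not coercive, and its quasilinear diffusion part is not monotone, so Browder's theorem does not apply as you state it: by Lemma \ref{lemmacoerc} the diffusion term only yields $\scalar{\nabla w}{B(w)\nabla w}\geq C_B\norma{\nabla\sqrt{x}}{L^2(\Omega)}^2$, which degenerates where densities vanish and controls no Sobolev norm of $w$; moreover $w\mapsto\rho'(w)$ is a bounded map (Lemma \ref{wtorho}), so $\scalar{\rho'(w)}{w}$ saturates as $\norma{w}{}\to\infty$. This is precisely why the paper (following \cite{CJ}, \cite{JUST}) adds the regularization $\lambda\big[\scalar{\Delta w_\epsilon^k}{\Delta q}+\scalar{w_\epsilon^k}{q}\big]$ to the semidiscrete equation \eqref{disc3}: it restores coercivity on $\widetilde{H}^2(\Omega)\hookrightarrow L^\infty(\Omega)$, so the frozen-coefficient problem is solvable by Lax--Milgram, and Leray--Schauder (Theorem \ref{lerayschauder}) gives the fixed point, with the required uniform bound supplied by the entropy inequality \eqref{winh2}, where Lemma \ref{lemmafondens} converts the convection plus correction terms into the controllable remainder $\scalar{\div\ue^k}{\ln x_{N+1}(\rho'(w))/M_{N+1}}$. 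The regularization is then removed in the limit, taking $(\lambda,\tau)\to0$ jointly and using $\sqrt{\lambda}\norma{w_\epsilon^{(\tau)}}{L^2(0,T;H^2(\Omega))}\leq C$ so that the $\lambda$-terms vanish in the weak formulation. A second, smaller omission: to start the iteration the initial densities must be lifted into $Y_{\alpha^0}$, i.e.\ made strictly positive as in \eqref{rhoeta0}, so that $w^0$ is well defined; since all estimates depend only on $h(\rho_0)<+\infty$ and not on $\alpha^0$, one sends $\alpha^0\to0$ at the very end.
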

The hypothesis on $h(\rho_0)$ means we are assuming finite entropy initial condition. The Theorem \ref{thexistence} will be proved in Section \ref{secproof1} via semidiscretization in time of the system \eqref{aprdens}-\eqref{aprnav2}. For the densities we will follow the idea of \textsc{Chen}, \textsc{Stelzer} and \textsc{J\"ungel}, \cite{CJ}, \cite{JUST} that is to introduce the entropy variables and prove the existence via fixed point and compactness arguments, without using a maximum principle. \\ \\ \indent
The convergence theorem that justify the approximation is stated as follows. 
\begin{theorem}[Convergence to the original system]
	\label{thconv}
	There exists a sequence $\{u_{\epsilon'}, p_{\epsilon'}, \rho_{\epsilon'}\}$ of weak solution to the problem  \eqref{aprdens}-\eqref{aprnav2}, given by Theorem \ref{thexistence}, such that for $\epsilon'\to 0$ we have that 
	\begin{align*}
	u_{\epsilon'}\to u \ \mathrm{in} \ &L^2(0,T;L^2(\Omega)) \ \mathrm{strongly}, \\
	& L^2(0,T;H^1_0(\Omega)) \ \mathrm{weakly}, \\   
	\nabla p_{\epsilon'}\rightharpoonup \nabla p \ \mathrm{in}  \ & L^2(0,T;H^{-1}(\Omega)) \ \mathrm{weakly},\\ 
	\rho_{\epsilon'} \to \rho \  \mathrm{in} \ & L^2(0,T;L^2(\Omega)) \ \mathrm{strongly},\\
	& L^2(0,T;H^1(\Omega)) \  \mathrm{weakly},
	\end{align*}
	where $(u,\rho)$ is some weak solution to \eqref{inceq1}-\eqref{inceq4}, and $p$ denotes the associated pressure to the Navier-Stokes equations.
\end{theorem}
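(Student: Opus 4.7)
The strategy is the standard artificial-compressibility scheme: obtain uniform-in-$\epsilon$ bounds, extract weak limits, upgrade to strong convergence where the nonlinearities demand it, and then pass to the limit in every term of the weak formulation. I would first exploit the uniform-in-$\epsilon$ bounds already produced in Theorem \ref{thexistence} (namely, an energy estimate on \eqref{aprnav1}--\eqref{aprnav2} tested against $\ue$, where the corrective term $\tfrac12(\div\ue)\ue$ kills the nonconservative part of the convective nonlinearity and the integration of $\nabla\pe\cdot\ue$ turns into $\tfrac{\epsilon}{2}\tfrac{d}{dt}\norma{\pe}{L^2}^2$, yielding $\ue$ bounded in $L^\infty(0,T;L^2)\cap L^2(0,T;H^1_0)$ and $\sqrt{\epsilon}\,\pe$ bounded in $L^\infty(0,T;L^2)$; together with the entropy estimate on \eqref{aprdens}--\eqref{aprmax}, where the added term $(\div\ue)\re'$ precisely cancels the contribution coming from the loss of incompressibility, giving a uniform bound on $\re$ in $L^2(0,T;H^1)$). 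From these, Banach--Alaoglu extracts a subsequence with $\ue\rightharpoonup u$ in $L^2(0,T;H^1_0)$ and weakly-$*$ in $L^\infty(0,T;L^2)$, $\re\rightharpoonup\rho$ in $L^2(0,T;H^1)$, and $\epsilon\pe\to 0$ strongly in $L^\infty(0,T;L^2)$. The incompressibility of the limit then follows from \eqref{aprnav2}: since $\div\ue=-\dt(\epsilon\pe)$ in $\mathcal{D}'$ and $\epsilon\pe\to 0$, one obtains $\div u=0$ in the sense of distributions.

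The principal obstacle is the \emph{strong} convergence $\ue\to u$ in $L^2(0,T;L^2(\Omega))$, which is indispensable to pass to the limit in the convective terms $\ue\cdot\nabla\ue$ (in \eqref{aprnav1}) and $\ue\cdot\nabla\re'$ (in \eqref{aprdens}). The usual Aubin--Lions argument does not apply directly: because of the pressure gradient $\nabla\pe$ in \eqref{aprnav1} one does not obtain a clean estimate for $\dt\ue$ in $L^2(0,T;V')$ as in the incompressible case, and projecting onto divergence-free fields is blocked by $\div\ue\neq 0$. This is precisely the reason why Theorem \ref{thexistence} supplies the fractional-in-time bound $\ue\in H^\gamma\bigl(0,T;H^1_0(\Omega),L^2(\Omega)\bigr)$ for every $\gamma<1/4$. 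I would then invoke the compactness lemma for fractional time Sobolev spaces (in the spirit of Lions--Temam, compatible with the Fourier-based norm in \eqref{spaces}): the uniform $L^2(0,T;H^1_0)$-bound combined with the compact embedding $H^1_0(\Omega)\hookrightarrow\hookrightarrow L^2(\Omega)$ and the uniform fractional time regularity yields relative compactness of $\{\ue\}$ in $L^2(0,T;L^2(\Omega))$.

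For the densities the classical Aubin--Lions lemma does apply: the uniform bound $\re\in L^2(0,T;H^1)$, together with $\dt\re'\in L^2(0,T;\widetilde H^2(\Omega)')$ inherited from \eqref{aprdens} (here one uses that the extra term $(\div\ue)\re'$ is controlled in $L^1(L^{3/2})$ in dimension $\leq 3$ thanks to the $L^2 H^1$--$L^\infty L^2$ interpolation of $\ue$), delivers $\re\to\rho$ strongly in $L^2(0,T;L^2(\Omega))$. Once both strong convergences are in hand, the remaining passage to the limit is routine. The convective terms close by the usual weak$\times$strong argument; the two corrective terms $(\div\ue)\ue$ and $(\div\ue)\re'$ vanish because $\div\ue=-\epsilon\dt\pe$ goes to zero in $\mathcal{D}'$ while the other factors remain uniformly bounded in the relevant Lebesgue spaces and converge strongly; the Maxwell--Stefan relation \eqref{aprmax} closes using strong convergence of $\re'$ (hence of $A_0(\re')$ and of $\nabla x_\epsilon'$) paired with the weak convergence of $J'_\epsilon$, which is itself bounded in $L^2$ by the uniform $L^2(H^1)$-bound on $\rho_\epsilon$ and the structure of \eqref{aprmax}. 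Finally, the pressure convergence $\nabla\pe\rightharpoonup\nabla p$ in $L^2(0,T;H^{-1})$ is recovered by isolating $\nabla\pe$ in \eqref{aprnav1} (the right-hand side is $\epsilon$-uniformly bounded in $L^2(0,T;H^{-1})$) and identifying the limit, via de Rham's theorem applied after testing against divergence-free test functions, with the distributional pressure associated to the limiting Navier--Stokes system.
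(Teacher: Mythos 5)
Your proposal is correct and follows essentially the same route as the paper: uniform-in-$\epsilon$ bounds inherited from the existence proof, strong $L^2(0,T;L^2(\Omega))$ compactness of the velocity via the fractional $H^\gamma$ bound and Theorem \ref{thHgamma}, strong compactness of the densities via Aubin--Lions with the $\dt\re'$ bound in $\widetilde{H}^2(\Omega)'$, incompressibility of the limit from the pressure equation, weak$\times$strong passage to the limit in the nonlinear terms, and recovery of $\nabla p$ by isolating it in the momentum equation. The only cosmetic differences are that the paper deduces the disappearance of the corrective terms from $\div u=0$ in the limiting weak formulation (rather than from $\div\ue\to 0$ distributionally paired with strong convergence, which is equivalent), and it identifies the limit pressure by direct comparison with \eqref{inceq3} instead of explicitly invoking de Rham.
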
 

The Theorem \ref{thconv} will be a consequence of some uniform in $\epsilon$ estimates that we obtain while proving the existence of the approximating solutions, and we prove it Section \ref{secproof2}. \\ \indent 
Before proving these results, in the next section we recover some mathematical properties that we need in the sequel. 
\section{Mathematical Tools}
\label{mathtools}
We divide the discussion into two subsection, we begin by recalling some general mathematical result and then we recover important mathematical properties specific of the model under consideration.
\subsection{General mathematical result} 
First of all we define an operator and state some property of it, for the detailed discussion we refer to \cite[Chapter III, \S 8]{TE}
\begin{definition} 
	\label{bhat}
	Let $\Omega\subset\mathbb{R}^d$ a bounded domain. Given $u\in H^1_0(\Omega)$ and $v, w \in H^1(\Omega)$, we define the following trilinear form:
	\begin{equation}
	\label{defbhat}
	\hat{b}(u,v,w)=\iO (u \cdot \nabla v) \cdot w \ dz+ \frac{1}{2}\iO(\div u) v\cdot w \ dz.
	\end{equation}
\end{definition}
Then the operator $\hat{b}$ has the following properties.
\begin{lemma}
	\label{propbhat}
	Let $\Omega \subset \mathbb{R}^d$ a bounded domain. Given $u\in H^1_0(\Omega)$ and $v, \ w \in H^1(\Omega)$, then we have the following properties:
	\begin{align}
	\label{bhat1}
	\hat{b}(u,v,w)&=\frac{1}{2}\bigg(\iO (u \cdot \nabla v) \cdot w \ dz-\iO (u \cdot \nabla w) \cdot v \ dz\bigg), \\
	\label{bhat2}
	\hat{b}(u,v,w)&\leq C(\Omega)\lVert u\rVert_{H^1_0(\Omega)} \lVert v \rVert_{H^1(\Omega)} \lVert w \rVert_{H^1(\Omega)}, \\
	\label{bhat3}
	\hat{b}(u,v,w)&=-\hat{b}(u,w,v),\\
	\label{bhat4}
	\hat{b}(u,v,v)&=0.
	\end{align}
\end{lemma}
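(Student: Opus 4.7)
The plan is to establish property \eqref{bhat1} first by integration by parts, and then derive the remaining three properties as direct consequences (with \eqref{bhat2} requiring a separate Sobolev-type estimate).

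For \eqref{bhat1}, the idea is to rewrite the $\frac{1}{2}\int_\Omega (\div u)\,v\cdot w\,dz$ correction term using integration by parts. Since $u\in H_0^1(\Omega)$ the boundary contributions vanish, and an index-level computation gives
\[
\int_\Omega (u\cdot\nabla v)\cdot w\,dz + \int_\Omega (u\cdot\nabla w)\cdot v\,dz = -\int_\Omega (\div u)\,v\cdot w\,dz.
\]
Substituting this identity into Definition \ref{bhat} cancels the correction term and yields exactly the antisymmetric expression in \eqref{bhat1}. Properties \eqref{bhat3} and \eqref{bhat4} are then immediate: \eqref{bhat3} follows from \eqref{bhat1} by exchanging the roles of $v$ and $w$, while \eqref{bhat4} is obtained by setting $w=v$ in \eqref{bhat3}, which forces $\hat b(u,v,v)=-\hat b(u,v,v)=0$.

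For the continuity estimate \eqref{bhat2}, I would bound each of the two summands in \eqref{defbhat} using H\"older's inequality together with the Sobolev embeddings valid in dimension $d\le 3$ (so that $H^1(\Omega)\hookrightarrow L^4(\Omega)$, and $H^1(\Omega)\hookrightarrow L^p(\Omega)$ for suitable $p$). Concretely, for the convective part one writes
\[
\left|\int_\Omega (u\cdot\nabla v)\cdot w\,dz\right|\le \lVert u\rVert_{L^4}\,\lVert\nabla v\rVert_{L^2}\,\lVert w\rVert_{L^4}\le C(\Omega)\,\lVert u\rVert_{H^1_0}\,\lVert v\rVert_{H^1}\,\lVert w\rVert_{H^1},
\]
and for the correction term one uses $\lVert\div u\rVert_{L^2}\le \lVert u\rVert_{H^1_0}$ together with $\lVert v\,w\rVert_{L^2}\le \lVert v\rVert_{L^4}\lVert w\rVert_{L^4}\le C(\Omega)\lVert v\rVert_{H^1}\lVert w\rVert_{H^1}$. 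Summing the two contributions gives \eqref{bhat2}.

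The only step requiring any care is the integration by parts identity at the heart of \eqref{bhat1}: one must verify that $u\in H_0^1(\Omega)$ provides enough regularity for the boundary integral to vanish (which it does by the usual density argument, approximating $u$ by $C_c^\infty$ functions). Everything else is then routine: \eqref{bhat3} and \eqref{bhat4} are algebraic consequences, and \eqref{bhat2} is a standard application of H\"older plus Sobolev embedding in low dimension. The main (mild) obstacle is thus ensuring the correct integration-by-parts justification; once that is in place the rest of the lemma follows with no further difficulty.
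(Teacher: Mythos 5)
Your proof is correct and is essentially the canonical argument: the paper itself does not prove Lemma \ref{propbhat} but defers to \cite[Chapter III, \S 8]{TE}, where exactly this computation is carried out --- integration by parts (justified by density of $C^\infty_0$ in $H^1_0$) to obtain the antisymmetrized form \eqref{bhat1}, from which \eqref{bhat3} and \eqref{bhat4} follow algebraically, and H\"older combined with the embedding $H^1(\Omega)\hookrightarrow L^4(\Omega)$ (valid in the paper's setting $d\leq 3$) for the continuity bound \eqref{bhat2}. There are no gaps.
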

We recover also a consequence of the Brouwer's fixed point theorem.
\begin{lemma}\label{lemmabr}
	Let $X$ be a finite dimensional Hilbert space, with scalar product $\scalar{\cdot}{\cdot}$ and norm $\norma{\cdot}{}$. Let $P:X\to X$ a continuous map such that 
	\begin{equation*}
	\scalar{P(x)}{x}>0 \ \text{for} \ \norma{x}{}=k>0.
	\end{equation*}
	Then there exist $\bar{x}\in X$ with $\norma{\bar{x}}{}< k$ such that 
	\begin{equation*}
	P(\bar{x})=0.
	\end{equation*}
\end{lemma}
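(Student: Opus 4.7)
The plan is to argue by contradiction using Brouwer's fixed-point theorem on the closed ball $\overline{B_k}=\Set{x\in X:\norma{x}{}\le k}$; since $X$ is finite-dimensional, this ball is homeomorphic to the standard closed unit ball in $\mathbb{R}^{\dim X}$, which is precisely the setting in which Brouwer applies.

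First I would assume toward a contradiction that $P$ has no zero in the open ball $\Set{x\in X:\norma{x}{}<k}$. Combined with the hypothesis $\scalar{P(x)}{x}>0$ on the sphere $\norma{x}{}=k$, which by itself already forces $P(x)\ne 0$ there, this yields $P(x)\ne 0$ throughout $\overline{B_k}$. Then I would introduce the continuous map
\[
S:\overline{B_k}\to\overline{B_k},\qquad S(x)=-k\,\frac{P(x)}{\norma{P(x)}{}},
\]
which takes values on the boundary sphere by construction, hence in $\overline{B_k}$. Brouwer's theorem then supplies a fixed point $\bar{x}=S(\bar{x})$, necessarily satisfying $\norma{\bar{x}}{}=k$. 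Testing the defining equation against $\bar{x}$ in the inner product gives
\[
\scalar{P(\bar{x})}{\bar{x}}=\scalar{P(\bar{x})}{-k\,P(\bar{x})/\norma{P(\bar{x})}{}}=-k\,\norma{P(\bar{x})}{}<0,
\]
which directly contradicts the assumed sign of $\scalar{P(\cdot)}{\cdot}$ on the sphere. Hence the initial assumption fails and there exists $\bar{x}$ with $\norma{\bar{x}}{}<k$ and $P(\bar{x})=0$.

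I do not expect a genuine obstacle here: the argument is the standard reduction of a zero-finding problem to a fixed-point problem via radial normalization. The only delicate point is the continuity of $S$, which is immediate on the set where $P$ does not vanish. The finite-dimensionality of $X$ enters only to apply Brouwer's theorem in its classical form; no compactness or approximation beyond that is required.
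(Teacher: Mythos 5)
Your proof is correct. The paper does not actually prove this lemma---it is stated without proof as a known consequence of Brouwer's fixed point theorem (the classical reference being T\'{e}mam's book on Navier--Stokes equations)---and your argument, assuming $P$ has no zero in the open ball, defining $S(x)=-k\,P(x)/\lVert P(x)\rVert$ on the closed ball, and deriving the sign contradiction $\scalar{P(\bar{x})}{\bar{x}}=-k\lVert P(\bar{x})\rVert<0$ at the Brouwer fixed point $\bar{x}$ (which necessarily lies on the sphere), is precisely the standard proof that the paper implicitly invokes.
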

This Lemma will be useful in the proof of existence for the Navier-Stokes part of our system.\\ \indent
The compactness in time for the velocity is proved by the Fourier transform. In particular we will need to control the behaviour at infinity of the Fourier transform of a compactly supported function. In general we have the Paley-Wiener Theorem, but here we state and prove only the particular case that we will use.
\begin{lemma}
	\label{paleywiener}
	Consider $\varphi\in C^\infty_0(\mathbb{R})$ and assume that $\mathrm{supp}\ \varphi \subset [-R,R]$. Then for any $N>0$ it holds that 
	\begin{equation*}
	|\widehat{\varphi}(s)|\leq CR(1+|s|)^{-N}. 
	\end{equation*}
\end{lemma}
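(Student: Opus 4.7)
The plan is a direct integration-by-parts argument, combined with a trivial bound for small frequencies. Throughout we let $C$ denote a constant that may depend on $N$ and on derivatives of $\varphi$ but not on $s$ or on the support parameter $R$.

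First I would treat the regime $|s|\le 1$ by a crude estimate: since $\mathrm{supp}\,\varphi\subset[-R,R]$,
\begin{equation*}
|\widehat{\varphi}(s)|\;\le\;\int_{-R}^{R}|\varphi(t)|\,dt\;\le\;2R\,\|\varphi\|_{L^\infty(\mathbb{R})},
\end{equation*}
which gives $|\widehat{\varphi}(s)|\le C R\le C R (1+|s|)^{-N}\cdot 2^N$ uniformly for $|s|\le 1$.

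Next I would handle the decay at high frequencies. For $|s|\ge 1$, I use the identity
\begin{equation*}
e^{-2\pi i s t}=\frac{1}{(-2\pi i s)}\,\frac{d}{dt}e^{-2\pi i s t},
\end{equation*}
integrate by parts $N$ times inside $\widehat{\varphi}(s)=\int_{-R}^{R}e^{-2\pi i s t}\varphi(t)\,dt$, and note that all boundary terms vanish because $\varphi$ and all its derivatives are zero at $\pm R$ (being compactly supported inside $[-R,R]$). This produces
\begin{equation*}
\widehat{\varphi}(s)=\frac{1}{(2\pi i s)^{N}}\int_{-R}^{R}e^{-2\pi i s t}\varphi^{(N)}(t)\,dt,
\end{equation*}
and hence
\begin{equation*}
|\widehat{\varphi}(s)|\;\le\;\frac{2R\,\|\varphi^{(N)}\|_{L^\infty(\mathbb{R})}}{(2\pi|s|)^{N}}\;\le\;C R\,|s|^{-N}.
\end{equation*}
For $|s|\ge 1$ we have $(1+|s|)\le 2|s|$, so $|s|^{-N}\le 2^{N}(1+|s|)^{-N}$, yielding $|\widehat{\varphi}(s)|\le C R(1+|s|)^{-N}$.

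Combining the two regimes and absorbing numerical factors into $C$ produces the desired inequality for all $s\in\mathbb{R}$. There is no real obstacle: the only thing to be a bit careful about is that the boundary contributions from integration by parts vanish, which is immediate once one recalls that $\varphi\in C^\infty_0(\mathbb{R})$ with support strictly inside $[-R,R]$ implies $\varphi^{(k)}(\pm R)=0$ for all $k\ge 0$.
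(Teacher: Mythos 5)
Your proof is correct and takes essentially the same approach as the paper: integrate by parts $N$ times (equivalently, let the $t$-derivatives of $e^{-2\pi i s t}$ fall on $\varphi$), use compact support to kill boundary terms, and bound the resulting integral by $CR$. In fact your explicit split into $|s|\le 1$ (trivial bound) and $|s|\ge 1$ (decay bound) is slightly more careful than the paper, whose single-line passage from $|s|^{-N}$ to $(1+|s|)^{-N}$ silently needs exactly this low-frequency case.
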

\begin{proof}
	By definition of the Fourier transform, we have that for any $N>0$, 
	\begin{equation*}
	(-2\pi is)^N\widehat{\varphi}(s)=\int_{-R}^{R}\frac{d^N}{dt^N}(e^{-2\pi i st})\varphi(t)dt=(-1)^N\int_{-R}^{R}e^{-2\pi i st}\varphi^{(N)}(t)dt,
	\end{equation*}
	where with $\varphi^{(N)}$ we denote de $N$-$th$ derivative. So we have that 
	\begin{equation*}
	|\widehat{\varphi}(s)|\leq C(1+|s|)^{-N}\int_{-R}^{R}|\varphi^{(N)}(t)|dt\leq CR(1+|s|)^{-N},
	\end{equation*}
	where the last inequality is true since $\varphi^{(N)}\in C^\infty_0(\mathbb{R})$.
\end{proof}
Next we state here the version of Leray-Schauder fixed point Theorem that we use for the existence of the densities.
\begin{theorem}[Leray-Schauder fixed point theorem]
	\label{lerayschauder}
	Let $X$ be a Banach space and $T:X\times[0,1]\to X$ a compact map such that
	\begin{itemize}
		\item $T(x,0)=0$ for each $x\in X$,
		\item there exists a constant $M>0$ such that for each pair $(x,\gamma)\in X\times [0,1]$ which satisfies $x=T(x,\gamma)$, we have
		\begin{equation*}
		\lVert x \rVert <M.
		\end{equation*}
	\end{itemize}
	Then $x$ is a fixed point of the map $T_1:X\to X $ given by $T_1y=T(y,1), \ y\in X$.
\end{theorem}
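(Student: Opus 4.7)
The plan is to deduce the theorem from the Leray--Schauder topological degree for compact perturbations of the identity, using the continuation (homotopy invariance) principle. Introduce the auxiliary map $F : X \times [0,1] \to X$ by $F(x,\gamma) := x - T(x,\gamma)$, so that fixed points of $T(\cdot,\gamma)$ correspond exactly to zeros of $F(\cdot,\gamma)$. Since $T$ is compact as a map on $X \times [0,1]$, each slice $F(\cdot,\gamma) = \mathrm{id} - T(\cdot,\gamma)$ is an identity-minus-compact operator and the full family $\{F(\cdot,\gamma)\}_{\gamma \in [0,1]}$ forms an admissible compact homotopy in the sense needed to define the Leray--Schauder degree.

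The first step is to use the a priori bound to localise the problem on the open ball $B_M := \{x \in X : \|x\| < M\}$. The hypothesis asserts that every solution $(x,\gamma)$ of $x = T(x,\gamma)$ satisfies $\|x\| < M$; equivalently, $F(x,\gamma) \neq 0$ whenever $\|x\| = M$ and $\gamma \in [0,1]$. This is exactly the non-degeneracy condition needed to ensure that $\deg(F(\cdot,\gamma), B_M, 0)$ is well-defined for every $\gamma$.

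The second step is homotopy invariance. Because $F$ is an admissible compact homotopy that never vanishes on the lateral boundary $\partial B_M \times [0,1]$, the integer $\gamma \mapsto \deg(F(\cdot,\gamma), B_M, 0)$ is constant on $[0,1]$. At the endpoint $\gamma = 0$ the assumption $T(x,0) = 0$ reduces $F(\cdot,0)$ to the identity, and the normalisation property gives $\deg(\mathrm{id}, B_M, 0) = 1$. Consequently $\deg(F(\cdot,1), B_M, 0) = 1 \neq 0$, and the solution existence property of the Leray--Schauder degree produces $\bar x \in B_M$ with $F(\bar x,1) = 0$, i.e.\ $\bar x = T_1 \bar x$, as required.

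The only genuine obstacle is verifying the prerequisites for invoking the Leray--Schauder degree machinery, namely continuity and compactness of $T$ on $\overline{B_M} \times [0,1]$ together with the non-vanishing of $F$ on $\partial B_M \times [0,1]$. All of these follow immediately from the hypotheses of the theorem, so once the degree-theoretic framework is available the proof collapses to a one-line continuation from $\gamma = 0$ to $\gamma = 1$. If one prefers to avoid degree theory, the same conclusion can be obtained by combining Schauder's fixed point theorem with a radial retraction onto $\overline{B_M}$ and using the a priori bound to rule out fixed points of the retracted map on $\partial B_M$, but the degree argument is considerably cleaner.
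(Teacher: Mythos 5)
The paper does not prove this statement at all: Theorem \ref{lerayschauder} is recalled as a classical tool (with the relevant literature, e.g.\ the Leray--Schauder lecture notes cited in the bibliography) and is used as a black box in the existence proof for the densities, so there is no in-paper argument to compare against. Judged on its own merits, your degree-theoretic proof is the standard one and is correct: the a priori bound is exactly the statement that $F(x,\gamma)=x-T(x,\gamma)$ does not vanish on $\partial B_M\times[0,1]$, compactness of $T$ makes the family an admissible homotopy of compact perturbations of the identity, homotopy invariance transports the normalisation $\deg(\mathrm{id},B_M,0)=1$ (valid since $0\in B_M$ because $M>0$) from $\gamma=0$, where $T(\cdot,0)=0$ forces $F(\cdot,0)=\mathrm{id}$, to $\gamma=1$, and the existence property of the degree yields a fixed point of $T_1$ in $B_M$. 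One caveat on your closing aside: the ``Schauder plus radial retraction'' shortcut works as stated for the Schaefer-type homotopy $T(x,\gamma)=\gamma T(x)$, but for a general compact homotopy $T(x,\gamma)$ one must additionally weave the parameter $\gamma$ into the construction of the auxiliary map (e.g.\ making $\gamma$ a function of $\lVert x\rVert$), so that alternative is not as immediate as your last sentence suggests; the degree argument you actually carried out is the reliable route.
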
 
\bigskip
Now we recall also a classical compactness theorem involving fractional derivatives.
\begin{theorem}
	\label{thHgamma}
	Let $X, \ B$ and $Y$ be Banach spaces such that the embedding $X\hookrightarrow B$ is compact and the embedding $B\hookrightarrow Y$ is continuous. Let $K\subset\mathbb{R}$ be a compact set.\\
	Then for any $\gamma>0$, $H^\gamma(K;X,Y)$ is compactly embedded in $L^2(K;B)$. 
\end{theorem}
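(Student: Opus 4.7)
The approach is a Simon-type compactness argument in the Banach-space-valued setting: Ehrling's lemma reduces the problem to relative compactness in $L^2(K;Y)$, and the fractional time regularity contained in the $H^\gamma$ norm is exploited through the Fourier transform to produce equicontinuity of translations in $Y$.

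Let $\{u_n\}$ be a bounded sequence in $H^\gamma(K;X,Y)$; we want to extract a subsequence convergent in $L^2(K;B)$. Since $X \hookrightarrow B$ is compact and $B \hookrightarrow Y$ continuous, Ehrling's lemma yields, for every $\eta > 0$, a constant $C_\eta > 0$ with $\|v\|_B \leq \eta \|v\|_X + C_\eta \|v\|_Y$ for all $v \in X$. Applied to $u_n - u_m$ and integrated in time,
\begin{equation*}
\|u_n - u_m\|_{L^2(K;B)}^2 \leq 2\eta^2 \|u_n - u_m\|_{L^2(K;X)}^2 + 2 C_\eta^2 \|u_n - u_m\|_{L^2(K;Y)}^2;
\end{equation*}
since the first term is uniformly bounded, it suffices to extract a subsequence Cauchy in $L^2(K;Y)$ and then let $\eta \to 0$.

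To produce such a subsequence I would prove translation-equicontinuity in $L^2(\mathbb{R};Y)$. After extending each $u_n$ by zero outside $K$, Plancherel in time gives, with $\tau_h u(t) := u(t+h)$,
\begin{equation*}
\|\tau_h u_n - u_n\|_{L^2(\mathbb{R};Y)}^2 = \int_{\mathbb{R}} |e^{-2\pi i h s} - 1|^2 \, \|\hat u_n(s)\|_Y^2 \, ds.
\end{equation*}
Splitting the integral at $|s| = M$, the high-frequency part is estimated by $4 M^{-2\gamma} \||s|^\gamma \hat u_n\|_{L^2(\mathbb{R};Y)}^2$, using $|e^{-2\pi i h s} - 1| \leq 2$ and $1 \leq M^{-2\gamma} |s|^{2\gamma}$; the low-frequency part is estimated by $(2\pi h M)^2 \|u_n\|_{L^2(\mathbb{R};Y)}^2$, using $|e^{-2\pi i h s} - 1| \leq 2\pi |s| h$. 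Choosing $M$ large and then $h$ small makes both pieces small uniformly in $n$, which is the required equicontinuity.

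Uniform boundedness, common compact support, and translation-equicontinuity in $L^2(\mathbb{R};Y)$ — together with the compactness of $X \hookrightarrow B$ — yield relative compactness of $\{u_n\}$ in $L^2(K;Y)$ by the Banach-space-valued Kolmogorov-Riesz-Fréchet / Simon compactness criterion; inserting the resulting Cauchy subsequence into the Ehrling estimate and sending $\eta \to 0$ closes the argument. The main technical obstacle is exactly this last invocation: a vector-valued Kolmogorov-Riesz theorem also requires pointwise precompactness of $\{u_n(t)\}$ in $B$, and this is where the compactness (not mere continuity) of $X \hookrightarrow B$ is essential, typically recovered by a mollification argument that maps the $L^2(K;X)$-bounded sequence pointwise into a bounded — hence relatively compact — subset of $B$.
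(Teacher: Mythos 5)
Your argument contains one step that genuinely fails at the stated level of generality: the Plancherel identity
\begin{equation*}
\lVert \tau_h u_n - u_n\rVert_{L^2(\mathbb{R};Y)}^2=\int_{\mathbb{R}}\lvert e^{-2\pi i h s}-1\rvert^2\,\lVert \hat u_n(s)\rVert_Y^2\,ds ,
\end{equation*}
which is what converts the frequency-side information of the $H^\gamma$ norm back into physical-space translation estimates, is valid when $Y$ is a Hilbert space but is false for a general Banach space $Y$: the Fourier transform is bounded on $L^2(\mathbb{R};Y)$ only when $Y$ is isomorphic to a Hilbert space (Kwapie\'n), and the same objection applies to the identity $\lVert\hat u_n\rVert_{L^2(\mathbb{R};Y)}=\lVert u_n\rVert_{L^2(\mathbb{R};Y)}$ invoked for the low frequencies (that particular bound is repairable, since $\lVert\hat u_n(s)\rVert_Y\le\lVert u_n\rVert_{L^1(\mathbb{R};Y)}$, but the displayed identity is not). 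Theorem \ref{thHgamma} is stated for Banach spaces $X,B,Y$, so strictly speaking your proof establishes only its Hilbert-space version. In fairness, this defect is largely inherited from the statement itself: the definition \eqref{spaces} of $H^\gamma$ through an $L^2$ Fourier norm is intrinsically a Hilbert-space notion, one of the two sources the paper cites for the proof (\cite[Ch.~III, \S 2, Theorem 2.2]{TE}) assumes Hilbert spaces, and in the only place the paper uses the theorem one has $X=H^1_0(\Omega)$ and $B=Y=L^2(\Omega)$. The genuinely Banach-space result is \cite[Corollary 5]{SI}, where the fractional regularity is measured by translation (Nikolskii-type) seminorms rather than by the Fourier transform; to prove the theorem exactly as stated you would have to argue along those lines, or else add the hypothesis that $Y$ is a Hilbert space.

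Concerning the route itself: the paper gives no proof (it refers to Simon and Temam), so the comparison must be with those. Your argument follows Temam's Fourier route in its first two steps -- Ehrling's lemma to reduce to compactness in $L^2(K;Y)$, then the frequency splitting at $\lvert s\rvert=M$ -- but diverges at the end. Temam extracts a weakly convergent subsequence $u_n\rightharpoonup u$ in $L^2(K;X)$, observes that for each fixed $s$ the vector $\hat u_n(s)=\int_K u_n(t)e^{-2\pi i st}\,dt$ then converges weakly in $X$, hence strongly in $Y$ by the compact embedding, and concludes on $\{\lvert s\rvert\le M\}$ by dominated convergence; no translation estimates and no Kolmogorov--Riesz theorem are needed. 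Your ending -- translation equicontinuity plus a vector-valued Kolmogorov--Riesz criterion -- can be made to work, and you correctly identify the missing hypothesis (pointwise precompactness) and the correct repair (mollification: $u_n\ast\varphi_\delta(t)$ lies in a bounded subset of $X$, hence a precompact subset of $B$). As written, however, this is an appeal to a black box whose hypotheses you concede are not met. I would make it self-contained by noting that the mollification renders Kolmogorov--Riesz unnecessary: for fixed $\delta$ the family $\{u_n\ast\varphi_\delta\}_n$ is bounded in $X$ pointwise and equicontinuous in the $X$-norm, hence precompact in $C(K;B)$ by Arzel\`a--Ascoli, while $\lVert u_n\ast\varphi_\delta-u_n\rVert_{L^2(\mathbb{R};Y)}\le\sup_{\lvert h\rvert\le\delta}\lVert\tau_h u_n-u_n\rVert_{L^2(\mathbb{R};Y)}$ is small uniformly in $n$ by your equicontinuity estimate; a diagonal argument then yields a subsequence Cauchy in $L^2(K;Y)$, and your Ehrling step finishes the proof.
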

For details see \cite[Corollary 5]{SI}, or \cite[Ch.III, \S 2, Theorem 2.2]{TE}.\\\\ \indent
Finally we present a version of the Aubin-Lion's Lemma given by \textsc{M. Dreher} and\textsc{ A. J\"ungel }in \cite[Theorem 1]{DRJ}, that is very useful for the compactness in time for the densities.
The notation will be the following, let $T>0$, $N\in \mathbb{N}$, $\tau= T/N$, and set $t_k=k\tau$, $k=0,\dots,N$. Furthermore, let
\begin{equation*}
(S_hu)(x,t)=u(x,t-h), \ \ \ t\geq h >0,
\end{equation*}
be the shift operator.
\begin{theorem}
	\label{extAubin}
	Let $X, \ B$ and $Y$ be Banach spaces such that the embedding $X\hookrightarrow B$ is compact and the embedding $B\hookrightarrow Y$ is continuous. Furthermore, let either $1\leq p <\infty$, $r=1$ or $p=\infty$, $r>1$, and let $\{v_\tau\}$ be a sequence of functions, which are constant on each subinterval $(t_{k-1},t_k)$, satisfying 
	\begin{equation}
	\frac{1}{\tau}\norma{v_\tau-S_\tau v_\tau}{L^r(\tau,T; Y)}+\norma{v_\tau}{L^p(0,T; X)}\leq C_0 \ \ \ \text{for  all} \ \tau>0,
	\end{equation}
	where $C_0>0$ is a constant which is independent of $\tau$. If $p<\infty$, then $\{v_\tau\}$ is relatively compact in $L^p(0,T;B)$. \\
	If $p=\infty$, there exists a subsequence of $\{v_\tau\}$ which converges in each space $L^q(0,T;B)$, $1\leq q <\infty$, to a limit which belongs to $C^0([0,T];B)$.
\end{theorem}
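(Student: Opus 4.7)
The plan is to reduce the theorem to a suitable discrete analogue of the Kolmogorov–M.~Riesz compactness criterion for $L^p(0,T;B)$. Recall that a bounded family in $L^p(0,T;B)$ is relatively compact provided that (i) the local time averages belong pointwise to a relatively compact subset of $B$, and (ii) the family is equi-continuous with respect to time translations in the $L^p(B)$ norm. The uniform $L^p(0,T;X)$-bound together with the compact embedding $X \hookrightarrow B$ yields (i) by a standard averaging argument, so the whole task reduces to proving
$$\sup_{\tau > 0} \|v_\tau - S_h v_\tau\|_{L^p(h,T;B)} \longrightarrow 0 \qquad \text{as } h \to 0^+.$$

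The main device for passing from $B$ to a norm controlled by the hypotheses is Ehrling's inequality: the compact embedding $X \hookrightarrow B$ together with the continuous embedding $B \hookrightarrow Y$ implies that for every $\eta > 0$ there exists $C_\eta > 0$ such that $\|w\|_B \le \eta\|w\|_X + C_\eta \|w\|_Y$ for all $w \in X$. Applied to $w = v_\tau(t)-v_\tau(t-h)$ and integrated in $t$, the $X$-part contributes at most $2\eta C_0$, so the problem reduces to showing
$$\|v_\tau - S_h v_\tau\|_{L^p(h,T;Y)} \longrightarrow 0 \qquad \text{as } h \to 0^+, \text{ uniformly in } \tau,$$
using only the assumption $\tau^{-1}\|v_\tau - S_\tau v_\tau\|_{L^r(\tau,T;Y)} \le C_0$.

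To establish this $Y$-shift estimate I would split cases according to whether $h \ge \tau$ or $h < \tau$. For $h \ge \tau$, write $h = m\tau + s$ with $0 \le s < \tau$ and telescope
$$v_\tau(t)-v_\tau(t-h) = \sum_{j=0}^{m-1}\bigl(v_\tau(t-j\tau)-v_\tau(t-(j+1)\tau)\bigr) + \bigl(v_\tau(t-m\tau)-v_\tau(t-h)\bigr),$$
then apply Minkowski's inequality and the hypothesis to bound the resulting quantity by something of order $h\, C_0$ when $r=1$, $p<\infty$, or $h^{1/r'}C_0$ when $r>1$, $p=\infty$, with $r'$ the conjugate exponent. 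For $h<\tau$, use the piecewise-constant ansatz directly: on each discretization cell $(t_{k-1},t_k)$ the shifted difference is nonzero only on a sub-interval of length $h$, where it equals the single jump $v_\tau^{k}-v_\tau^{k-1}$, so its $L^r$-norm over that cell is exactly a factor $(h/\tau)^{1/r}$ times the contribution to $\|v_\tau-S_\tau v_\tau\|_{L^r(\tau,T;Y)}$. Summing over $k$ and invoking the hypothesis again gives uniform vanishing as $h\to 0^+$.

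With (i) and (ii) in hand, Kolmogorov–M.~Riesz produces relative compactness in $L^p(0,T;B)$ whenever $p<\infty$. For $p=\infty$, extract a subsequence converging in every $L^q(0,T;B)$ with $q<\infty$, and use the uniform $B$-valued time-shift modulus from (ii) together with Arzelà–Ascoli applied to the running averages to upgrade the limit to $C^0([0,T];B)$. The main technical obstacle, as usual for discrete Aubin–Lions results, is the sub-grid case $h<\tau$: telescoping is unavailable and one must work directly with the piecewise-constant-in-time structure. The slightly awkward parameter dichotomy $(1\le p<\infty,\,r=1)$ versus $(p=\infty,\,r>1)$ in the statement is dictated precisely by what this sub-grid analysis can and cannot close.
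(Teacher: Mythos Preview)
The paper does not prove this theorem; it is stated without proof in the preliminaries and attributed to Dreher and J\"ungel \cite{DRJ}, where the full argument appears. Your proposal is therefore not competing with anything in the present paper---it is effectively a sketch of the proof from the cited reference. Your outline (Ehrling's lemma to interpolate between $X$ and $Y$, a uniform time-shift estimate in $Y$ split into the cases $h\ge\tau$ and $h<\tau$, and then the Simon/Kolmogorov--Riesz criterion) is indeed the strategy used by Dreher and J\"ungel, and the observation that the sub-grid case $h<\tau$ is handled directly via the piecewise-constant structure---each jump contributing on a subinterval of length $h$, giving the factor $(h/\tau)^{1/r}$---is precisely the heart of their argument and the reason the usual linear-interpolant step can be avoided.
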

This theorem is an extension of the Aubin-Lion's particularly useful for proof involving discretization in time, because we do not need estimates on the shifting uniform in $\tau$. It is enough to consider $S_\tau u_\tau$ and not $S_h u_\tau$ as in the Aubin-Lion's Lemma. In particular Theorem \ref{extAubin} avoids to construct explicitly a linear interpolation of $u_\tau$ necessary for the classical Aubin-Lion's Lemma.
\subsection{Mathematical properties of the model}
We recover some useful properties of the system \eqref{aprdens}-\eqref{aprnav2}, for the proofs and a detailed discussion we refer to \cite{CJ} and \cite{JUST}.
\begin{lemma}
	\label{lemmaA0}
	The matrix $A_0$ is invertible and the elements of its inverse $A_0^{-1}$ are uniformly bounded in $\rho_1,\dots,\rho_N\in[0,1]$
\end{lemma}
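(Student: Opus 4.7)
My plan is to combine a continuity/compactness argument with a pointwise invertibility statement for $A_0(\rho')$; the latter is the algebraic heart of the matter and I would import it from \cite{CJ,JUST}.

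First I would check continuity of $\rho' \mapsto A_0(\rho')$ on the admissible compact set, namely the closed simplex $\Delta = \{\rho' \in [0,1]^N : \sum_{i=1}^N \rho_i \leq 1\}$ (so that $\rho_{N+1} = 1 - \sum_{i=1}^N \rho_i \in [0,1]$). The only dependence of $A_0$ on $\rho'$ beyond the explicit factors in \eqref{defA0} is through $c = \sum_{k=1}^{N+1} \rho_k/M_k$; since all $M_i > 0$ and $\sum_{k=1}^{N+1}\rho_k = 1$, one has $1/\max_j M_j \leq c \leq 1/\min_j M_j$, so each $d_{ij} = 1/(c^2 M_i M_j D_{ij})$ and hence each entry of $A_0$ is continuous and bounded on $\Delta$.

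Second, I would prove $\det A_0(\rho') > 0$ pointwise on $\Delta$ by returning to the full $(N+1)\times(N+1)$ Maxwell-Stefan matrix $A$ defined in \eqref{matrA}. Using $d_{ij} = d_{ji}$ one checks $\mathbf{1}^T A = 0$, which is consistent with $\sum_i \nabla x_i = 0$. Following the symmetrization technique of \cite{CJ,JUST}, one shows that $A$, after suitable diagonal conjugation, is positive semi-definite with one-dimensional kernel $\mathrm{span}\{\mathbf{1}\}$, and hence strictly positive definite on the constraint hyperplane $\{J : \sum_{i=1}^{N+1} J_i = 0\}$. The reduction to the first $N$ species obtained by eliminating $\rho_{N+1}$ and $J_{N+1}$ (the linear operation encoded by $P$) is a linear isomorphism between this hyperplane and $\mathbb{R}^N$, which transfers the positive definiteness to the quadratic form $J' \mapsto (J')^T A_0(\rho') J'$ on all of $\mathbb{R}^N$; in particular $\det A_0(\rho') > 0$. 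Compactness of $\Delta$ and continuity of $\det A_0$ then yield a uniform lower bound $\det A_0 \geq \delta_0 > 0$ on $\Delta$, and since every cofactor of $A_0$ is continuous and bounded on $\Delta$, Cramer's rule delivers the claimed uniform bound on the entries of $A_0^{-1}$.

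The main obstacle is the non-degeneracy step. What makes it nontrivial is that $A_0$ is neither symmetric nor an M-matrix (the sign of $A^0_{ij} = -(d_{ij}-d_{i,N+1})\rho_i$ depends on that of $d_{ij}-d_{i,N+1}$, which is unrestricted), and that the boundary of $\Delta$ where some $\rho_i$ vanish must be handled: there the $i$-th row of $A_0$ collapses to $A^0_{ii}\,e_i^T$, so invertibility relies on $A^0_{ii} = d_{i,N+1}+\sum_{k\neq i}(d_{ik}-d_{i,N+1})\rho_k$ remaining nonzero. The symmetrization argument of \cite{CJ,JUST} handles both difficulties cleanly, so I would simply invoke it rather than redo the algebra.
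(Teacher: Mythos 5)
Your outer architecture (continuity of the entries of $A_0$ on the closed simplex, pointwise non-degeneracy, compactness plus Cramer's rule) is sound, and in spirit it matches the paper, whose entire proof is a one-line citation of \cite[Lemma 2.1, Lemma 2.3]{JUST} via Perron--Frobenius theory; like the authors, you ultimately outsource the algebraic core. However, the mechanism you propose for the non-degeneracy step contains a genuine error. The reduction from $A$ in \eqref{matrA} to $A_0$ in \eqref{defA0} is \emph{not} a congruence: it combines the row operation $P$ (acting on the equation) with the column operation eliminating $J_{N+1}=-\sum_{i\le N}J_i$, and these two linear maps are not adjoint to each other. Hence positive definiteness of $A$ on the hyperplane $\{\sum_i J_i=0\}$ does \emph{not} transfer to the quadratic form $J'\mapsto (J')^T A_0 J'$ on $\mathbb{R}^N$, and in fact that form is not positive definite in general. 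The paper warns about exactly this point after Lemma \ref{lemmaG}: positivity holds for $B=A_0^{-1}G^{-1}$, ``while for the matrix $A_0^{-1}(\rho')$ in general we do not have such a strong property'' --- and definiteness of $A_0^{-1}$ is equivalent to definiteness of $A_0$, since $y^TM^{-1}y=z^TMz$ for $z=M^{-1}y$. Concretely, take $N+1=3$, all $M_i=1$ (so $c\equiv1$), $D_{12}=0.1$, $D_{13}=D_{23}=1$, i.e.\ $d_{12}=10$, $d_{13}=d_{23}=1$, and $(\rho_1,\rho_2,\rho_3)=(0.9,0.1,0)$: then \eqref{defA0} gives $A^0_{11}=1.9$, $A^0_{12}=-8.1$, $A^0_{21}=-0.9$, $A^0_{22}=9.1$, so with $J'=(1,0.5)$ one finds $(J')^TA_0J'=-0.325<0$, although $\det A_0=10>0$. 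So your step ``in particular $\det A_0(\rho')>0$'' does not follow by the route you describe.

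What does transfer, and suffices, is \emph{injectivity}. Since $\mathbf{1}^TA=0$, the last component of $PAJ$ vanishes identically, so $A_0J'=0$ forces $PAJ=0$, hence $AJ=0$ for $J=(J',-\sum_iJ_i')$; the symmetrization/Perron--Frobenius argument gives $\ker A=\mathrm{span}\{\rho\}$ for strictly positive densities (note the kernel of the diagonally conjugated matrix is $\mathrm{span}\{\sqrt{\rho}\}$, not $\mathrm{span}\{\mathbf{1}\}$ as you wrote --- it is $AR$, $R=\mathrm{diag}(\rho_i)$, that kills $\mathbf{1}$), and since $\rho$ does not lie in the hyperplane, $J=0$ and $A_0$ is invertible. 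Two further cautions: the conjugation by $R^{\pm 1/2}$ degenerates when some $\rho_i=0$, so the boundary of the simplex requires its own argument --- this is precisely part of what \cite[Lemma 2.3]{JUST} supplies --- and your compactness step genuinely needs pointwise invertibility on the \emph{closed} simplex, because continuity of $\det A_0$ can only propagate a lower bound from a set where invertibility is already known; it cannot rule out the determinant vanishing in the limit at the boundary. With the false definiteness claim replaced by this kernel argument, your outline becomes a correct, fleshed-out version of the citation the paper itself uses.
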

This Lemma follows by using the Perron-Frobenius theory, see \cite[Lemma 2.1, Lemma 2.3]{JUST}.\\ \\ \indent
Now we define the entropy variables, we start by introducing the entropy density, or Gibbs free energy, that is
\begin{equation}
\label{defhrho}
h(\rho')=c\sum_{i=1}^{N+1}x_i\ln x_i,
\end{equation}
where $x_i$ is defined in \eqref{molarfrac}, $c=\sum_{k=1}^{N+1}\rho_k/M_k$, that is not a constant, and thanks to the relation $\rho_{N+1}=1-\sum_{k=1}^{N}\rho_k$, we interpret the last species as function of the previous one. \\ 
The entropy variables are defined as the derivative of $h$ with respect to the densities $\rho_i$.
\begin{lemma}
	The entropy variables are given by 
	\begin{equation}
	\label{defwi}
	w_i=\frac{\partial h(\rho')}{\partial \rho_i}=\frac{\ln x_i}{M_i}-\frac{\ln x_{N+1}}{M_{N+1}}, \ \ \ i=1,\dots,N.
	\end{equation}
\end{lemma}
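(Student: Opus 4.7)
The claim is purely a computation of a partial derivative, so my plan is essentially bookkeeping with the chain rule, taking into account the dependence $\rho_{N+1}=1-\sum_{k=1}^N \rho_k$.

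First I would rewrite $h$ in a form that makes the derivatives cleaner. Since $cx_i=\rho_i/M_i$ by definition \eqref{molarfrac}, one has
\begin{equation*}
h(\rho')=c\sum_{i=1}^{N+1}x_i\ln x_i=\sum_{i=1}^{N+1}\frac{\rho_i}{M_i}\ln x_i,
\end{equation*}
which avoids having to differentiate the awkward prefactor $c$ separately. Note also $\partial \rho_i/\partial \rho_j=\delta_{ij}$ for $i\le N$, whereas $\partial \rho_{N+1}/\partial \rho_j=-1$.

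Next I would apply $\partial/\partial \rho_j$ and split the result into two contributions: one where the derivative hits the factor $\rho_i/M_i$, and one where it hits $\ln x_i$. The first contribution is immediate,
\begin{equation*}
\sum_{i=1}^{N+1}\frac{1}{M_i}\frac{\partial \rho_i}{\partial \rho_j}\ln x_i=\frac{\ln x_j}{M_j}-\frac{\ln x_{N+1}}{M_{N+1}},
\end{equation*}
which is precisely the expression \eqref{defwi} we want. It remains to show that the second contribution vanishes.

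For the second contribution, the key observation is the identity $\rho_i/(M_i x_i)=c$ (immediate from \eqref{molarfrac}), which collapses the $i$-dependence and lets the constraint $\sum_i x_i=1$ do the work:
\begin{equation*}
\sum_{i=1}^{N+1}\frac{\rho_i}{M_i}\cdot\frac{1}{x_i}\frac{\partial x_i}{\partial \rho_j}=c\sum_{i=1}^{N+1}\frac{\partial x_i}{\partial \rho_j}=c\,\frac{\partial}{\partial \rho_j}\Big(\sum_{i=1}^{N+1}x_i\Big)=0.
\end{equation*}
Here the derivative $\partial x_i/\partial\rho_j$ is taken with $\rho_{N+1}$ regarded as a function of $\rho_1,\dots,\rho_N$, and $c$ itself depends on $\rho_j$, but only the total $\sum_i x_i\equiv 1$ matters after the factor $c$ is pulled out.

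There is no real obstacle; the only subtlety is remembering that $\rho_{N+1}$ is not an independent variable, so the chain rule must be applied consistently both in the $\rho_i/M_i$ factors (producing the $-\ln x_{N+1}/M_{N+1}$ term) and in the $x_i$'s inside the logarithm (where the cancellation via $\sum x_i=1$ takes place).
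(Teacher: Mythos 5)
Your computation is correct and is precisely the ``standard computation'' the paper invokes without writing out: rewriting $h(\rho')=\sum_{i=1}^{N+1}(\rho_i/M_i)\ln x_i$, differentiating with the constraint $\rho_{N+1}=1-\sum_{k=1}^N\rho_k$, and killing the second contribution via $\rho_i/(M_ix_i)=c$ together with the identity $\sum_{i=1}^{N+1}x_i\equiv 1$. Nothing is missing.
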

The equality \eqref{defwi} follows from a standard computation. \\
It can be proven that we can invert the relation \eqref{defwi}, and also the definition of the molar concentrations, that we recall is $x_i=\rho_i/(cM_i)$, can be inverted. These two properties are given in \cite[Lemma 6, Lemma 7]{CJ}, and by combining these results we get the following.
\begin{lemma}
	\label{wtorho}
	Let $w=(w_1,\dots,w_N)\in \mathbb{R}^N$ be given. Then there exists unique $(\rho_1,\dots,\rho_N)\in (0,1)^N$ satisfying $\sum_{i=1}^{N}\rho_i<1$ such that \eqref{defwi} holds for $\rho_{N+1}=1-\sum_{i=1}^{N}\rho_i$, $x_i=\rho_i/(cM_i)$ and $c=\sum_{i=1}^{N+1}\rho_i/M_i$. In addition the mapping $\mathbb{R}^N \to (0,1)^N$, $\rho'(w)=(\rho_1,\dots,\rho_N)$, is bounded.
\end{lemma}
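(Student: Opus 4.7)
The plan is to decompose the claimed inversion into two independent steps, mirroring the splitting into Lemmas 6 and 7 of \cite{CJ}: first pass from the entropy variables $w$ to the molar fractions $x = (x_1,\dots,x_{N+1})$, and then from $x$ to the mass densities $\rho' = (\rho_1,\dots,\rho_N)$. The mapping $\rho' \mapsto w$ is naturally factored through $x$ by means of \eqref{defwi} and \eqref{molarfrac}, so inverting it factor by factor is the natural route.

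For the first step, I would fix $w \in \mathbb{R}^N$ and set $w_{N+1} = 0$. Solving \eqref{defwi} for $x_i$ in terms of $x_{N+1}$ yields $x_i = x_{N+1}^{M_i/M_{N+1}} \, e^{M_i w_i}$ for $i=1,\dots,N$. Substituting the parametrization $y = x_{N+1}^{1/M_{N+1}} > 0$ turns the simplex constraint $\sum_{i=1}^{N+1} x_i = 1$ into the scalar equation
\begin{equation*}
F(y) := \sum_{i=1}^{N+1} y^{M_i} e^{M_i w_i} = 1.
\end{equation*}
Since $F$ is continuous and strictly increasing on $(0,\infty)$ with $F(0^+)=0$ and $F(+\infty)=+\infty$, there is a unique $y = y(w) \in (0,1)$ such that $F(y) = 1$, and therefore a unique $x = x(w)$ with all $x_i \in (0,1)$ and $\sum x_i = 1$. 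This is the core analytic content, and in my view the only point that requires a nontrivial observation; everything else is algebraic.

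For the second step, inverting \eqref{molarfrac} under the normalization $\sum_{i=1}^{N+1} \rho_i = 1$ is direct: multiplying $x_i = \rho_i/(cM_i)$ by $M_i$ and summing gives $c\sum_{k=1}^{N+1} M_k x_k = 1$, hence $c = \big(\sum_k M_k x_k\big)^{-1} > 0$, and then
\begin{equation*}
\rho_i(w) = \frac{M_i \, x_i(w)}{\sum_{k=1}^{N+1} M_k \, x_k(w)}, \qquad i=1,\dots,N+1,
\end{equation*}
lies in $(0,1)$ with $\sum_{i=1}^{N+1} \rho_i(w) = 1$. In particular $\sum_{i=1}^N \rho_i(w) = 1 - \rho_{N+1}(w) < 1$, and $\rho_{N+1}(w) = 1 - \sum_{i=1}^N \rho_i(w)$ is consistent with the convention used in \eqref{defwi}. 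Uniqueness follows by reversing these algebraic manipulations.

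Finally, the boundedness of the map $w \mapsto \rho'(w)$ from $\mathbb{R}^N$ into $(0,1)^N$ is immediate once both inversions are in place, since by construction each $\rho_i(w) \in (0,1)$, and hence $|\rho'(w)| \le \sqrt{N}$ uniformly in $w$. The only genuinely delicate point in the whole argument is verifying the monotonicity and surjectivity of $F$ above; the rest consists of algebraic rearrangements and direct verifications.
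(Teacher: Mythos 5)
Your proof is correct and follows essentially the same route as the paper, which simply cites Lemmas 6 and 7 of \cite{CJ}: those two lemmas are precisely your two steps (inverting $w \mapsto x$ via the strictly monotone scalar equation obtained from the simplex constraint, then recovering $\rho$ from $x$ algebraically using the normalization $\sum_i \rho_i = 1$). You have merely filled in the details the paper outsources to the reference, so there is nothing to criticize beyond the unstated (but easily checked) fact that $F(1)>1$, which places $y(w)$ in $(0,1)$.
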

\begin{remark}
	Observe that we can invert \eqref{defwi} to obtain only positive densities.
\end{remark}\bigskip
A characterization of the Hessian  of $h(\rho')$ with respect to the densities is given by 
\begin{equation}
\label{Hij}
\begin{split}
H_{ij}&:=\frac{\partial^2h(\rho')}{\partial \rho_i\partial \rho_j}=\frac{\partial w_i}{\partial \rho_j}\\
&=\frac{\delta_{ij}}{M_i\rho_i}+\frac{1}{M_{N+1}\rho_{N+1}}-\frac{1}{c}\bigg(\frac{1}{M_i}-\frac{1}{M_{N+1}}\bigg)\bigg(\frac{1}{M_j}-\frac{1}{M_{N+1}}\bigg),
\end{split}
\end{equation}
for $i,j=1,\dots,N$, and where $\delta_{ij}$ is the Kronecker delta. \\
The following Lemma states the convexity of the entropy, for the proof see \cite[Lemma 9]{CJ}.
\begin{lemma}
	\label{lemmaH}
	The matrix $(H_{ij})$, whose elements are defined in \eqref{Hij}, is symmetric and positive definite for all $\rho_1,\dots,\rho_N>0$ satisfying $\sum_{i=1}^{N}\rho_i<1$ 
\end{lemma}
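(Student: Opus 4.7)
Symmetry is immediate from the closed form \eqref{Hij}, since each term is manifestly symmetric under the exchange $i\leftrightarrow j$. So the plan focuses on strict positivity of the quadratic form $\xi\mapsto \xi^T H\xi$ for $\xi\in\mathbb{R}^N\setminus\{0\}$.

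The first step is to rewrite the quadratic form in a symmetric way over all $N+1$ indices. Given $\xi=(\xi_1,\dots,\xi_N)$, introduce the auxiliary component $\zeta_{N+1}=-\sum_{i=1}^N \xi_i$ and set $\zeta_i=\xi_i$ for $i=1,\dots,N$, so that $\sum_{i=1}^{N+1}\zeta_i=0$. A short computation using \eqref{Hij} then gives the identity
\begin{equation*}
\xi^T H\xi=\sum_{i=1}^{N+1}\frac{\zeta_i^2}{M_i\rho_i}-\frac{1}{c}\left(\sum_{i=1}^{N+1}\frac{\zeta_i}{M_i}\right)^2,
\end{equation*}
where the first two terms of \eqref{Hij} collapse into the single sum on the left (using $\zeta_{N+1}^2=(\sum_i\xi_i)^2$), and the linear form inside the last square is rewritten as $\sum_i\xi_i/M_i-(\sum_i\xi_i)/M_{N+1}=\sum_{i=1}^{N+1}\zeta_i/M_i$.

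The core step is then a weighted Cauchy--Schwarz inequality with the weights $\sqrt{\rho_i/M_i}$, namely
\begin{equation*}
\left(\sum_{i=1}^{N+1}\frac{\zeta_i}{M_i}\right)^2=\left(\sum_{i=1}^{N+1}\frac{\zeta_i}{\sqrt{M_i\rho_i}}\cdot\sqrt{\frac{\rho_i}{M_i}}\right)^2\leq \sum_{i=1}^{N+1}\frac{\zeta_i^2}{M_i\rho_i}\cdot\sum_{i=1}^{N+1}\frac{\rho_i}{M_i}.
\end{equation*}
Since $c=\sum_{i=1}^{N+1}\rho_i/M_i$, dividing by $c$ gives exactly that the subtracted term is bounded above by the first sum, so $\xi^T H\xi\geq 0$.

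To upgrade to strict positivity, I use the equality case of Cauchy--Schwarz: equality forces $\zeta_i=\lambda\rho_i$ for every $i=1,\dots,N+1$ and some $\lambda\in\mathbb{R}$. But by construction $\sum_{i=1}^{N+1}\zeta_i=0$ while $\sum_{i=1}^{N+1}\rho_i=1\neq 0$, so $\lambda=0$ and therefore $\zeta=0$, i.e.~$\xi=0$. This gives $\xi^T H\xi>0$ for every $\xi\neq 0$, completing the proof. The only mildly delicate point is the algebraic identity that puts the quadratic form in the symmetric $N+1$ form; everything else is elementary once the constraint $\sum_i\zeta_i=0$ is in place.
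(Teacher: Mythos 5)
Your proof is correct: the symmetrization to $N+1$ variables $\zeta$ with $\sum_{i=1}^{N+1}\zeta_i=0$, the identity for $\xi^T H\xi$, and the weighted Cauchy--Schwarz inequality with its equality case (using $\sum_{i=1}^{N+1}\rho_i=1$) are all valid and give strict positive definiteness. Note that the paper itself does not prove this lemma but defers to \cite[Lemma 9]{CJ}; your argument is essentially the one given in that reference, so it is a faithful, self-contained replacement for the citation rather than a different route.
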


Now we define the following matrix, that represent the derivative of $w_i$ with respect to $x_j$, 
\begin{equation}
\label{Gij}
G_{ij}:=\frac{\partial w_i}{\partial x_j}=\frac{1}{M_{N+1}x_{N+1}}+\frac{\delta_{ij}}{M_ix_i}=c\bigg(\frac{1}{\rho_{N+1}}+\frac{\delta_{ij}}{\rho_i}\bigg), \ \ i,j=1,\dots,N.
\end{equation}  
Then we recall an important Lemma given in \cite[Lemma 10]{CJ}.
\begin{lemma}
	\label{lemmaG}
	It holds for all $\rho_1,\dots, \rho_N>0$ satisfying $\rho_{N+1}=1-\sum_{i=1}^{N}\rho_i>0$:
	\begin{itemize}
		\item[(i)] The matrix $G(\rho')=(G_{ij})$ and its inverse $G^{-1}(\rho')$ are positive definite.
		\item[(ii)] $\nabla w(\rho')=G(\rho')\nabla x'(\rho')$.
		\item[(iii)] The elements of the $N\times N$ matrix $d\rho'/dx'=(\partial\rho_i/\partial x_k)$ are bounded by a constant which depends only on the molar masses $M_i$.
		\item[(iv)]The $N \times N$ matrix $B(\rho')=A_0^{-1}(\rho')G^{-1}(\rho')$ is symmetric, positive definite, and its elements are uniformly bounded.
	\end{itemize}
\end{lemma}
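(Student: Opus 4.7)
The plan is to dispatch (i)-(iii) by direct algebraic computation and treat (iv), the symmetry and positive definiteness of $B(\rho')$, as the substantive point.

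For (i), I would decompose
\begin{equation*}
G(\rho') = \mathrm{diag}\!\left(\tfrac{1}{M_i x_i}\right)_{i=1}^N + \tfrac{1}{M_{N+1}x_{N+1}}\,\mathbf{1}\mathbf{1}^T,
\end{equation*}
with $\mathbf{1}=(1,\ldots,1)^T\in\mathbb{R}^N$; the first summand is positive definite, the second positive semidefinite (rank one), so $G$ is symmetric positive definite and hence so is $G^{-1}$. A Sherman--Morrison computation, using the identity $\sum_{k=1}^{N+1}M_k x_k=1/c$, yields the closed form
\begin{equation*}
(G^{-1})_{ij}=\tfrac{\rho_i}{c}\delta_{ij}-\tfrac{\rho_i\rho_j}{c},
\end{equation*}
which I will reuse in (iv).

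For (ii), I regard $w_i$ as a smooth function of $x_1,\ldots,x_N$ (with $x_{N+1}=1-\sum_{j\le N}x_j$) and apply the spatial chain rule to get $\nabla w_i=\sum_j G_{ij}\nabla x_j$, which is the claimed identity in vector form. For (iii), I invert $x_i=\rho_i/(cM_i)$ together with $\sum_i\rho_i=1$ to obtain $\rho_i=M_ix_i/\sum_{k=1}^{N+1}M_kx_k$; differentiating and using $\sum_k M_kx_k\ge\min_k M_k>0$ (since $x_k\ge0$ and $\sum x_k=1$) bounds $|\partial\rho_i/\partial x_j|$ by a constant depending only on $M_1,\ldots,M_{N+1}$.

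The main obstacle is (iv). Combining (ii) with the Maxwell--Stefan identity \eqref{eqx'} gives $\nabla w=G\nabla x'=-GA_0J'$, hence $B^{-1}=GA_0$, so it suffices to prove that $B^{-1}$ is symmetric and positive definite. For symmetry I would compute $(GA_0)_{ij}$ entry-wise: expanding
\begin{equation*}
(GA_0)_{ij}=\tfrac{1}{M_{N+1}x_{N+1}}\sum_{k=1}^N A^0_{kj}+\tfrac{A^0_{ij}}{M_ix_i},
\end{equation*}
substituting $M_ix_i=\rho_i/c$, and simplifying $\sum_k A^0_{kj}$ via the symmetry $d_{ij}=d_{ji}$ (a consequence of $D_{ij}=D_{ji}$), the off-diagonal entries collapse to $c(d_{i,N+1}+d_{j,N+1}-d_{ij})$ plus an $(i,j)$-independent remainder, which is manifestly symmetric in $(i,j)$. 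For positive definiteness I would test against $J'\in\mathbb{R}^N$ and, extending the sums to index $N+1$ via the closures $\sum_{i=1}^{N+1}J_i=0$ and $\sum_{i=1}^{N+1}\nabla x_i=0$, identify
\begin{equation*}
\langle B^{-1}J',J'\rangle = \tfrac{c}{2}\sum_{i,k=1}^{N+1}\frac{d_{ik}}{\rho_i\rho_k}(\rho_kJ_i-\rho_iJ_k)^2,
\end{equation*}
a nonnegative quadratic form that vanishes only when $J_i/\rho_i$ is independent of $i$, which combined with $\sum J_i=0$ and $\rho_i>0$ forces $J'=0$. Uniform boundedness of the entries of $B=A_0^{-1}G^{-1}$ on the simplex then follows from Lemma~\ref{lemmaA0} and the closed form for $G^{-1}$ displayed above.
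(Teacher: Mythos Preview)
The paper does not actually prove this lemma: it simply records it as \cite[Lemma~10]{CJ} and moves on. Your proposal supplies a correct self-contained argument, and it is essentially the one used in the cited reference. In particular, your key identity for (iv),
\[
\langle GA_0 J',J'\rangle=\frac{c}{2}\sum_{i,k=1}^{N+1}\frac{d_{ik}}{\rho_i\rho_k}(\rho_kJ_i-\rho_iJ_k)^2,
\]
is exactly the entropy-dissipation computation underlying \cite{CJ,JUST}: one checks $(GJ')_i=c(J_i/\rho_i-J_{N+1}/\rho_{N+1})$ and $(A_0J')_i=(AJ)_i$ for $i\le N$, then uses $\sum_i(AJ)_i=0$ to lift the pairing to the full index set and symmetrize. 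The Sherman--Morrison formula for $G^{-1}$ and the direct symmetry check of $GA_0$ are also standard in that setting, so there is nothing to compare---your write-up simply fills in what the paper outsourced.
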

\begin{remark}
	\label{reminversion}
	Notice that thanks to Lemma \ref{lemmaA0}, we can plug the equation \eqref{aprmax} into \eqref{aprdens}, and rewrite it in terms of the entropy variables, namely 
	\begin{equation}
	\label{eqinw}
	\dt \rho'(w_\epsilon) + u_\epsilon \cdot \nabla \rho'(w_\epsilon)+(\div\ue) \rho'(w_\epsilon)- \div(B(w_\epsilon)\nabla w_\epsilon)=0.
	\end{equation}
\end{remark}
We point out that because of [Lemma \ref{lemmaG}, \textit{(iv)}], we have the positiveness of the matrix $B(w)$, while for the matrix $A_0^{-1}(\rho')$ in general we do not have such a strong property. In addition it is possible to be very precise on the positiveness of $B(w)$ in a relevant particular case.
\begin{lemma}
	\label{lemmacoerc}
	Let $w\in H^1(\Omega)$. Then there exists a constant $C_B>0$, only depending on the coefficients $D_{ij}$ and $M_i$ such that 
	\begin{equation}
	\iO \nabla w:B(w)\nabla w \ dz \geq C_B \iO \lvert \nabla \sqrt{x}\rvert^2 \ dz.
	\end{equation}
\end{lemma}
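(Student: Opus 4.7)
The plan is to recognize $\nabla w : B(w)\nabla w$ as the classical Onsager--symmetric Maxwell--Stefan dissipation, and then to estimate the Fisher information $|\nabla\sqrt{x_i}|^2$ from above by it via Cauchy--Schwarz applied directly to the Maxwell--Stefan law.

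First, by Lemma \ref{lemmaG}\textit{(ii)} we have $\nabla w = G(\rho')\nabla x'$, hence $B(\rho')\nabla w = A_0^{-1}(\rho')G^{-1}(\rho')\,G(\rho')\nabla x' = A_0^{-1}(\rho')\nabla x'$, which by \eqref{aprmax} equals $-J'$. Therefore pointwise
\begin{equation*}
\nabla w : B(w)\nabla w = -\sum_{i=1}^{N}\nabla w_i\cdot J_i.
\end{equation*}
Expanding $w_i = M_i^{-1}\ln x_i - M_{N+1}^{-1}\ln x_{N+1}$ from \eqref{defwi}, and using $\sum_{i=1}^{N+1}J_i = 0$ (which follows from $\rho^{*}=1$ and the definition of the barycentric velocity) together with $M_ix_i = \rho_i/c$, the sum extends symmetrically to all $N+1$ species:
\begin{equation*}
-\sum_{i=1}^N\nabla w_i\cdot J_i = -c\sum_{i=1}^{N+1}\frac{\nabla x_i\cdot J_i}{\rho_i}.
\end{equation*}
Now I substitute the full Maxwell--Stefan law $\nabla x_i = -\sum_k d_{ik}(\rho_k J_i - \rho_i J_k)$ and symmetrize the resulting double sum under $i\leftrightarrow k$, exploiting $d_{ik}=d_{ki}$. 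The symmetrization of $\rho_k|J_i|^2/\rho_i - J_i\cdot J_k$ is a perfect square, so one ends up with the Onsager representation
\begin{equation*}
\nabla w : B(w)\nabla w = \frac{c}{2}\sum_{i,k=1}^{N+1} d_{ik}\,\Big|\sqrt{\tfrac{\rho_k}{\rho_i}}\,J_i - \sqrt{\tfrac{\rho_i}{\rho_k}}\,J_k\Big|^2.
\end{equation*}

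Second, I estimate $|\nabla\sqrt{x_i}|^2 = |\nabla x_i|^2/(4x_i)$ from above. Applying discrete Cauchy--Schwarz to the Maxwell--Stefan expression (at most $N$ nonzero terms), together with the algebraic identity $|\rho_k J_i - \rho_i J_k|^2 = \rho_i\rho_k\,|\sqrt{\rho_k/\rho_i}J_i - \sqrt{\rho_i/\rho_k}J_k|^2$ and $\rho_i/x_i = cM_i$, yields termwise
\begin{equation*}
|\nabla\sqrt{x_i}|^2 \leq \frac{NcM_i}{4}\sum_{k=1}^{N+1}d_{ik}^{\,2}\,\rho_k\,\Big|\sqrt{\tfrac{\rho_k}{\rho_i}}\,J_i - \sqrt{\tfrac{\rho_i}{\rho_k}}\,J_k\Big|^2.
\end{equation*}
The key observation is that the surplus prefactor satisfies $M_id_{ik}\rho_k = \rho_k/(c^2 M_k D_{ik})$, and this is uniformly bounded by a constant depending only on $M_j$ and $D_{ij}$, since $\rho_k\leq 1$ and $c = \sum_j\rho_j/M_j \geq 1/\max_jM_j$ (using $\sum_{j=1}^{N+1}\rho_j = 1$). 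Summing on $i$ therefore controls the right-hand side by a constant multiple of $\sum_{i,k}d_{ik}|\cdots|^2 = (2/c)\,\nabla w:B(w)\nabla w$. Since $c$ is also bounded above by $1/\min_j M_j$, integrating over $\Omega$ gives the claimed inequality with $C_B$ depending only on $N$, $M_j$ and the $D_{ij}$.

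The main technical point is the symmetrization in the first part that produces the Onsager perfect-square form: this is the structural identity behind the Maxwell--Stefan entropy dissipation, and the sole step where the particular shape of $A_0$ and $G$ is actually used. Once that representation is in hand, the comparison with the Fisher information reduces to Cauchy--Schwarz plus the a priori two-sided bound on $c$ coming from the mass constraint $\sum_j\rho_j=1$.
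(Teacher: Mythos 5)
Your proof is correct, but note that the paper itself does not prove this lemma at all: its ``proof'' is the single line ``For the proof see \cite[Lemma 12]{CJ}.'' What you have written is therefore a self-contained reconstruction of the argument of that reference rather than an alternative to anything in the present paper: the identification $B(w)\nabla w=-J'$ via Lemma \ref{lemmaG} \textit{(ii)}, \textit{(iv)} and \eqref{aprmax}, the extension of $-\sum_{i\le N}\nabla w_i\cdot J_i$ to a sum over all $N+1$ species, the symmetrization under $i\leftrightarrow k$ yielding the Onsager form $\tfrac{c}{2}\sum_{i,k}d_{ik}\bigl|\sqrt{\rho_k/\rho_i}\,J_i-\sqrt{\rho_i/\rho_k}\,J_k\bigr|^2$, and the Cauchy--Schwarz comparison with $|\nabla\sqrt{x_i}|^2=|\nabla x_i|^2/(4x_i)$ using $\rho_i/x_i=cM_i$ and the two-sided bound on $c$ coming from $\sum_j\rho_j=1$ --- this is precisely the entropy-dissipation computation behind \cite[Lemma 12]{CJ}, and each step checks out. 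Two small points of rigor deserve attention. First, since $w\in H^1(\Omega)$ is arbitrary, there are no species velocities in this lemma, so $\sum_{i=1}^{N+1}J_i=0$ cannot be ``derived from the barycentric velocity''; rather, $J'$ must be \emph{defined} as $-A_0^{-1}(\rho'(w))\nabla x'(\rho'(w))$ and $J_{N+1}$ \emph{defined} as $-\sum_{i=1}^{N}J_i$, after which the full Maxwell--Stefan relations \eqref{maxstef} for all $i=1,\dots,N+1$ --- which you substitute twice, once in the symmetrization and once in the bound on $|\nabla x_i|^2$ --- follow from the equivalence of the reduced system \eqref{eqx'} with the full system, the ``standard computation'' asserted in Section \ref{secstate}; this equivalence is exactly what licenses those substitutions and should be invoked explicitly. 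Second, your constant also depends on $N$ through the crude $N$-term Cauchy--Schwarz, which is harmless since $N$ is fixed by the data $\{M_i,D_{ij}\}$. With these glosses the proof stands, and it has the merit of making the paper self-contained at a point where it currently is not.
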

For the proof see \cite[Lemma 12]{CJ}. \\ \\ \indent
Finally we have the computation that justify the introduction of the new term in the equation \eqref{aprdens}.
\begin{lemma}
	\label{lemmafondens}
	Let $w_i$ given as in \eqref{defwi}, $w=(w_1,\dots,w_N)^{T}$ and $\rho'(w)$ obtained by Lemma \ref{wtorho}. Then if $w\in H^1(\Omega)$, for any $u\in H^1_0(\Omega)$ we have that
	\begin{equation}
	\label{hatbplusdiv}
	\hat{b}(u,\rho'(w),w)+\frac{1}{2}\iO\div u \big(\rho'(w)\cdot w\big)dz=-\iO \div u \bigg (\frac{\ln x_{N+1}(\rho'(w))}{M_{N+1}} \bigg)dz,
	\end{equation}
	where $\hat{b}$ is defined in \eqref{defbhat} and $x_{N+1}(\rho'(w))$ is given by Lemma \ref{wtorho}.
\end{lemma}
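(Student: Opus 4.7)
The plan is to unfold the definition of $\hat b$, collect all the divergence contributions via one integration by parts (using $u\in H^1_0(\Omega)$), and thereby reduce the identity to a single pointwise algebraic calculation for $\sum_{i=1}^N \rho_i\nabla w_i$.

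First, I would expand $\hat b(u, \rho'(w), w)$ using \eqref{defbhat}: the explicit $\tfrac12$ in the statement combines with the $\tfrac12$ built into $\hat b$, so that the left-hand side of \eqref{hatbplusdiv} equals
\[
\iO (u\cdot\nabla \rho')\cdot w\,dz + \iO (\div u)(\rho'\cdot w)\,dz.
\]
Now the scalar Leibniz rule $u\cdot\nabla(\rho'\cdot w) = (u\cdot\nabla\rho')\cdot w + (u\cdot\nabla w)\cdot\rho'$, together with integration by parts of $(\div u)(\rho'\cdot w)$ (no boundary term because $u\in H^1_0(\Omega)$), makes the first two terms cancel, leaving
\[
-\iO (u\cdot\nabla w)\cdot\rho'\,dz = -\iO u\cdot\Big(\textstyle\sum_{i=1}^N \rho_i\nabla w_i\Big)\,dz.
\]

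Next, I would establish the pointwise identity $\sum_{i=1}^N \rho_i\nabla w_i = -\nabla\bigl(\ln x_{N+1}/M_{N+1}\bigr)$. Substituting the explicit expression \eqref{defwi} and the relation $\rho_i = cM_ix_i$ coming from \eqref{molarfrac}, a direct computation yields
\[
\rho_i \nabla w_i = c\,\nabla x_i - \frac{cM_i x_i}{M_{N+1}x_{N+1}}\,\nabla x_{N+1}.
\]
Summing over $i=1,\dots,N$ and invoking the two normalization constraints $\sum_{i=1}^{N+1}x_i=1$ (so $\sum_{i=1}^N \nabla x_i = -\nabla x_{N+1}$) and $\sum_{i=1}^{N+1}\rho_i=1$ (which, via $\rho_i=cM_ix_i$, gives $c\sum_{i=1}^N M_ix_i = 1-\rho_{N+1} = 1-cM_{N+1}x_{N+1}$), the two terms proportional to $c\,\nabla x_{N+1}$ cancel identically and what survives is exactly $-\nabla x_{N+1}/(M_{N+1}x_{N+1}) = -\nabla(\ln x_{N+1}/M_{N+1})$.

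To conclude, one more integration by parts turns $-\iO u\cdot \nabla(\ln x_{N+1}/M_{N+1})\,dz$ into $-\iO (\div u)(\ln x_{N+1}/M_{N+1})\,dz$, which is the right-hand side of \eqref{hatbplusdiv}. The only genuinely delicate point is the cancellation in the middle step, where the two unit-sum constraints and the relation $\rho_i = cM_ix_i$ must conspire to eliminate the non-constant factor $c$ and collapse a sum of $N$ gradients into a single logarithmic gradient; everything else is bookkeeping. Regularity issues (making sense of $\ln x_{N+1}$ and justifying the two integrations by parts under the hypothesis $w\in H^1(\Omega)$) are controlled by the strict positivity and boundedness of $\rho'(w)$ coming from Lemma \ref{wtorho}, and can be handled by a standard truncation/approximation argument if one wishes to be fully rigorous.
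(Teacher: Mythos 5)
Your proof is correct and follows essentially the same route as the paper: reduce the left-hand side to $-\iO (u\cdot\nabla w)\cdot\rho'(w)\, dz$ (the paper invokes the antisymmetry property \eqref{bhat3}, you rederive it via Leibniz and integration by parts), then use $\rho_i=cM_ix_i$ together with $\sum_{i=1}^{N+1}x_i=1$ and $\sum_{i=1}^{N+1}\rho_i=1$ to collapse $\sum_{i=1}^{N}\rho_i\nabla w_i$ into $-\nabla(\ln x_{N+1})/M_{N+1}$, exactly the cancellation the paper performs under the integral sign, and finish with one more integration by parts. The only blemish is a pair of compensating sign slips in your last paragraph: substituting your pointwise identity gives $+\iO u\cdot\nabla\big(\ln x_{N+1}/M_{N+1}\big)\,dz$ (not its negative), and integrating that by parts yields the stated right-hand side $-\iO \div u \,\big(\ln x_{N+1}/M_{N+1}\big)\,dz$.
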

\begin{remark}
	\label{remextraterm}
	From the Lemma \ref{lemmafondens} it is now more clear how the addition of the term $(\div u)\rho'$, in the equation \eqref{aprdens}, helps to overcome the loss of the divergence free constraint. In fact the left hand side of \eqref{hatbplusdiv} is estimated by a reminder term that, as we will see later on, is bounded in some suitable space.
\end{remark}
\begin{proof}
	By the definition of $\hat{b}$ given in \eqref{defbhat} and the property \eqref{bhat3}, we infer that
	\begin{equation}
	\label{lemmaconto1}
	\begin{split}
	\hat{b}(u,\rho'(w),w)+\frac{1}{2}\iO\div u \big(\rho'(w)\cdot w\big)dz
	=-\iO (u \cdot \nabla w)\cdot \rho'(w) dz.
	\end{split}
	\end{equation}
	Now we handle the right hand side of \eqref{lemmaconto1}. By using \eqref{defwi} and \eqref{molarfrac}, we get
	\begin{equation}
	\begin{split}
	\label{lemmaconto2}
	-\iO (u \cdot \nabla w)\cdot \rho'(w) dz&=-\sum_{i=1}^{N}\iO (u \cdot \nabla w_i) \rho_i(w) dz\\
	&=-\sum_{i=1}^{N} \iO \rho_i(w)u \cdot \bigg(\frac{\nabla x_i}{M_ix_i}-\frac{\nabla x_{N+1}}{M_{N+1}x_{N+1}}\bigg)dz \\
	&=-\sum_{i=1}^{N} \iO cu\cdot \nabla x_i dz +\iO\sum_{i=1}^{N}\rho_i(w)\frac{u\cdot \nabla x_{N+1}}{M_{N+1}x_{N+1}}dz.
	\end{split}
	\end{equation}
	Then, since $\displaystyle{\sum_{i=1}^{N}\rho_i=1-\rho_{N+1}}$ and $\displaystyle{\sum_{i=1}^{N+1}x_i=1}$, we have that
	\begin{equation}
	\label{lemmaconto3}
	\begin{split}
	\iO\sum_{i=1}^{N}\rho_i(w)\frac{u\cdot \nabla x_{N+1}}{M_{N+1}x_{N+1}}dz&=\iO \bigg(\frac{1-\rho_{N+1}(w)}{M_{N+1}x_{N+1}}\bigg)u \cdot \nabla x_{N+1}\\
	&=\frac{1}{M_{N+1}}\iO u\cdot \nabla(\ln x_{N+1})dz-\iO cu \cdot\nabla x_{N+1}dz \\
	&=-\iO \div u \bigg (\frac{\ln x_{N+1}}{M_{N+1}}\bigg ) dz+\sum_{i=1}^{N}\iO cu\cdot \nabla x_i dz.
	\end{split}
	\end{equation}
	Combining \eqref{lemmaconto2} and \eqref{lemmaconto3}, we end up with
	\begin{align}
	\label{lemmaconto4}
	-\iO (u \cdot \nabla w)\cdot \rho'(w) dz=-\iO \div u \bigg (\frac{\ln x_{N+1}}{M_{N+1}}\bigg ) dz.
	\end{align} 
\end{proof}

Now we are ready to prove the Theorems.
\section{Proof of Theorem \ref{thexistence}}
\label{secproof1}
Let us define the notion of weak solution for the system \eqref{aprdens}-\eqref{aprnav2}.
\begin{definition}
	\label{defweakaprinc}
	We say that $(u_\epsilon,p_\epsilon,\rho_\epsilon)$ is a weak solution to \eqref{aprdens}-\eqref{aprnav2} if for any $v \in C^\infty_0(\Omega\times[0,T); \mathbb{R}^d), r\in C^\infty_0(\Omega\times[0,T); \mathbb{R})$ we have that
	\begin{align}
	\notag
	-&\int_{0}^{T}\iO \ue \cdot \dt v \ dzdt+\int_{0}^{T} \hat{b}(\ue,\ue,v) \ dt+\int_{0}^{T}\iO \nabla u_\epsilon:\nabla v \ dzdt \\
	\label{weakapru} &+\int_{0}^{T}\iO\nabla p_\epsilon\cdot v \ dzdt=\iO u_0\cdot v(\cdot,0) \ dz +\int_{0}^{T}\iO f\cdot v \ dzdt, \\ \notag\\
	\label{weakaprp}-\epsilon& \int_{0}^{T}\iO p_\epsilon \dt r \ dz dt +\int_{0}^{T}\iO (\div \ue )r \ dz dt=\iO p_0 w(\cdot,0) \ dz. 
	\end{align}
	While $\rho$ has to satisfy that for any $q \in C^\infty_0(\bar{\Omega}\times[0,T); \mathbb{R}^N)$ with $\nabla q \cdot \nu|_{\partial \Omega}=0$,
	\begin{equation}
	\label{defweakrho}
	\begin{split}
	&-\int_{0}^{T}\int_{\Omega}\rho_\epsilon' \cdot \dt q \ dzdt+\int_{0}^{T}\int_{\Omega}\nabla q : A_0^{-1}(\rho_\epsilon')\nabla x'(\rho_\epsilon') \ dzdt\\
	&+\int_{0}^{T}\hat{b}(u_\epsilon,\rho_\epsilon',q)\ dt+\frac{1}{2}\int_{0}^{T}\iO\div \ue \big(\rho_\epsilon'\cdot q \big)dz\\
	&=\iO(\rho')^0\cdot q(\cdot,0)dz.
	\end{split}
	\end{equation}
	where $(\rho_0)'=(\rho^0_1,...,\rho^0_N)$ and the matrix $A_0$ is defined in \eqref{defA0}. 
\end{definition}
Observe that thanks to Lemma \ref{lemmaA0}, we plug equation \eqref{aprmax} into \eqref{aprdens} and then define the notion of weak solution. \\ \indent
To prove the Theorem \ref{thexistence}, we firstly set up the semidiscretization in time for the whole system and prove the existence of an approximating solution. 
\\ Then we prove a priori estimates and finally we pass to the limit.
\subsection{Approximated problems}
From now on we will use the following notation, let $u, v\in \mathbb{R}^n$ and $A, B \in\mathbb{R}^{n\times n}$, for any $n\in\mathbb{N}$, then
\begin{equation*}
\iO u\cdot v \ dz= \scalar{u}{v}, \ \ \ \iO A:B \ dz=\scalar{A}{B}.
\end{equation*}
Consider $M\in \mathbb{N}$ and set $\tau = T/M$. Let $k=1,\dots,M$.
Define 
\begin{equation}
\label{ftau}
f^{k-1}=\frac{1}{\tau}\int_{(k-1)\tau}^{k\tau}f(t,\cdot)dt, \ \text{for  any} \ k=1,\dots,M.
\end{equation}

Since we want to solve our problem via entropy variables, in order to have some regularity properties we have to define a regularized semidiscretization version of the equation \eqref{eqinw}. For the equation \eqref{aprnav1} and \eqref{aprnav2}, we proceed as for the Navier-Stokes. Hence the semidiscretization for the system \eqref{aprdens}-\eqref{aprnav2} is defined as follows.  \\ \indent

Given $\ue^{k-1}\in L^2(\Omega;\mathbb{R}^d)$, $\pe^{k-1}\in L^2(\Omega;\mathbb{R})$ and $w_\epsilon^{k-1}\in L^\infty(\Omega;\mathbb{R}^N)$, we want to solve the following problems:	for any $v\in H^1_0(\Omega;\mathbb{R}^d)$, $r \in L^2(\Omega;\mathbb{R})$ and $q\in\widetilde{H}^2(\Omega;\mathbb{R}^N)$
\begin{align}
\label{disc1}&\scalar{\frac{\ue^k-\ue^{k-1}}{\tau}}{v}+\hat{b}(\ue^k,\ue^k,v)+\scalar{\nabla \ue^k}{\nabla v}+\scalar{\nabla \pe^k}{v}=\scalar{f^k}{v},\\
\label{disc2}&\epsilon\scalar{\frac{\pe^k-\pe^{k-1}}{\tau}}{r}+\scalar{\div \ue^k}{r}=0, \\ 
\begin{split}
\label{disc3}&\scalar{\frac{\rho'(w_\epsilon^k)-\rho'(w_\epsilon^{k-1})}{\tau}}{q}+\scalar{\nabla q}{B(w_\epsilon^k)\nabla w_\epsilon^k}+\hat{b}(\ue^k,\rho'(w_\epsilon^k),q)\\
&+\frac{1}{2}\scalar{(\div \ue^k)\rho'(w_\epsilon^k) }{q}+\lambda\big[\scalar{\Delta w_\epsilon^k}{\Delta q}+\scalar{w_\epsilon^k}{q}\big]=0.
\end{split}
\end{align}

Our first purpose is to prove the existence of a solution $(\ue^k,\pe^k,\we^k)$ to \eqref{disc1}-\eqref{disc3}, and then define the approximated solution in time. \\ \indent
We will obtain estimates on the velocity field by only using the equations \eqref{disc1} and \eqref{disc2}. To prove the existence of a solution for the equation \eqref{disc3}, it will be sufficient to have an $H^1_0(\Omega)$ bound for the velocity $u_\epsilon^k$. For this reason we start by proving the existence of a solution $(u_\epsilon^k,p_\epsilon^k)$ to equations \eqref{disc1} and \eqref{disc2}.
\subsubsection{Existence of $(u_{\epsilon}^k,\pe^k)$} 
We resume the existence result in the following proposition.
\begin{proposition}
	\label{lemmauekpek}
	Let $\ue^{k-1}\in L^2(\Omega)$ and $\pe^{k-1}\in L^2(\Omega)$. Then there exist a solution $(\ue^k,\pe^k)$ to \eqref{disc1} and \eqref{disc2} such that $\ue^k \in H^1_0(\Omega), \ \pe^k \in L^2(\Omega)$.
\end{proposition}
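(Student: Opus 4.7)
The plan is to decouple the pressure from the velocity equation, reduce \eqref{disc1}--\eqref{disc2} to a single nonlinear elliptic problem for $u_\epsilon^k\in H^1_0(\Omega)$, and solve that problem by Galerkin approximation combined with Lemma \ref{lemmabr}. More precisely, since \eqref{disc2} holds for every $r\in L^2(\Omega)$, it is equivalent to the pointwise identity $p_\epsilon^k = p_\epsilon^{k-1} - (\tau/\epsilon)\,\div u_\epsilon^k$ in $L^2(\Omega)$. Substituting this into \eqref{disc1} and integrating by parts the pressure term yields the following problem: find $u_\epsilon^k\in H^1_0(\Omega;\mathbb{R}^d)$ such that, for every $v\in H^1_0(\Omega;\mathbb{R}^d)$,
\begin{equation*}
\tfrac{1}{\tau}\scalar{u_\epsilon^k}{v}+\hat b(u_\epsilon^k,u_\epsilon^k,v)+\scalar{\nabla u_\epsilon^k}{\nabla v}+\tfrac{\tau}{\epsilon}\scalar{\div u_\epsilon^k}{\div v}=\tfrac{1}{\tau}\scalar{u_\epsilon^{k-1}}{v}+\scalar{p_\epsilon^{k-1}}{\div v}+\scalar{f^k}{v}.
\end{equation*}
Once $u_\epsilon^k$ is produced, $p_\epsilon^k$ is recovered from the explicit formula above and automatically lies in $L^2(\Omega)$.

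To solve the reduced problem, fix a basis $\{\varphi_j\}_{j\in\mathbb{N}}$ of $H^1_0(\Omega;\mathbb{R}^d)$ and look for $u_m=\sum_{j=1}^m\alpha_j\varphi_j$ satisfying the above identity against each $\varphi_i$, $i=1,\dots,m$. This defines a continuous map $P:\mathbb{R}^m\to\mathbb{R}^m$ whose zeros are the Galerkin solutions. Testing with $u_m$ itself and exploiting the crucial cancellation $\hat b(u_m,u_m,u_m)=0$ from property \eqref{bhat4}, together with Young's inequality on the three linear right-hand side terms, one obtains
\begin{equation*}
\scalar{P(u_m)}{u_m}\geq \tfrac{1}{2\tau}\|u_m\|_{L^2}^2+\tfrac{1}{2}\|\nabla u_m\|_{L^2}^2+\tfrac{\tau}{\epsilon}\|\div u_m\|_{L^2}^2 - C\bigl(\tau,\epsilon,\|u_\epsilon^{k-1}\|_{L^2},\|p_\epsilon^{k-1}\|_{L^2},\|f^k\|_{H^{-1}}\bigr).
\end{equation*}
Hence $\scalar{P(u_m)}{u_m}>0$ whenever $\|u_m\|_{H^1_0}$ is larger than some $k_0$ independent of $m$, and Lemma \ref{lemmabr} produces a Galerkin solution $u_m$ with $\|u_m\|_{H^1_0}\leq k_0$.

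The final step is to pass to the limit $m\to\infty$. From the uniform bound in $H^1_0(\Omega)$ one extracts a subsequence $u_m\rightharpoonup u_\epsilon^k$ weakly in $H^1_0$, and by the compact embedding $H^1_0(\Omega)\hookrightarrow L^4(\Omega)$ (valid for $d\leq 3$) also strongly in $L^4$ and in $L^2$. The linear terms pass to the limit immediately. For the trilinear term, using identity \eqref{bhat1} one writes $\hat b(u_m,u_m,\varphi_i)$ against a fixed basis vector $\varphi_i$ as an integral against the smooth $\nabla\varphi_i$ of the quadratic quantity $u_m\otimes u_m$, which converges strongly in $L^2$ by the compact embedding; the divergence correction is handled analogously by combining the weak $L^2$ convergence of $\div u_m$ with the strong $L^2$ convergence of $u_m\cdot\varphi_i$. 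Hence $u_\epsilon^k$ solves the reduced problem, and setting $p_\epsilon^k=p_\epsilon^{k-1}-(\tau/\epsilon)\div u_\epsilon^k\in L^2(\Omega)$ recovers \eqref{disc1}--\eqref{disc2}.

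The only delicate point is the passage to the limit in the nonlinear term $\hat b(u_m,u_m,\cdot)$, which is the usual stumbling block for Navier--Stokes Galerkin schemes; in dimension three it is precisely the compact embedding $H^1_0\hookrightarrow L^4$ that makes the argument work, since only an $H^1_0$ bound (and not an $H^2$ bound) is available at this stage. The extra $(\tau/\epsilon)(\div u)(\div v)$ term is not an obstacle — it is in fact positive and adds to the coercivity — so no new ideas beyond the classical Navier--Stokes treatment are required here.
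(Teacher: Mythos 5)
Your proof is correct, but it takes a genuinely different route from the paper. You exploit the fact that the discrete pressure equation \eqref{disc2} contains no spatial derivative of $\pe^k$: holding for every $r\in L^2(\Omega)$, it is equivalent to the algebraic relation $\pe^k=\pe^{k-1}-(\tau/\epsilon)\div \ue^k$, which you substitute into \eqref{disc1} (interpreting $\scalar{\nabla \pe^k}{v}$ as $-\scalar{\pe^k}{\div v}$) to obtain a single penalized elliptic problem for the velocity, with the extra coercive term $(\tau/\epsilon)\scalar{\div \ue^k}{\div v}$; you then run Galerkin plus Lemma \ref{lemmabr} on the velocity alone and recover the pressure explicitly at the end. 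The paper instead keeps the pair $(\ue^k,\pe^k)$ coupled throughout: it runs the Galerkin scheme on the product space $X\times Y$ spanned by bases of $H^1_0(\Omega;\mathbb{R}^d)$ and $L^2(\Omega;\mathbb{R})$, applies the same Brouwer-type Lemma \ref{lemmabr} to a map on $X\times Y$, and passes to the limit in both components (weak $L^2$ convergence for the pressure). Both arguments hinge on the same two ingredients — the cancellation \eqref{bhat4} for coercivity, and the skew-symmetry \eqref{bhat3} (equivalently \eqref{bhat1}) combined with the compact embedding of $H^1_0(\Omega)$ into $L^4(\Omega)$ for the limit in the trilinear term — so the analytic core is shared. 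What your decoupling buys is economy: one unknown, no pressure basis, no weak limit for the pressure sequence, and coercivity that is strengthened rather than threatened by the compressibility term; this is the classical penalty-method view of artificial compressibility. What the paper's coupled treatment buys is that the discrete energy inequality \eqref{estukpk} for the pair $(u^k_m,p^k_m)$ comes out of the Galerkin stage for free and is reused verbatim (via lower semicontinuity) in Lemma \ref{lemmaest} for the uniform-in-$\tau$ estimates; with your route you would obtain the same inequality a posteriori by testing the limit equations with $(\ue^k,\pe^k)$, which are admissible test functions, so nothing is lost, but that extra step should be recorded since the later sections depend on it. The only point to make explicit in your write-up is the standard identification underlying Lemma \ref{lemmabr}: the map $P$ on the span of $\{\varphi_1,\dots,\varphi_m\}$ should be defined through the Riesz representation (or an $H^1_0$-orthonormal basis, as the paper chooses), so that $\scalar{P(u_m)}{u_m}$ is computed in the same inner product in which the sphere $\norma{u_m}{}=k_0$ is taken.
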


\begin{proof}
	We use the Galerkin procedure. Consider an orthonormal basis $\{v_i\}$ of $H^1_0(\Omega;\mathbb{R}^d)$, where $v_i\in\mathcal{D}(\Omega;\mathbb{R}^d)$, and an orthonormal basis $\{r_i\}$ of $L^2(\Omega;\mathbb{R})$, with $r_i \in \mathcal{D}(\Omega;\mathbb{R})$, and $\mathcal{D}(\Omega;\cdot)$ is defined in $\eqref{spaces}_1$. 
	\\ 	For each $m\in \mathbb{N}$, we define an approximating solution $u^k_{\epsilon, m}, \ p^k_{\epsilon, m}$ by projection on the first $m$-elements of the basis,
	\begin{equation*}
	\begin{split}
	u^k_{\epsilon, m} &=\sum_{i=1}^{m}\alpha^i_{\epsilon, m}v_i,  \ \ \text{for} \ \alpha^i_{\epsilon, \ m}\in \mathbb{R}\\
	p^k_{\epsilon, m}&=\sum_{j=1}^{m}\beta^j_{\epsilon,m}r_j, \ \ \text{for} \  \beta^j_{\epsilon,m}\in \mathbb{R}.
	\end{split}
	\end{equation*}
	In the following computations, for simplicity of notation, we omit the sub index $\epsilon$.\\ \indent
	For any $v\in \mathrm{span}\{v_1,\dots,v_m\}$ and $r\in \mathrm{span}\{r_1,\dots,r_m\}$, we have to find a solution of
	\begin{align}
	\label{discm1}&\scalar{\frac{u^k_m-u^{k-1}}{\tau}}{v}+\hat{b}(u_m^k,u_m^k,v)+\scalar{\nabla u_m^k}{\nabla v}+\scalar{\nabla p_m^k}{v}=\scalar{f^k}{v},\\
	\label{discm2}&\epsilon\scalar{\frac{p_m^k-p^{k-1}}{\tau}}{r}+\scalar{\div u^k_m}{r}=0,
	\end{align}
	We observe that \eqref{discm1} and \eqref{discm2} are nonlinear equations for the coefficients of $u_m^k$ and $p_m^k$. We will use Lemma \ref{lemmabr} to solve it. Define
	\begin{equation*}
	X=\mathrm{span}\{v_1,\dots,v_m\}, \ \ \ Y=\mathrm{span}\{r_1,\dots,r_m\},
	\end{equation*} then $X\times Y$, equipped with the scalar product induced in the standard way, is a finite dimensional Hilbert space. Then for any $(\phi_m,\psi_m)\in X\times Y$, we define the following map,
	$P:X\times Y\to X\times Y$,  
	\begin{equation*}
	\displaystyle
	P\begin{pmatrix}
	\displaystyle
	\phi_m  \\
	\displaystyle
	\psi_m
	\end{pmatrix} =
	\begin{pmatrix}
	\displaystyle
	\pi_{X}\big(\frac{\phi_m-u^{k-1}}{\tau}+(\phi_m \cdot \nabla)\phi_m + (1/2)(\div \phi_m) \phi_m+\nabla \psi_m-\Delta \phi_m - f\big)\\\displaystyle
	\pi_{Y}\big(\epsilon\frac{\psi_m-p^{k-1}}{\tau}+\div \phi_m\big)\\ 
	\end{pmatrix}, 
	\end{equation*}
	where $\pi_{X}$ and $\pi_{Y}$ are the projections from $H^1_0(\Omega;\mathbb{R}^n)$ and
	$L^2(\Omega;\mathbb{R})$ onto $X$ and $Y$ respectively. $P$ is continuous since it is the composition of continuous operators.
	\\ \indent
	In order to check the hypothesis of Lemma \ref{lemmabr}, let us take $\xi_m=(\phi_m,\psi_m)\in X\times Y$. Thanks to property \eqref{bhat4} of $\hat{b}$ and by integration by parts, we have that
	\begin{align*}
	\scalar{P(\xi_m)}{\xi_m}_{X\times Y}=&\scalar{\frac{\phi_m-u^{k-1}}{\tau}}{\phi_m}+\hat{b}(\phi_m,\phi_m,\phi_m)+\scalar{\nabla \psi_m}{\phi_m}\\
	&+\scalar{\nabla \phi_m}{\nabla \phi_m}-\scalar{f^k}{\phi_m}\\
	&+\epsilon\scalar{\frac{\psi_m-p^{k-1}}{\tau}}{\psi_m}+\scalar{\div \phi_m}{\psi_m}\\
	=&\frac{1}{\tau}\norma{\phi_m}{}^2+\frac{\epsilon}{\tau}\norma{\psi_m}{}^2+\norma{\nabla \phi_m}{}^2\\
	&-\frac{1}{\tau}\scalar{u^{k-1}}{\phi_m}-\frac{\epsilon}{\tau}\scalar{p^{k-1}}{\psi_m}-\scalar{f^k}{\phi_m}.
	\end{align*}
	Then we get that 
	\begin{equation}
	\label{ellipse}
	\scalar{P(\xi_m)}{\xi_m}_{X\times Y}
	\geq \frac{1}{\tau}\norma{\phi_m}{}\big(\norma{\phi_m}{}-\norma{u^{k-1}}{}-\tau\norma{f^{k}}{}\big)+\frac{\epsilon}{\tau}\norma{\psi_m}{}\big(\norma{\psi_m}{}-\norma{p^{k-1}}{}\big).
	\end{equation}
	Now by choosing properly\footnote{To choose properly the constant $C$, observe that if we impose to zero the right hand side of \eqref{ellipse}, for any fixed $\epsilon>0$ we have an equation like $x(x-a)+\epsilon y(y-b)=0$, with $a,b>0$ given. But this equation represent an ellipse, and so the condition in \eqref{straightellipse} is reduced to find a constant $C>0$, such that the straight line $x+y=C$, for $x,y>0$, lies entirely in the exterior of the ellipse, and this is possible since the ellipse is bounded. } a constant $C>0$, that depends only on given quantities, we get that
	\begin{equation}
	\label{straightellipse}
	\scalar{P(\xi)}{\xi}>0 \ \ \text{for} \ \ \norma{\xi}{X\times Y}=\norma{\phi_m}{X}+\norma{\psi_m}{Y}=C.
	\end{equation}
	Then by Lemma \ref{lemmabr} there exist a point $\bar{\xi}=(\bar{\phi},\bar{\psi})$ such that $P(\bar{\xi})=0$. This means that $\bar{\xi}$ is a solution for \eqref{discm1}-\eqref{discm2}, then set $(u_{m}^k,p_{m}^k)=\bar{\xi}$ .\\ \\ \indent
	To perform the limit $m\to \infty$, we need some uniform estimate on $m$ for $(u_m^k,p_m^k)$. We test \eqref{discm1} against $u_m^k$ and \eqref{discm2} against $p_m^k$ and sum up.
	By using \eqref{bhat4} and integrating by parts, we have that 
	\begin{align*}
	\scalar{u^k_m-u^{k-1}}{u_m^k}+\epsilon\scalar{p_m^k-p^{k-1}}{p_m^k}+\tau\norma{\nabla u_m^k}{}^2=\tau\scalar{f^k}{u_m^k},
	\end{align*}
	Then by using the relation 
	\begin{equation*}
	2\scalar{a-b}{a}=\norma{a}{}^2-\norma{b}{}^2+\norma{a-b}{}^2,
	\end{equation*}
	we get that
	\begin{equation}
	\label{estukpk}
	\begin{split}
	\norma{u_m^k}{}^2+\epsilon\norma{p_m^k}{}^2+&2\tau\norma{\nabla u_m^k}{}^2+\norma{u_m^k-u^{k-1}}{}^2+\epsilon\norma{p_m^k-p^{k-1}}{}^2\\ &=\norma{u^{k-1}}{}^2+\epsilon\norma{p^{k-1}}{}^2+2\tau\scalar{f^k}{u_m^k}\\&\leq \norma{u^{k-1}}{}^2+\epsilon\norma{p^{k-1}}{}^2+2\tau C_p\norma{f^k}{} \norma{\nabla u^k_m}{}\\
	&\leq \norma{u^{k-1}}{}^2+\epsilon\norma{p^{k-1}}{}^2+C_p^2\tau\norma{f^k}{}^2+\tau \norma{\nabla u^k_m}{}^2,
	\end{split}
	\end{equation}
	where the last inequalities follows from Poincaré and Young's inequalities. So as $m\to \infty$, we have that up to subsequences, 
	\begin{align*}
	u_{m_j}^k\to u_\epsilon^k \ \ \ &\text{strongly in} \ L^2(\Omega),\\
	&\text{weakly  in} \ H^1_0(\Omega), \\
	p_{m_j}^k\rightharpoonup p_\epsilon^k \ \ \ &\text{weakly  in} \ L^2(\Omega).
	\end{align*}
	Hence $(u_\epsilon^k,p_\epsilon^k)$ is a solution of \eqref{disc1}-\eqref{disc2}.
\end{proof}

\subsubsection{Existence of $w_{\epsilon}^k$}
In this section we prove the existence of a solution for \eqref{disc3}, where $\ue^k$ is the solution to \eqref{disc1} and \eqref{disc2} given by Proposition \ref{lemmauekpek}. \\ 
First of all we notice that due to Lemma \eqref{wtorho}, we can invert the entropy variables to obtain strictly positive densities, for this reason we need to introduce a new space. \\
For $0<\alpha<1$ and define
\begin{equation*}
\begin{split}
Y_{\alpha}=\Bigl\{ & q=(q_1,\dots,q_N)\in L^\infty(\Omega;\mathbb{R}^N): q_i \geq \alpha \ \text{for} \ i=\,\dots,N, \\
& q_{N+1}=1-\sum_{i=1}^{N}q_i\geq \alpha \Bigr\}.
\end{split}
\end{equation*}
First we prove the existence of an approximating solution in the case of strictly positive initial densities, then we will see how to overcome this restriction.
\begin{lemma}
	\label{lemma1aprdens}
	Let $\alpha^{k-1}\in (0,1)$ and $\rho^{k-1}\in Y_{\alpha^{k-1}}$ with $\rho^{k-1}=\rho'(w^{k-1})$. Then there exist $\alpha^k\in (0,1)$ and $\we^k \in \widetilde{H}^2(\Omega;\mathbb{R}^n)$ which solves \eqref{disc3} satisfying $\rho'(\we^k)\in Y_{\alpha^k}$.
\end{lemma}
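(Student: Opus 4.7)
My plan is to apply the Leray--Schauder theorem (Theorem \ref{lerayschauder}) to a homotopy of the linearisation of \eqref{disc3}. For $\sigma\in[0,1]$ and $\tilde w\in L^\infty(\Omega;\mathbb{R}^N)$, define $T(\tilde w,\sigma)=w$ as the unique $w\in\widetilde H^2(\Omega;\mathbb{R}^N)$ solving, for every $q\in\widetilde H^2(\Omega;\mathbb{R}^N)$,
\begin{equation*}
\lambda\bigl[\scalar{\Delta w}{\Delta q}+\scalar{w}{q}\bigr]+\scalar{B(\tilde w)\nabla w}{\nabla q}
=-\sigma\Bigl\{\tfrac{1}{\tau}\scalar{\rho'(\tilde w)-\rho'(\we^{k-1})}{q}+\hat b(\ue^k,\rho'(\tilde w),q)+\tfrac12\scalar{(\div \ue^k)\rho'(\tilde w)}{q}\Bigr\}.
\end{equation*}
Freezing the coefficients is natural because $\rho'(\cdot)$ and $B(\cdot)$ are bounded functions of their argument by Lemma \ref{wtorho} and Lemma \ref{lemmaG}\,(iv). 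At $\sigma=0$ the unique solution is $w\equiv 0$, so $T(\cdot,0)\equiv 0$, while a fixed point at $\sigma=1$ is exactly a weak solution of \eqref{disc3}.

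Solvability of the frozen-coefficient linear problem follows from Lax--Milgram: the left-hand side is a continuous bilinear form on $\widetilde H^2$ (using boundedness of $B(\tilde w)$) and is coercive thanks to the $\lambda$-terms, since the Neumann boundary condition encoded in $\widetilde H^2$ gives, by elliptic regularity, $\|q\|_{H^2}^2\lesssim \|\Delta q\|_{L^2}^2+\|q\|_{L^2}^2$. The right-hand side is a bounded linear functional on $\widetilde H^2$ because $\rho'(\tilde w)$ is uniformly bounded (Lemma \ref{wtorho}), $\ue^k\in H^1_0$ (Proposition \ref{lemmauekpek}), and $\hat b$ is continuous by \eqref{bhat2}. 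Compactness of $T$ in the $L^\infty$-topology follows from the compact embedding $\widetilde H^2\hookrightarrow L^\infty$ valid for $d\le 3$, and continuity from the continuous dependence of $\rho'$ and $B$ on their argument (Lemma \ref{wtorho}).

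The core difficulty lies in deriving the uniform a priori bound on fixed points $w=T(w,\sigma)$ demanded by Theorem \ref{lerayschauder}. I would test with $q=w$ itself. Convexity of the entropy (Lemma \ref{lemmaH}) gives
\begin{equation*}
\tfrac{\sigma}{\tau}\scalar{\rho'(w)-\rho'(\we^{k-1})}{w}\ \geq\ \tfrac{\sigma}{\tau}\iO\bigl(h(\rho'(w))-h(\rho'(\we^{k-1}))\bigr)dz,
\end{equation*}
the diffusion term is bounded below by $\sigma C_B\iO|\nabla\sqrt{x}|^2\,dz$ via Lemma \ref{lemmacoerc}, and the regularisation contributes $\lambda\bigl(\|\Delta w\|_{L^2}^2+\|w\|_{L^2}^2\bigr)$, which controls $\|w\|_{\widetilde H^2}$. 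The two convective contributions are the heart of the argument: Lemma \ref{lemmafondens} collapses them into the single remainder $-\sigma M_{N+1}^{-1}\iO\div \ue^k \, \ln x_{N+1}(\rho'(w))\,dz$, and bounding this is the main obstacle because $x_{N+1}$ may a priori be small, so $\ln x_{N+1}$ is not obviously in any $L^p$ with a controlled norm. I would handle it by first observing that $\iO \div\ue^k\,dz=0$ (since $\ue^k\in H^1_0$), allowing one to replace $\ln x_{N+1}$ by $\ln x_{N+1}-\overline{\ln x_{N+1}}$ in the integrand, and then applying Young's inequality together with the $L^\infty$ control of $w$ inherited from the regularisation (via $\widetilde H^2\hookrightarrow L^\infty$ in $d\le 3$) to split the remainder into $\eta\|w\|_{\widetilde H^2}^2+C(\eta,\we^{k-1},\ue^k,\lambda)$, absorbing the first piece back into the left-hand side.

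Putting everything together yields $\|w\|_{\widetilde H^2}\le M$ uniformly in $\sigma$, so Theorem \ref{lerayschauder} produces a fixed point $\we^k:=T(\we^k,1)\in\widetilde H^2$ solving \eqref{disc3}. Since $\we^k\in \widetilde H^2\hookrightarrow L^\infty$, Lemma \ref{wtorho} guarantees that each $\rho_i(\we^k)\in(0,1)$ pointwise with lower and upper bounds depending only on $\|\we^k\|_{L^\infty}$; in particular $\rho_{N+1}(\we^k)=1-\sum_{i=1}^N\rho_i(\we^k)>0$ uniformly, so $\rho'(\we^k)\in Y_{\alpha^k}$ for some $\alpha^k\in(0,1)$.
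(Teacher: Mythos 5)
Your proposal follows the paper's proof in every structural respect: the same frozen-coefficient homotopy $T(\bar w,\gamma)$ resolved by Lax--Milgram (with coercivity supplied by the $\lambda$-regularisation on $\widetilde H^2$), compactness of $T$ through $\widetilde H^2(\Omega)\hookrightarrow L^\infty(\Omega)$, and the a priori fixed-point estimate obtained by testing with $w$ itself, combining convexity of the entropy (Lemma \ref{lemmaH}) with Lemma \ref{lemmafondens} to collapse the convective terms into the single remainder $\frac{\gamma}{M_{N+1}}\scalar{\div \ue^k}{\ln x_{N+1}(\rho'(w))}$. The one place where you depart from the paper is in how this remainder is bounded, and that is exactly where your argument has a genuine gap.

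Neither of your two devices closes the estimate. (i) Subtracting the mean is legitimate (indeed $\iO \div \ue^k\, dz=0$), but it buys nothing: to exploit it you would need to control the oscillation $\norma{\ln x_{N+1}-\overline{\ln x_{N+1}}}{L^2(\Omega)}$, say by Poincaré--Wirtinger, which requires a bound on $\nabla \ln x_{N+1}=\nabla x_{N+1}/x_{N+1}$ --- exactly as singular as the original term wherever $x_{N+1}$ is small. (ii) More seriously, invoking ``the $L^\infty$ control of $w$ inherited from the regularisation'' is circular: the regularisation gives only the qualitative fact $w\in\widetilde H^2\subset L^\infty$; there is no a priori bound on $\norma{w}{L^\infty}$ uniform over fixed points and over $\gamma$ --- producing such a bound is the whole purpose of the Leray--Schauder estimate. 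As written, your claim that the remainder splits as $\eta\norma{w}{H^2(\Omega)}^2+C(\eta,\we^{k-1},\ue^k,\lambda)$ with $C$ independent of $w$ is unjustified. The missing ingredient is a quantitative pointwise link between $\ln x_{N+1}(\rho'(w))$ and $w$, which comes from the structure \eqref{defwi}: since $\sum_{i=1}^{N+1}x_i=1$, at a.e.\ point some index $i^*$ satisfies $x_{i^*}\geq 1/(N+1)$; if $i^*=N+1$ then $\lvert\ln x_{N+1}\rvert\leq \ln(N+1)$, while otherwise $\ln x_{N+1}/M_{N+1}=\ln x_{i^*}/M_{i^*}-w_{i^*}$ yields $\lvert\ln x_{N+1}\rvert\leq C(1+\lvert w\rvert)$ with $C$ depending only on $N$ and the molar masses. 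Hence $\norma{\ln x_{N+1}(\rho'(w))}{L^2(\Omega)}\leq C(1+\norma{w}{L^2(\Omega)})$, and with this in hand your Young-plus-absorption step (taking $\eta$ small relative to $\lambda$) does close the bound, with constants depending on $\lambda,\tau$ only --- which is all Theorem \ref{lerayschauder} requires. For comparison, the paper does not absorb at all: in \eqref{winh2} it bounds $\norma{\ln[x_{N+1}(\rho'(w))]}{L^2(\Omega)}$ by a constant outright, citing $0<x_{N+1}<1$; that assertion is itself only rigorous through a pointwise estimate of the above kind, so once you supply it your variant is complete and, at this particular step, arguably more transparent than the paper's.
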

\begin{remark}
	The result that we obtain has the same structure of the one obtained by \textsc{Chen, X.} and \textsc{J\"ungel, A.} in \cite{CJ}, and we will follow the same outline to prove it. But in our case we have to deal with the $\div \ue$, since we do not have the divergence free condition. In the proof  it will be more clear why it is essential to add the term $(\div \ue) \rho_{\epsilon}'$ in \eqref{aprdens} to bound the entropy variables.
\end{remark}
\begin{proof}
	We have to solve the implicit problem \eqref{disc3}. For this purpose we will set up a fixed point procedure. We define a map $T:X\times [0,1]\to X$, in a suitable Banach space $X$, that associates to the fixed unknowns the solution that we find by means of the Lax-Milgram lemma. Finally we conclude by checking the hypothesis to apply the Leray-Schauder fixed point Theorem \ref{lerayschauder}, for the map $T$. For this part will be important the term that we have introduced to balance the presence of the divergence of the velocity field.\\ \\ \indent
	Let $\bar{w}\in L^\infty(\Omega;\mathbb{R}^N)$ and $\gamma\in [0,1]$. For any $w, \ q\in \widetilde{H}^2(\Omega;\mathbb{R}^n)$, we define 
	\begin{equation}
	\label{a2}
	a_2(w,q)=\lambda\big[\scalar{\Delta w}{\Delta q}+\scalar{w}{q}\big] + \scalar{\nabla q}{B(\bar{w})\nabla w},
	\end{equation}
	\begin{equation}
	\label{F2}
	\begin{split}
	F_2(q)=-&\frac{\gamma}{\tau} \scalar{\rho'(\bar{w})-\rho^{k-1}}{q}\\-& \gamma\bigg[\hat{b}(u_\epsilon^k,\rho'(\bar{w}),q)+\frac{1}{2}\scalar{(\div \ue^k)\rho'(\bar{w})}{q}\bigg].
	\end{split}
	\end{equation}
	We want to prove that there exists a unique $w\in \widetilde{H}^2(\Omega;\mathbb{R}^n)$ which solves    		
	\begin{equation}
	\label{aprdens3}
	a_2(w,q)=F_2(q) \ \ \text{for any} \ q\in \widetilde{H}^2(\Omega;\mathbb{R}^n).
	\end{equation}
	From \eqref{F2}, we deduce that $F_2(\cdot)$ is a linear operator. To prove that it is bounded, first we notice that $\ue^k$ is in $H^1_0(\Omega)$, then we use the boundedness of $\rho'(\bar{w})$ and the continuity of $\hat{b}$, given in Lemma \ref{wtorho} and in \eqref{bhat2} respectively. \\ \indent
	From $(iv)$ in Lemma \ref{lemmaG} we have that the bilinear form $a_2(\cdot,\cdot)$ is bounded and that $B(\bar{w})$ is positive definite. So it follows that   		
	\begin{equation*}
	a_2(w,w) \geq C\lVert w \rVert_{H^2(\Omega)},
	\end{equation*}
	hence $a_2(\cdot,\cdot)$ is a bounded bilinear coercive operator. Then the Lax-Milgram Lemma provides the existence of a unique solution $w$ in $ \widetilde{H}^2(\Omega;\mathbb{R}^n)$ to the equation \eqref{aprdens3}. \\ \\ \indent
	To conclude the proof of Lemma \ref{lemma1aprdens}, we apply the Leray-Schauder fixed point Theorem \ref{lerayschauder}. Define a map $T:L^\infty(\Omega;\mathbb{R}^N)\times[0,1]\to L^\infty(\Omega;\mathbb{R}^N)$, 
	\begin{equation*}
	T(\bar{w},\gamma)=w, \ \text{where} \ w \ \text{solves \eqref{aprdens3}}.
	\end{equation*}
	By construction, $T(\bar{w},0)=0$ for all $\bar{w}\in L^\infty(\Omega;\mathbb{R}^N)$. Recalling that  $H^2(\Omega)$ is compactly embedded in $L^\infty(\Omega)$, we have also that $T$ is compact.\\ 
	To apply Leray-Schauder Theorem \ref{lerayschauder}, it remains to prove that there exists a constant $C>0$ such that $\norma{w}{L^\infty(\Omega)} \leq C$ for all $(w,\gamma)\in L^\infty(\Omega;\mathbb{R}^N)\times[0,1]$ satisfying the fixed point problem $w=T(w,\gamma)$. Such fixed point $w \in  L^\infty(\Omega;\mathbb{R}^N)$ solves \eqref{aprdens3} with $\bar{w}$ replaced by $w$. Take $w\in \widetilde{H}^2(\Omega;\mathbb{R}^N)$ as a test function and from Lemma \ref{lemmafondens} we get
	\begin{equation}
	\label{F2w}
	\begin{split}
	F_2(w)=&-\frac{\gamma}{\tau} \scalar{\rho'(w)-\rho^{k-1}}{w}\\&- \gamma\bigg[\hat{b}(u_\epsilon^k,\rho'(w),w)+\frac{1}{2}\scalar{(\div \ue^k)\rho'(w)}{w}\bigg]\\
	=& -\frac{\gamma}{\tau} \scalar{\rho'(w)-\rho^{k-1}}{w} \\
	& +\frac{\gamma}{M_{N+1}}\scalar{\div \ue^k}{\ln x_{N+1}(\rho'(w))},
	\end{split}
	\end{equation}
	and we point out that here is where we strongly use the new term in equation \eqref{aprdens}. 	Now by using \eqref{F2w}, the relation $a_2(w,w)-F_2(w)=0$ is equivalent to   	    
	\begin{equation}
	\label{aprdensk}
	\begin{split}
	&\frac{\gamma}{\tau} \scalar{\rho'(w)-\rho^{k-1}}{w}+\lambda \lVert w \rVert_{H^2(\Omega)}^2\\ &+\scalar{\nabla w}{B(w)\nabla w} =\frac{\gamma}{M_{N+1}}\scalar{\div \ue^k}{\ln x_{N+1}(\rho'(w))}.
	\end{split}
	\end{equation}
	By Lemma \ref{lemmaH} the entropy density $h$, defined in \eqref{defhrho}, is convex. So we get  
	\begin{equation}
	\label{aprdensh}
	h(\rho'(w))-h(\rho^{k-1})\leq \frac{\partial h}{\partial \rho'}\bigg\rvert_{\rho'(w)} \cdot (\rho'(w)-\rho^{k-1})=w\cdot (\rho'(w)-\rho^{k-1}),
	\end{equation}
	where the last equality follows from \eqref{defwi}.
	\\
	Combining the positiveness of $B(w)$ and \eqref{aprdensh} with \eqref{aprdensk}, we get 
	\begin{equation}
	\begin{split}
	\gamma\iO h(\rho'(w))dz + \lambda \tau \lVert w \rVert_{H^2(\Omega)}^2\leq& \ \gamma \iO h(\rho^{k-1})dz\\&+\gamma \tau\iO \div(\ue^k)\frac{\ln \big[x_{N+1}(\rho'(w))\big]}{M_{N+1}}dz
	\end{split}.
	\end{equation}
	By the definition of $h(\rho'(w))$ given in \eqref{defhrho}, and the fact that $\rho'(w)\in Y_\alpha$, for some $\alpha>0$, we get that $h(\rho'(w))$ is well defined and bounded, therefore we can perform the following estimate,
	\begin{equation}
	\label{winh2}
	\begin{split}
	\lambda \tau \lVert w \rVert_{H^2}^2 \leq & \gamma \bigg \lvert  \iO h(\rho^{k-1})dz \bigg \rvert+\gamma \bigg \lvert\iO h(\rho'(w))dz \bigg \rvert\\
	&\gamma\tau\bigg \lvert\iO \div(\ue^k)\frac{\ln \big[x_{N+1}(\rho'(w))\big]}{M_{N+1}}dz\bigg \rvert\\ &\leq C+C_1\norma{\ue^k}{H^1(\Omega)}\norma{\ln \big[x_{N+1}(\rho'(w))\big]}{L^2(\Omega)}\\
	&\leq C,
	\end{split}
	\end{equation}
	where the last inequality follows because, by Lemma \ref{wtorho}, we have that $0<x_{N+1}(\rho'(w))<1$, and we assume finite initial entropy. By \eqref{winh2} we have the uniform bound for $w$ in $H^2(\Omega)$, hence in $L^\infty(\Omega)$ (observe that this bound is independent from $\epsilon$). 
	\\ \indent
	By Leray-Schauder fixed-point Theorem \ref{lerayschauder}, there exist a fixed point $w \in \widetilde{H}^2(\Omega;\mathbb{R}^n)$ solution of $T(w,1)=w$. Then $\we^k=w$ is a solution of \eqref{disc3}. \\
	To conclude the proof we have to show that $\rho'(\we^k)\in Y_{\alpha_k}$ for a proper $\alpha_k$. By Lemma \ref{wtorho}, we know that given $\we^k$ there exist a unique $(\rho_1(\we^k),\dots,\rho_N(\we^k))\in (0,1)^N$ such that $\rho_{N+1}(\we^k):=1-\sum_{i=1}^{N}\rho_i(\we^k)>0$. So we can define  
	\begin{equation*}
	\alpha_k=\min\limits_{1\leq i \leq N} \mathrm{essinf}_{\Omega}\rho_i(\we^k)>0.
	\end{equation*} 
	Therefore we conclude that $\rho'(\we^k)\in Y_{\alpha^k}$. 
\end{proof}

\subsection{Uniform estimates}
In order to perform the limit $(\lambda,\tau)\to 0$, we need uniform estimates with respect to $(\lambda,\tau)$. \\ 
For the velocity and pressure some bounds will be a direct consequence of the inequalities proved in the previous section. While for the densities it will be more involved. We need also estimates that guarantees compactness in time, since we want strong convergence for the nonlinear terms. For this reason we have to control also the time derivative of our quantities. \\ \indent
We proceed as follows, first we give the estimates regarding the approximated Navier-Stokes equation and then we will control the densities.

\subsubsection{Uniform estimates for the approximated Navier-Stokes equations} 

In the  following Lemma we prove uniform bounds for the velocity and the pressure, that essentially follows from previous computations.
\begin{lemma}
	\label{lemmaest}
	It holds that 
	\begin{align}
	\label{estLinf}	&\sup_{1\leq k\leq M}\big[\norma{\ue^k}{L^2(\Omega)}^2+\epsilon\norma{\pe^k}{L^2(\Omega)}^2\big]\leq C(u_0,p_0,f),\\
	\label{estnablauk}&\tau\sum_{j=1}^{M}\norma{\nabla \ue^j}{L^2(\Omega)}^2\leq C(u_0,p_0,f).
	\end{align}
\end{lemma}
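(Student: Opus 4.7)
The strategy is to pass to the limit $m\to\infty$ in the Galerkin inequality \eqref{estukpk}, which was already derived inside the proof of Proposition \ref{lemmauekpek}, and then iterate in the time index $k$. Essentially no new computation is needed.

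Rearranging \eqref{estukpk} by absorbing the extra $\tau\norma{\nabla u_m^k}{}^2$ on the right into the left-hand side and discarding the nonnegative squared-difference terms $\norma{u_m^k-u^{k-1}}{}^2$ and $\epsilon\norma{p_m^k-p^{k-1}}{}^2$, one obtains
\begin{equation*}
\norma{u_m^k}{}^2+\epsilon\norma{p_m^k}{}^2+\tau\norma{\nabla u_m^k}{}^2 \leq \norma{\ue^{k-1}}{}^2+\epsilon\norma{\pe^{k-1}}{}^2+C_p^2\tau\norma{f^k}{}^2.
\end{equation*}
The strong convergence $u_m^k\to \ue^k$ in $L^2(\Omega)$, the weak convergence $\nabla u_m^k\rightharpoonup \nabla\ue^k$ in $L^2(\Omega)$, and the weak convergence $p_m^k\rightharpoonup \pe^k$ in $L^2(\Omega)$ established at the end of the Galerkin argument, combined with weak lower semicontinuity of norms, pass this inequality to the limit and yield the same bound for $(\ue^k,\pe^k)$.

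Next I would iterate the resulting discrete energy inequality from $j=1$ to $j=k$, telescoping the norms at the previous step down to the initial values:
\begin{equation*}
\norma{\ue^k}{}^2+\epsilon\norma{\pe^k}{}^2+\tau\sum_{j=1}^{k}\norma{\nabla \ue^j}{}^2 \leq \norma{u_0}{}^2+\epsilon\norma{p_0}{}^2+C_p^2\tau\sum_{j=1}^{k}\norma{f^j}{}^2.
\end{equation*}
By Jensen's inequality applied to the definition \eqref{ftau}, $\tau\norma{f^j}{}^2 \leq \int_{(j-1)\tau}^{j\tau}\norma{f(t)}{}^2 dt$, so the right-hand side is dominated by $\norma{u_0}{}^2+\epsilon\norma{p_0}{}^2+C_p^2\norma{f}{L^2(0,T;H^{-1}(\Omega))}^2$, a quantity depending only on the data and independent of $\tau$, $k$, $\lambda$ and $\epsilon$.

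Taking the supremum over $k\in\{1,\dots,M\}$ then gives \eqref{estLinf}, while choosing $k=M$ in the telescoped inequality gives \eqref{estnablauk}. There is no real obstacle: the lemma is a direct bookkeeping consequence of \eqref{estukpk}; the only point meriting attention is that Jensen's inequality produces a $\tau$-independent bound on the aggregate source term, which is what secures uniformity of the constants in the discretization parameter.
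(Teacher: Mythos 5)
Your proposal is correct and follows essentially the same route as the paper: the paper likewise passes \eqref{estukpk} to the limit $m\to\infty$ via weak lower semicontinuity of the norm, sums the resulting discrete energy inequality over $k$, and concludes both \eqref{estLinf} and \eqref{estnablauk}. Your explicit treatment of the source term via Jensen's inequality applied to \eqref{ftau} is a detail the paper leaves implicit, but it is exactly what makes the constant $\tau$-independent.
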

\begin{proof}
	By the lower semicontinuity of the norm, from \eqref{estukpk} we get that 
	\begin{equation}
	\label{ukpk}
	\begin{split}
	\norma{\ue^k}{}^2+\epsilon\norma{\pe^k}{}^2+\tau\norma{\nabla \ue^k}{}^2\leq \norma{\ue^{k-1}}{}^2+\epsilon\norma{\pe^{k-1}}{}^2+C\tau\norma{f^k}{}^2,
	\end{split}
	\end{equation}
	summing up \eqref{ukpk} for $k=1,\dots,s$, we get that
	\begin{equation}
	\label{ukpk2}
	\begin{split}
	\norma{\ue^s}{}^2+\epsilon\norma{\pe^s}{}^2+\tau \sum_{j=1}^{s}\norma{\nabla \ue^j}{}^2\leq \norma{u_{0}}{}^2+\epsilon\norma{p_{0}}{}^2+C\tau\sum_{j=1}^{s}\norma{f^j}{}^2,
	\end{split}
	\end{equation}
	The inequality \eqref{estLinf} follows from \eqref{ukpk2}. Summing up \eqref{ukpk} for $k=1,\dots,M$, we have that
	\begin{equation}
	\label{l2h1uk}
	\tau\sum_{j=1}^{M}\norma{\nabla \ue^j}{}^2\leq C(u_0,p_0,f).
	\end{equation} 
\end{proof}
We point out that since we need to pass into the limit in nonlinear terms for the velocity, we need to prove compactness in time. Hence we proceed by estimating the fractional derivative in time for the velocity, which will allow us to apply Theorem \ref{thHgamma}. Therefore we introduce the following step functions, 
\begin{equation}
\label{utau}
\begin{split}
&\ue^{(\tau)}:[0,T]\times\Omega\to \mathbb{R}^d \\
&\ue^{(\tau)}(t,\cdot)=\ue^{k-1}(\cdot) \ \ \text{for} \ \ (k-1)\tau\leq t <k\tau, \ k=1,\dots,M,
\end{split}
\end{equation}
similarly for $\pe^{(\tau)}$, $f^{(\tau)}$, where we recall the definition of $f^{k-1}$ given in \eqref{ftau}. \\
Then we define an operator that approximates the time derivative via a difference quotient, namely, consider a function $g$, then
\begin{equation}
\label{discder}
\dtau g(t):=\frac{g(t)-g(t-\tau)}{\tau}.
\end{equation}
We rewrite the equations \eqref{disc1} and \eqref{disc2} using our definitions, in the following way 
\begin{align}
\label{newdisc1}&\scalar{\dtau \uetau}{v}+\hat{b}(\uetau,\uetau,v)+\scalar{\nabla \uetau}{\nabla v}+\scalar{\nabla \petau}{v}=\scalar{f^{(\tau)}}{v},\\
\label{newdisc2}&\epsilon\scalar{\dtau \petau}{r}+\scalar{\div \uetau}{r}=0,
\end{align}
for any $t\in [\tau,T]$. In the interval $[0,\tau)$ we have simply the initial data.\\ 
Now Lemma \ref{lemmaest}, in terms of our step functions, reads as  
\begin{align}
\label{spaceutau}&\uetau \in L^{\infty}(0,T;L^2(\Omega))\cap L^2(0,T;H^1_0(\Omega))\\
&\sqrt{\epsilon}\petau \in L^\infty(0,T;L^2(\Omega)),
\end{align}
but being the time interval finite, we also know that $\uetau \in L^{1}(0,T;L^2(\Omega))$ and $\sqrt{\epsilon}\petau \in L^1(0,T;L^2(\Omega))$. Which means that the Fourier transform in time of those functions is well defined (after extending the functions to zero outside the interval $[0,T]$). Now we define the following linear operator 
\begin{align*}
&\phi^{(\tau)}:[0,T]\times H^1_0(\Omega)\to\mathbb{R}\\
&\phi^{(\tau)}(\cdot,v)=\scalar{f^{(\tau)}}{v}-\hat{b}(\uetau,\uetau,v)-\scalar{\nabla \uetau}{\nabla v},
\end{align*} 
and we observe that thanks to the continuity of the operator $\hat{b}$ (see \eqref{bhat2}), we have
\begin{equation*}
\norma{\phi^{(\tau)}(t)}{H^{-1}}\leq \norma{f^{(\tau)}(t)}{H^{-1}}+C\norma{\uetau(t)}{H^1_0}^2+\norma{\uetau(t)}{H^1_0},
\end{equation*}
and so by \eqref{spaceutau}, and hypothesis on the force field $f$, we get that 
\begin{equation}
\label{phil1}
\phi^{(\tau)}\in L^{1}(0,T;H^{-1}(\Omega)).
\end{equation}
By using $\phi^{(\tau)}$ we rewrite again the equations \eqref{newdisc1} and \eqref{newdisc2} in the following way 
\begin{align}
\label{findisc1}&\scalar{\dtau \uetau}{v}+\scalar{\nabla \petau}{v}=\scalar{\phi^{(\tau)}}{v},\\
\label{findisc2}&\epsilon\scalar{\dtau \petau}{r}+\scalar{\div \uetau}{r}=0,
\end{align}
for any $t\in [\tau,T]$.\\ \indent
For any function $g$, the Fourier transform of the discretized time derivative, is given by
\begin{equation}
\label{discderfourier}
\widehat{\dtau g}(s)=\bigg(\frac{1-e^{-2\pi i \tau s}}{\tau}\bigg)\widehat{g}(s).
\end{equation}
\begin{remark}
	Observe that, formally, if in \eqref{discderfourier} we Taylor expand the exponential at the first order, we recover that the Fourier transform of the derivative means multiplication by a factor $2\pi i s$, but in our case is not exactly like that because we are approximating the derivative. Later we will overcome this problem.
\end{remark}
Now we are ready to take the Fourier transform in time for the equation \eqref{findisc1} and \eqref{findisc2}, (extending also $\phi^{(\tau)}$ to zero outside $[0,T]$), and we obtain that
\begin{align}
\label{findisc11}&\bigg(\frac{1-e^{-2\pi i \tau s}}{\tau}\bigg)\scalar{\widehat{\uetau}}{v}+\scalar{\nabla \widehat{\petau}}{v}=\scalar{\widehat{\phi^{(\tau)}}}{v},\\
\label{findisc21}&\epsilon\bigg(\frac{1-e^{-2\pi i \tau s}}{\tau}\bigg)\scalar{ \widehat{\petau}}{r}+\scalar{\div \widehat{\uetau}}{r}=0.
\end{align}
Take now $v=\widehat{\uetau}$ and $r=\widehat{\petau}$ and sum up the equations \eqref{findisc11} and \eqref{findisc21}, to obtain 
\begin{equation}
\bigg(\frac{1-e^{-2\pi i \tau s}}{\tau}\bigg)\bigg(\norma{\widehat{\uetau}(s)}{L^2(\Omega)}^2+\epsilon\norma{\widehat{\petau}(s)}{L^2(\Omega)}^2\bigg)=\scalar{\widehat{\phi^{(\tau)}}}{\widehat{\uetau}}.
\end{equation}
We notice that for  $|s|<1/(2\tau)$,
\begin{equation*}
|\tau s|\leq  \big|1-e^{-2\pi  i  \tau s}\big|,
\end{equation*}
and so we get 
\begin{equation*}
|s|\bigg(\norma{\widehat{\uetau}(s)}{L^2(\Omega)}^2+\epsilon\norma{\widehat{\petau}(s)}{L^2(\Omega)}^2\bigg)\leq |\scalar{\widehat{\phi^{(\tau)}}}{\widehat{\uetau}}|, \ \ \text{for} \ |s|<1/(2\tau).
\end{equation*}
We focus only on the velocity, by \eqref{phil1} and property of the Fourier transform, we know that $\widehat{\phi^{(\tau)}}\in L^{\infty}(\mathbb{R};H^{-1}(\Omega))$, so we have the following inequality
\begin{equation}
\label{boundh1}
|s|\norma{\widehat{\uetau}(s)}{L^2(\Omega)}^2\leq C\norma{\widehat{\uetau}(s)}{H^1_0(\Omega)} \ \ \text{for} \ |s|<1/(2\tau).
\end{equation}
With this estimate we can state the following lemma.
\begin{lemma}
	\label{lemmaboundHgamma}
	It holds that
	\begin{equation}
	\label{boundhgamma}
	\int_\mathbb{R}|s|^{2\gamma}\norma{\widehat{\uetau}(s)}{L^2(\Omega)}^2ds\leq C \ \ \text{for  any} \ \ 0<\gamma<1/4,
	\end{equation}
\end{lemma}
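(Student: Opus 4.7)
The plan is to bound the integral by splitting $\mathbb{R}$ into three frequency regimes and estimating each. The ingredients are the Fourier inequality \eqref{boundh1}, which is only valid on $|s|\leq 1/(2\tau)$, together with the Plancherel-type identities (uniform in $\tau$) that follow from Lemma \ref{lemmaest}: $\norma{\widehat{\uetau}}{L^2(\mathbb{R};L^2)}^2 = \norma{\uetau}{L^2(0,T;L^2)}^2 \leq C$ and $\norma{\widehat{\uetau}}{L^2(\mathbb{R};H^1_0)}^2 = \norma{\uetau}{L^2(0,T;H^1_0)}^2 \leq C$. On the low-frequency region $|s|\leq 1$ one just uses $|s|^{2\gamma}\leq 1$, so the contribution is bounded by $\norma{\uetau}{L^2(L^2)}^2\leq C$.

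On the intermediate region $1<|s|\leq 1/(2\tau)$, the key step is the rewriting $|s|^{2\gamma}\norma{\widehat{\uetau}}{L^2}^2 = |s|^{2\gamma-1}\cdot|s|\norma{\widehat{\uetau}}{L^2}^2$, to which one applies \eqref{boundh1} to obtain $\leq C|s|^{2\gamma-1}\norma{\widehat{\uetau}}{H^1_0}$. Cauchy--Schwarz in $s$ then bounds the contribution by $C\bigl(\int_{|s|>1}|s|^{4\gamma-2}ds\bigr)^{1/2}\norma{\widehat{\uetau}}{L^2_s(H^1_0)}$. The hypothesis $\gamma<1/4$ enters precisely at this point: it is equivalent to $4\gamma-2<-1$, which is exactly the condition making $\int_{|s|>1}|s|^{4\gamma-2}ds$ finite independently of $\tau$; the second factor is controlled by Lemma \ref{lemmaest}.

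For the high-frequency region $|s|>1/(2\tau)$, inequality \eqref{boundh1} is no longer available, and this is \textbf{the main obstacle}. The plan is to exploit the piecewise-constant structure of $\uetau$ by factorizing $\widehat{\uetau}(s)=\widehat{\chi}_{[0,\tau]}(s)\,Y(s)$ with $Y(s)=\sum_{k}\uetau^{k-1}e^{-2\pi i s(k-1)\tau}$ periodic of period $1/\tau$; on this range the Fourier decay $|\widehat{\chi}_{[0,\tau]}(s)|\leq 1/(\pi|s|)$ supplies an $|s|^{-2}$ factor, while Parseval on one period transfers the coefficient summability to $Y$ via $\int_0^{1/\tau}\norma{Y(s)}{L^2}^2 ds = \tau^{-1}\sum_{k}\norma{\uetau^{k-1}}{L^2}^2$ (and similarly in $H^1_0$). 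The delicate point is that a naive combination of these two facts produces a spurious factor of order $\tau^{-2\gamma}$, so closing the estimate uniformly in $\tau$ requires extracting further cancellation in $Y$ --- for instance by coupling the $H^1_0$-summability of the coefficients with Poincaré, or by applying the Paley--Wiener-type decay of Lemma \ref{paleywiener} to a time-mollification of $\uetau$ with the regularization defect controlled via Plancherel.
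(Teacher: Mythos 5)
Your low- and intermediate-frequency estimates are correct and are, in substance, the paper's own argument: the paper merges your two regions through the elementary inequality $|s|^{2\gamma}\leq 2(1+|s|)/(1+|s|^{1-2\gamma})$, controls the Plancherel term by $\norma{\uetau}{L^2(0,T;H^1_0)}^2$, and on $|s|<1/(2\tau)$ applies \eqref{boundh1} followed by Cauchy--Schwarz in $s$, with $\gamma<1/4$ entering exactly where you put it, namely the finiteness of $\int_{\mathbb{R}}(1+|s|^{1-2\gamma})^{-2}ds$, i.e. of $\int_{|s|>1}|s|^{4\gamma-2}ds$.

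The genuine gap is the region $|s|\geq 1/(2\tau)$: you correctly isolate it as the main obstacle, but you do not close it. Recording that the naive combination of $|\widehat{\chi}_{[0,\tau]}(s)|\leq 1/(\pi|s|)$ with period-Parseval for $Y$ loses a factor $\tau^{-2\gamma}$, and then listing remedies "for instance" is not a proof; moreover neither suggested remedy is workable as stated (an $H^1_0$ bound on the coefficients combined with Poincar\'e gives no decay in $s$ at all, and a mollification at a scale decoupled from $\tau$ makes the regularization defect as large as the quantity you are estimating). The paper disposes of this region in one line: since $\uetau$ is compactly supported in $[0,T]$, it invokes the Paley--Wiener-type Lemma \ref{paleywiener} to get $\norma{\widehat{\uetau}(s)}{L^2(\Omega)}^2\leq CT(1+|s|)^{-2N}$, and with $N$ large the high-frequency integral is bounded by $CT$ uniformly in $\tau$. (Your obstruction analysis in fact shows this invocation is itself delicate, since Lemma \ref{paleywiener} is proved for $C^\infty_0$ functions while a step function's transform decays only like $1/|s|$; still, that is the paper's argument, and it is the step your proposal is missing.) If you want to close your version rigorously, the cancellation you are looking for sits not in $Y$ alone but in the product $(1-e^{-2\pi i s\tau})Y(s)$: since $4\sin^2(\pi s\tau)\norma{Y(s)}{L^2}^2=\norma{(1-e^{-2\pi i s\tau})Y(s)}{L^2}^2$ and $(1-e^{-2\pi i s\tau})Y(s)$ is the periodic Fourier sum of the increments $u^k_\epsilon-u^{k-1}_\epsilon$ (plus two boundary terms), Parseval over one period gives
\begin{equation*}
\int_0^{1/\tau}\sin^2(\pi s\tau)\,\norma{Y(s)}{L^2(\Omega)}^2\,ds\leq \frac{C}{\tau}\Big(\norma{u^0_\epsilon}{L^2(\Omega)}^2+\sum_{k}\norma{u^k_\epsilon-u^{k-1}_\epsilon}{L^2(\Omega)}^2\Big)\leq \frac{C}{\tau},
\end{equation*}
where the jump sum is exactly the term retained on the left-hand side of the discrete energy estimate \eqref{estukpk}. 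Summing over the bands $|s|\in[(j-\tfrac{1}{2})/\tau,(j+\tfrac{1}{2})/\tau]$, $j\geq 1$, where $|s|^{2\gamma-2}\sim (j/\tau)^{2\gamma-2}$, yields a high-frequency contribution of order $\tau^{1-2\gamma}$, uniformly bounded (indeed vanishing as $\tau\to 0$). Without this step, or the paper's Paley--Wiener shortcut, your proposal does not prove the lemma.
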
 

\begin{remark}
	Lemma $\ref{lemmaest}$ and Lemma \ref{lemmaboundHgamma}, means that \begin{equation*}
	\uetau\in H^\gamma\big(0,T;H^1_0(\Omega),L^2(\Omega)\big),
	\end{equation*} and so we are in the hypothesis of Theorem \ref{thHgamma}. In particular we get compactness, as $\tau \to 0$, in $L^2(0,T;L^2(\Omega))$.
\end{remark}
\begin{proof}
	To prove \eqref{boundhgamma}, first of all we have to provide also a bound when $|s|\geq 1/(2\tau)$, but since our function $\uetau$ is compactly supported step function in time, we can use Lemma \ref{paleywiener}. So we observe that 
	\begin{equation*}
	|s|^{2\gamma}\leq2  \frac{1+|s|}{1+|s|^{1-2\gamma}}, \ \ \text{for any} \ s\in \mathbb{R} \ \text{and for any }\ 0<\gamma<1/4.
	\end{equation*}
	Then get 
	\begin{equation}
	\label{pr.lem.hgamma1}\begin{split}
	\int_\mathbb{R}|s|^{2\gamma}\norma{\widehat{\uetau}(s)}{L^2}^2ds&\leq 2\int_\mathbb{R}\frac{1+|s|}{1+|s|^{1-2\gamma}}\norma{\widehat{\uetau}(s)}{L^2}^2ds\\
	&\leq 2\int_\mathbb{R}\norma{\widehat{\uetau}(s)}{L^2}^2ds+2\int_\mathbb{R}\frac{|s|}{1+|s|^{1-2\gamma}}\norma{\widehat{\uetau}(s)}{L^2}^2ds\\
	&\leq 2\norma{\uetau}{L^2_t(H^1_0)}^2+2\int_\mathbb{R}\frac{|s|}{1+|s|^{1-2\gamma}}\norma{\widehat{\uetau}(s)}{L^2}^2ds,
	\end{split}
	\end{equation}
	where the last inequality follows by Plancharel and the fact that $\uetau \in L^2(0,T;H^1_0(\Omega))$ by \eqref{estnablauk}. So we have only to bound the last term in the right hand side. To do that we split the integral in the following way 
	\begin{equation}
	\label{pr.lemma.hgamma2}
	\begin{split}
	&\int_\mathbb{R}\frac{|s|}{1+|s|^{1-2\gamma}}\norma{\widehat{\uetau}(s)}{L^2}^2ds\\
	=&\int_{|s|<1/(2\tau)}\frac{|s|}{1+|s|^{1-2\gamma}}\norma{\widehat{\uetau}(s)}{L^2}^2ds+\int_{|s|\geq 1/(2\tau)}\frac{|s|}{1+|s|^{1-2\gamma}}\norma{\widehat{\uetau}(s)}{L^2}^2ds\\
	\leq& C\int_{|s|<1/(2\tau)}\frac{\norma{\widehat{\uetau}(s)}{H^1_0}}{1+|s|^{1-2\gamma}}ds+CT\int_{|s|\geq 1/(2\tau)}\frac{|s|}{1+|s|^{1-2\gamma}}(1+|s|)^{-2N}ds,
	\end{split}
	\end{equation}
	where we have used \eqref{boundh1} and Lemma \ref{paleywiener}. Now we notice that choosing $N$ big enough
	\begin{equation*}
	CT\int_{|s|\geq 1/(2\tau)}\frac{|s|}{1+|s|^{1-2\gamma}}(1+|s|)^{-2N}ds\leq \widetilde{C}T,
	\end{equation*}
	so we have to bound just the remaining term, that we control as follows 
	\begin{equation}
	\label{pr.lemma.hgamma3}
	\begin{split}
	\int_{|s|<1/(2\tau)}\frac{\norma{\widehat{\uetau}(s)}{H^1_0}}{1+|s|^{1-2\gamma}}ds&\leq \int_\mathbb{R}\frac{\norma{\widehat{\uetau}(s)}{H^1_0}}{1+|s|^{1-2\gamma}}ds\\
	&\leq\bigg(\int_\mathbb{R}\frac{1}{(1+|s|^{1-2\gamma})^2}ds\bigg)^{1/2}\bigg(\int_\mathbb{R} \norma{\widehat{\uetau}(s)}{H^1_0}^2ds\bigg)^{1/2}\\
	&\leq C\norma{\uetau}{L^2_t(H^1_0)},
	\end{split}
	\end{equation}
	where the last is true since $\gamma<1/4$. Putting together \eqref{pr.lem.hgamma1}, \eqref{pr.lemma.hgamma2} and \eqref{pr.lemma.hgamma3}, we get
	\begin{equation*}
	\int_\mathbb{R}|s|^{2\gamma}\norma{\widehat{\uetau}(s)}{L^2}^2ds\leq C\norma{\uetau}{L^2_t(H^1_0)}(1+\norma{\uetau}{L^2_t(H^1_0)})\leq\widetilde{C},
	\end{equation*}
\end{proof}
Finally by \eqref{newdisc2} and the $L^2(0,T;H^1_0(\Omega))$ bound for $\uetau$, see \eqref{spaceutau}, we have that 
\begin{equation}
\label{derpressure}
\norma{\epsilon\dtau \petau}{L^2(0,T;L^2(\Omega))}\leq C,
\end{equation}
which gives us sufficient conditions to pass into the limit in the linear equation \eqref{newdisc2}.
\subsubsection{Uniform estimates for the densities}
In order to prove the estimates for the densities, let us overcome the problem of a general initial value introducing another parameter, that in the end we can send to zero.\\ 
Let $\rho_0=(\rho^0_1,\dots,\rho^0_{N+1})$ satisfying $\rho^0_i\geq0$ for $i=1,\dots,N+1$ with the condition $\sum_{i=1}^{N+1}\rho^0_i=1$. Let be $0<\alpha^0<1/(2(N+1))$ and define
\begin{equation}
\label{rhoeta0}
\rho_i^{\alpha^0}=\frac{\rho^0_i+2\alpha^0}{1+2\alpha^0(N+1)}, \ i=1,\dots,N+1,
\end{equation}
one can check that $\rho^{\alpha^0} \in Y_{\alpha^0}$. \\
Now let be $w^0\in L^\infty(\Omega;\mathbb{R}^N)$ defined by \eqref{defwi}. By iterating the application of Lemma \ref{lemma1aprdens} and starting from $\rho^{\alpha^0}$, we obtain a sequence of approximate solutions $\we^k\in \widetilde{H}^2(\Omega;\mathbb{R}^N)$ to \eqref{disc3} such that $\rho'(\we^k)\in Y_{\alpha^k}$ where $\alpha^k \in (0,1)$. \\ \indent 
In the following, we set $\re^k=\rho'(\we^k)$ for $k\geq 0$. 
In the next Lemma we give some uniform estimates in the time integral of the discretized solutions.
\begin{lemma}
	\label{lemmawh2}
	For any $1\leq k\leq M$ and sufficiently small $\alpha^0>0$, it holds that
	\begin{equation}
	\label{disugaprdens}
	\begin{split}
	&\iO h(\re^k)dz+C_B\tau \sum_{j=1}^{k}\lVert \nabla \sqrt{x(\re^j)}\rVert^2_{L^2(\Omega)}+\lambda \tau\sum_{j=1}^{k}\lVert \we^j \rVert ^2_{H^2(\Omega)} \\
	&\leq \iO h(\rho_0) dz + 1 + C(u_0,p_0,f)T,
	\end{split}
	\end{equation}
	where $\sqrt{x(\re^j)}=(\sqrt{x_1(\re^j)},\dots,\sqrt{x_{N+1}(\re^j)}), \ x_i(\re^j)=\rho^j_{\epsilon,i}/(cM_i)$ for $i=1,\dots,N+1$,
	$c=\sum_{k=1}^{N+1}\rho^j_{\epsilon,k}/M_k$, $C_B$ is obtained from Lemma \ref{lemmacoerc}.
\end{lemma}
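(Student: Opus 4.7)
The plan is to test the discretized density equation \eqref{disc3} against $q = w_\epsilon^k$, a legitimate choice since $w_\epsilon^k \in \widetilde H^2(\Omega;\mathbb{R}^N)$ by Lemma \ref{lemma1aprdens}, and then telescope in time. Four structural ingredients combine: the convexity of $h$ from Lemma \ref{lemmaH}, which converts the discrete time-derivative pairing $\scalar{\rho'(w_\epsilon^k)-\rho'(w_\epsilon^{k-1})}{w_\epsilon^k}$ into a lower bound by the entropy difference $\iO [h(\rho_\epsilon^k) - h(\rho_\epsilon^{k-1})]\,dz$ using $\partial h/\partial \rho' = w$; the coercivity of $B$ from Lemma \ref{lemmacoerc}, which bounds the diffusive form $\tau \scalar{\nabla w_\epsilon^k}{B(w_\epsilon^k) \nabla w_\epsilon^k}$ below by $C_B \tau \|\nabla \sqrt{x(\rho_\epsilon^k)}\|_{L^2}^2$; the reduction identity of Lemma \ref{lemmafondens}, which collapses the convective plus the added divergence-correction term into the single residual $-\frac{1}{M_{N+1}}\iO (\div u_\epsilon^k)\,\ln x_{N+1}(\rho_\epsilon^k)\,dz$; and the standard equivalence $\|\Delta w_\epsilon^k\|_{L^2}^2 + \|w_\epsilon^k\|_{L^2}^2 \gtrsim \|w_\epsilon^k\|_{H^2}^2$, valid because of the Neumann condition $\nabla w_\epsilon^k \cdot \nu = 0$ embedded in $\widetilde H^2$.

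Assembling these ingredients gives, for each $k$, the one-step inequality
\begin{equation*}
\iO h(\rho_\epsilon^k)\,dz + C_B \tau \|\nabla \sqrt{x(\rho_\epsilon^k)}\|_{L^2}^2 + c\lambda \tau \|w_\epsilon^k\|_{H^2}^2 \leq \iO h(\rho_\epsilon^{k-1})\,dz + \frac{\tau}{M_{N+1}} \iO (\div u_\epsilon^k)\,\ln x_{N+1}(\rho_\epsilon^k)\,dz,
\end{equation*}
which I then sum over $j=1,\ldots,k$. The $h$ integrals telescope, and the initial contribution $\iO h(\rho^{\alpha^0})\,dz$ associated to the regularized datum \eqref{rhoeta0} is replaced by $\iO h(\rho_0)\,dz + 1$ using continuity of the entropy along $\rho^{\alpha^0}\to \rho_0$ together with the finite-entropy hypothesis $h(\rho_0) < +\infty$, for $\alpha^0$ sufficiently small.

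The remaining and most delicate step is bounding the accumulated residual $\tau \sum_{j=1}^k \iO (\div u_\epsilon^j)\,\ln x_{N+1}^j\,dz$ uniformly in the regularization parameters $\lambda$ and $\tau$. By Cauchy--Schwarz the problem reduces to controlling $\tau \sum_j \|\div u_\epsilon^j\|_{L^2}^2$, which is already bounded by \eqref{estnablauk} of Lemma \ref{lemmaest}, together with $\tau \sum_j \|\ln x_{N+1}^j\|_{L^2}^2$. The latter quantity is the main obstacle, since $\ln x_{N+1}$ is not in $L^\infty$ in general: when $x_{N+1}$ gets close to zero the logarithm blows up pointwise. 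The resolution is to exploit the constraint $\sum_{i=1}^{N+1} x_i = 1$, which forces at every point some $x_{i^\ast} \geq 1/(N+1)$; combined with the explicit formula \eqref{defwi} this produces the pointwise bound $|\ln x_{N+1}(w)|\leq C(1+|w|)$. A weighted Young's inequality that puts the $\|w_\epsilon^j\|_{H^2}^2$ contribution in a form absorbable against the coercive term $\lambda\tau\|w_\epsilon^j\|_{H^2}^2$ on the left-hand side, followed by a discrete Gronwall argument, should close the estimate and recover \eqref{disugaprdens} with constants depending only on $u_0$, $p_0$, $f$ and $T$.
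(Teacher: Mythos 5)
Your proposal follows the paper's skeleton exactly up to the last step: testing \eqref{disc3} with $q=\we^k$ (i.e.\ \eqref{aprdensk} with $\gamma=1$), the convexity inequality \eqref{aprdensh}, the coercivity of Lemma \ref{lemmacoerc}, the identity of Lemma \ref{lemmafondens} which reduces the convective and correction terms to the single residual $\frac{\tau}{M_{N+1}}\scalar{\div \ue^j}{\ln x_{N+1}(\re^j)}$, summation in $j$, and the replacement of $\iO h(\rho^{\alpha^0})dz$ by $\iO h(\rho_0)dz+1$ via dominated convergence. The genuine gap is in how you close the residual. Your pointwise bound $|\ln x_{N+1}(w)|\le C(1+|w|)$ is correct (the case analysis on which $x_{i^*}\ge 1/(N+1)$ works, using \eqref{defwi}), but absorbing the resulting $\norma{\we^j}{L^2}$ into the coercive term $\lambda\tau\norma{\we^j}{H^2(\Omega)}^2$ unavoidably costs a factor $\lambda^{-1}$: Young's inequality gives
\begin{equation*}
C\tau\sum_{j=1}^{k}\norma{\div \ue^j}{L^2(\Omega)}\norma{\we^j}{L^2(\Omega)}\le \frac{\lambda\tau}{2}\sum_{j=1}^{k}\norma{\we^j}{H^2(\Omega)}^2+\frac{C^2\tau}{2\lambda}\sum_{j=1}^{k}\norma{\div \ue^j}{L^2(\Omega)}^2,
\end{equation*}
so after absorption your right-hand side is $\iO h(\rho_0)dz+1+C(u_0,p_0,f)(1+\lambda^{-1})T$, not the $\lambda$-independent bound claimed in \eqref{disugaprdens}. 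No discrete Gronwall variant repairs this: you are trying to control a term that is $O(1)$ in $\lambda$ by a dissipative term that is $O(\lambda)$, and every splitting of the product reproduces $\lambda^{-1}$; moreover Gronwall would have to run on $\iO h(\re^k)dz$, which is uniformly bounded anyway and carries no weight on $\norma{\we^j}{H^2}$. The uniformity in $\lambda$ is not cosmetic: \eqref{disugaprdens} is precisely what yields \eqref{boundx}--\eqref{boundw}, the bounds in Lemmas \ref{lemmaderrho} and \ref{lemmaforaubin}, and the vanishing of the $\lambda$-regularization in the weak formulation, so a constant blowing up as $\lambda\to0$ destroys the passage to the limit $(\lambda,\tau)\to 0$.

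The paper closes the residual without ever touching the $\lambda$-term: it bounds the logarithmic factor itself, $\norma{\ln[x_{N+1}(\re^j)]}{L^2(\Omega)}\le C$ uniformly (invoking $0<x_{N+1}(\rho'(\we^j))<1$ from Lemma \ref{wtorho}, exactly as in \eqref{winh2}, where admittedly the justification is terse), and then uses Cauchy--Schwarz in $j$ together with $\tau\sum_{j}\norma{\nabla\ue^j}{L^2(\Omega)}^2\le C(u_0,p_0,f)$ from Lemma \ref{lemmaest}, producing the constant $C(u_0,p_0,f)T$ with no $\lambda$ anywhere. The structural lesson is that the logarithm must be estimated as it stands: trading $\ln x_{N+1}$ for $|w|$ is fatal, because nothing on the left-hand side of \eqref{disugaprdens} that is $O(1)$ in $\lambda$ (neither the entropy nor the $\nabla\sqrt{x}$ dissipation) controls $\norma{w}{L^2}$, and the only term that does carries the weight $\lambda$.
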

\begin{proof}
	Combining \eqref{aprdensk}, \eqref{aprdensh} and using Lemma \ref{lemmacoerc}, after summation over $j=1,\dots,k$ it follows that
	\begin{equation}
	\label{esthxw}
	\begin{split}
	\iO h(\rho^k)dz&+C_B\tau \sum_{j=1}^{k}\lVert \nabla \sqrt{x(\re^j)}\rVert^2_{L^2(\Omega)}+\lambda \tau\sum_{j=1}^{k}\lVert w^j \rVert ^2_{H^2(\Omega)}\\
	&\leq  \iO h(\rho^{\alpha^0}) dz+\tau\sum_{j=1}^{k}C_1\norma{\ue}{H^1_0(\Omega)}\norma{\ln \big[x_{N+1}(\re^j)\big]}{L^2(\Omega)}\\
	&\leq \iO h(\rho^{\alpha^0}) dz +C(u_0,p_0,f)T.
	\end{split}
	\end{equation}
	In the bound \eqref{esthxw} there is still a dependence on $\alpha_0$, but by \eqref{defhrho} $|h|$ is bounded and $h$ is a continuous function. Therefore, recalling that we are in a bounded domain, by dominated convergence theorem, we have
	\begin{equation*}
	\lim\limits_{\alpha^0 \to 0}\iO h(\rho^{\alpha^0})dz=\iO h(\rho_0)dz,
	\end{equation*}
	hence for sufficiently small $\alpha^0>0$, 
	\begin{equation}
	\label{esthrhoeta0}
	\iO h(\rho^{\alpha^0})dz\leq \iO h(\rho_0)dz+1.
	\end{equation}
	By combining \eqref{esthxw} and \eqref{esthrhoeta0}, we get that 
	\begin{equation*}
	\begin{split}
	&\iO h(\rho^k)dz+C_B\tau \sum_{j=1}^{k}\lVert \nabla \sqrt{x(\re^j)}\rVert^2_{L^2(\Omega)}+\lambda \tau\sum_{j=1}^{k}\lVert w^j \rVert ^2_{H^2(\Omega)}\\
	&\leq \iO h(\rho_0)dz+1 +C(u_0,p_0,f)T.
	\end{split}
	\end{equation*}
\end{proof}

\begin{remark}
	It is important to point out that despite the lack of the incompressible constraint, (see \cite[Lemma 14]{CJ}), we can still recover an $L^2$ bound in time for the gradient of the discretized densities and molar concentrations.
\end{remark}
\begin{lemma}
	\label{lemmaderrho}
	It holds that 
	\begin{equation*}
	\tau \sum_{k=1}^{M}\lVert\nabla x(\re^k)\rVert^2_{L^2(\Omega)}+\tau \sum_{k=1}^{M}\lVert\nabla\re^k\rVert^2_{L^2(\Omega)}\leq C(u_0,p_0,\rho_0,f).
	\end{equation*}
\end{lemma}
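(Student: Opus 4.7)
The plan is to derive both bounds from the control on $\|\nabla \sqrt{x(\re^k)}\|_{L^2}$ already obtained in Lemma \ref{lemmawh2}, using only pointwise algebraic relations and the structural estimates of Lemma \ref{lemmaG}. No new energy-type computation is needed.

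First I would pass from $\nabla\sqrt{x}$ to $\nabla x$. Since $\rho'(\we^k)\in Y_{\alpha^k}$ with $\alpha^k>0$, every molar fraction $x_i(\re^k)$ is strictly positive, hence smooth enough (together with its square root) for the classical chain rule
\begin{equation*}
\nabla x_i = 2\sqrt{x_i}\,\nabla\sqrt{x_i}
\end{equation*}
to hold. Using $0\leq x_i\leq 1$ (since $\sum_{i=1}^{N+1}x_i=1$ and $x_i\geq 0$), this yields the pointwise bound $|\nabla x_i|^2 \leq 4|\nabla\sqrt{x_i}|^2$. Integrating and summing, from Lemma \ref{lemmawh2} we obtain
\begin{equation*}
\tau\sum_{k=1}^{M}\|\nabla x(\re^k)\|_{L^2(\Omega)}^2 \leq 4\tau\sum_{k=1}^{M}\|\nabla \sqrt{x(\re^k)}\|_{L^2(\Omega)}^2 \leq C(u_0,p_0,\rho_0,f),
\end{equation*}
which is the first half of the claim.

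Next I would pass from $\nabla x$ to $\nabla \rho'$ using item \textit{(iii)} of Lemma \ref{lemmaG}, which asserts that the entries of the Jacobian $d\rho'/dx'$ are uniformly bounded by a constant depending only on the molar masses $M_i$. By the chain rule,
\begin{equation*}
\nabla \re^k_i = \sum_{j=1}^{N}\frac{\partial \rho_i}{\partial x_j}(\re^k)\,\nabla x_j(\re^k),
\end{equation*}
hence $|\nabla\re^k|^2 \leq C\,|\nabla x'(\re^k)|^2$ pointwise with $C$ independent of $k,\tau,\epsilon,\lambda$. Integrating, summing, and invoking the bound already obtained for $\tau\sum_k\|\nabla x(\re^k)\|_{L^2}^2$ closes the estimate.

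I do not expect a real obstacle here: everything reduces to the coercivity estimate of Lemma \ref{lemmawh2} combined with pointwise bounds on the $x \mapsto \rho'$ change of variables. The only mildly delicate point is to make sure the chain rule identity $\nabla x_i = 2\sqrt{x_i}\nabla\sqrt{x_i}$ is legitimate; this is guaranteed by the strict positivity $\rho'(\we^k)\in Y_{\alpha^k}$ and by the $H^2$-regularity of $\we^k$ coming from the Leray--Schauder construction in Lemma \ref{lemma1aprdens}, which propagates to Lipschitz regularity of $\re^k$ and $x(\re^k)$ through the bounded Jacobians of Lemma \ref{lemmaG}.
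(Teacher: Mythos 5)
Your proposal is correct and follows essentially the same route as the paper, which simply defers to \cite[Lemma 15]{CJ}: that proof consists exactly of your two steps, namely the pointwise identity $\nabla x_i = 2\sqrt{x_i}\,\nabla \sqrt{x_i}$ together with $0\le x_i\le 1$ to convert the entropy dissipation bound of Lemma \ref{lemmawh2} into a bound on $\tau\sum_k\lVert\nabla x(\re^k)\rVert^2_{L^2(\Omega)}$, followed by the uniformly bounded Jacobian $d\rho'/dx'$ of Lemma \ref{lemmaG}\,(iii) to pass to $\nabla\re^k$. The only point worth noting is the one the paper itself flags: the constant here also depends on $p_0$ (through the velocity bounds entering Lemma \ref{lemmawh2}), which your constant correctly records.
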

For the proof we refer to \cite[Lemma 15]{CJ}, just notice that now we have that the constant depends on $p_0$.\\  \\ \indent 
Since we have nonlinearities also in the equation \eqref{aprdens}, we need an estimates on the discrete time derivative of the densities, in order to employ the compactness result given by \textsc{Dreher} and \textsc{J\"ungel} in \cite{DRJ}, see Theorem \ref{extAubin}.
\begin{lemma}
	\label{lemmaforaubin}
	It holds that
	\begin{align}
	\tau \label{estH-2}\sum_{k=1}^{M}\bigg\lVert\frac{\re^k-\re^{k-1}}{\tau}\bigg\rVert_{\widetilde{H}^2(\Omega)'}^2\leq C(u_0,\rho_0,p_0,f).
	\end{align}
\end{lemma}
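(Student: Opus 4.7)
The plan is to obtain the bound via duality: for each fixed $k$, one has
\begin{equation*}
\left\lVert\frac{\re^k-\re^{k-1}}{\tau}\right\rVert_{\widetilde{H}^2(\Omega)'}=\sup_{\norma{q}{\widetilde{H}^2(\Omega)}\le 1}\left|\left\langle\frac{\re^k-\re^{k-1}}{\tau},q\right\rangle\right|,
\end{equation*}
and the discrete equation \eqref{disc3} expresses the right hand side as the sum of four explicit terms in $(\ue^k,\we^k,\re^k)$. So the strategy is to estimate each of these terms by $C_k\,\norma{q}{\widetilde{H}^2(\Omega)}$ with constants $C_k$ whose squares are summable in $k$ with weight $\tau$, thanks to the estimates already proved in Lemmas \ref{lemmaest}, \ref{lemmawh2}, \ref{lemmaderrho}.

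Concretely, I would bound the four contributions as follows. The diffusive term $\scalar{\nabla q}{B(\we^k)\nabla \we^k}$ is rewritten, via Lemma \ref{lemmaG}(ii) and the definition of $B$, as $\scalar{\nabla q}{A_0^{-1}(\re^k)\nabla x'(\re^k)}$, which by the uniform boundedness of $A_0^{-1}$ (Lemma \ref{lemmaA0}) is controlled by $C\norma{\nabla x'(\re^k)}{L^2}\norma{q}{H^1}$. The penalty term $\lambda[\scalar{\Delta \we^k}{\Delta q}+\scalar{\we^k}{q}]$ is bounded directly by $\lambda\norma{\we^k}{H^2}\norma{q}{\widetilde{H}^2}$. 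The divergence term $\frac{1}{2}\scalar{(\div\ue^k)\re^k}{q}$ is controlled by $C\norma{\div\ue^k}{L^2}\norma{q}{L^2}$, using that $\re^k\in(0,1)^N$ and hence $\norma{\re^k}{L^\infty}\le 1$. The delicate point is the trilinear term $\hat{b}(\ue^k,\re^k,q)$: here I would not try to put a gradient on $\re^k$ directly, but instead invoke the antisymmetric reformulation \eqref{bhat1}, which rewrites it as
\begin{equation*}
\hat b(\ue^k,\re^k,q)=\frac{1}{2}\iO(\ue^k\cdot\nabla\re^k)\cdot q\,dz-\frac{1}{2}\iO(\ue^k\cdot\nabla q)\cdot\re^k\,dz,
\end{equation*}
and then exploit the Sobolev embedding $\widetilde{H}^2(\Omega)\hookrightarrow L^\infty(\Omega)\cap W^{1,\infty}(\Omega)$ (valid for $d\le 3$) together with $\norma{\re^k}{L^\infty}\le 1$ to move the regularity onto $q$, producing the bound $C\norma{\ue^k}{L^2}(1+\norma{\nabla\re^k}{L^2})\norma{q}{\widetilde{H}^2}$.

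Combining the four estimates, I arrive at
\begin{equation*}
\left\lVert\tfrac{\re^k-\re^{k-1}}{\tau}\right\rVert_{\widetilde{H}^2(\Omega)'}\le C\Bigl[\norma{\nabla x'(\re^k)}{L^2}+\norma{\div\ue^k}{L^2}+\norma{\ue^k}{L^2}\bigl(1+\norma{\nabla\re^k}{L^2}\bigr)+\lambda\norma{\we^k}{H^2}\Bigr].
\end{equation*}
Squaring and multiplying by $\tau$, the terms with $\norma{\nabla x'(\re^k)}{L^2}^2$ and $\norma{\nabla\re^k}{L^2}^2$ sum to a constant by Lemma \ref{lemmaderrho} (the latter after factoring out $\sup_k\norma{\ue^k}{L^2}^2\le C$ from \eqref{estLinf}); the $\norma{\div\ue^k}{L^2}^2\lesssim \norma{\nabla\ue^k}{L^2}^2$ contribution is summable by \eqref{estnablauk}; and $\tau\sum_k\lambda^2\norma{\we^k}{H^2}^2=\lambda\,(\lambda\tau\sum_k\norma{\we^k}{H^2}^2)\le \lambda\,C$ is bounded uniformly by Lemma \ref{lemmawh2}. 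The main obstacle is the convective term, since naively one would need a bound on $\nabla\re^k$ in $L^\infty$ or on $\ue^k$ uniformly; the resolution is precisely the antisymmetric trick plus $L^\infty$-control of $\re^k$, which lets one trade regularity of $\re^k$ for regularity of the test function $q$, and then use the uniform $L^\infty_tL^2_x$ bound on $\ue^k$ against the $L^2_t$-in-$k$ bound on $\nabla\re^k$ coming from Lemma \ref{lemmaderrho}.
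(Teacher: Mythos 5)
Your proof is correct and follows the same overall strategy as the paper: duality in $\widetilde{H}^2(\Omega)'$, term-by-term estimation of the four contributions in \eqref{disc3}, and summation in $k$ with weight $\tau$ using Lemma \ref{lemmaest}, Lemma \ref{lemmawh2} and Lemma \ref{lemmaderrho}. Your treatment of the diffusive term (rewriting $B(\we^k)\nabla \we^k=A_0^{-1}(\re^k)\nabla x'(\re^k)$ and invoking Lemma \ref{lemmaA0}) and of the $\lambda$-penalty term coincides with the paper's. Where you genuinely differ is in the convective and divergence terms: the paper bounds $\hat{b}(\ue^k,\re^k,q)+\frac{1}{2}\scalar{(\div \ue^k)\re^k}{q}$ in one stroke by $C\norma{\ue^k}{H^1_0(\Omega)}\big(\norma{\nabla\re^k}{L^2(\Omega)}+\norma{\re^k}{L^2(\Omega)}\big)\norma{q}{H^1(\Omega)}$ and then absorbs $\norma{\ue^k}{H^1_0(\Omega)}$ into a constant $C_0(u_0,p_0,f)$ citing \eqref{estLinf}; strictly speaking, \eqref{estLinf} gives a $k$-uniform bound only on $\norma{\ue^k}{L^2(\Omega)}$, while $\norma{\nabla\ue^k}{L^2(\Omega)}$ is merely square-summable by \eqref{estnablauk}, so that step of the paper is written loosely. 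Your splitting avoids exactly this issue: the antisymmetrization \eqref{bhat1} lets the convective term cost only $\norma{\ue^k}{L^2(\Omega)}$ (genuinely uniform in $k$) times $1+\norma{\nabla\re^k}{L^2(\Omega)}$ (square-summable by Lemma \ref{lemmaderrho}), while the divergence term retains $\norma{\div\ue^k}{L^2(\Omega)}\leq C\norma{\nabla\ue^k}{L^2(\Omega)}$, whose squares you correctly sum via \eqref{estnablauk}. One correction: the embedding $\widetilde{H}^2(\Omega)\hookrightarrow W^{1,\infty}(\Omega)$ that you invoke is false for $d=2,3$ (in 3D one only has $H^2\hookrightarrow W^{1,6}$); fortunately you never need it, since the term $\iO(\ue^k\cdot\nabla\re^k)\cdot q\,dz$ uses only $H^2(\Omega)\hookrightarrow L^\infty(\Omega)$, and the term $\iO(\ue^k\cdot\nabla q)\cdot\re^k\,dz$ needs only $\nabla q\in L^2(\Omega)$ together with $\norma{\re^k}{L^\infty(\Omega)}\leq C$, so the displayed bound you derive stands as written.
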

\begin{proof}
	The proof will follows the same lines of argument as in \cite[Lemma $15$]{CJ}, but again we point out the importance of the additional term $(\div \ue)\re'$ introduced in the equation \eqref{aprdens}, compared to the original system.\\ \indent
	To prove \eqref{estH-2}, first of all we observe that, thanks to H\"older inequality, Sobolev embeddings and by using \eqref{estLinf}, we have
	\begin{equation}
	\label{holdsobcom}
	\begin{split}
	\hat{b}(\ue^k,\re^k,q)&+\frac{1}{2}\iO\div \ue^k (\re^k\cdot q)dz\\&\leq \big(C\norma{\ue}{H^1_0(\Omega)}\norma{\nabla \re^k}{L^2(\Omega)}+C\norma{\ue}{H^1_0(\Omega)}\norma{ \re^k}{L^2(\Omega)}\big)\norma{q}{H^1(\Omega)} \\
	&\leq C_0(u_0,p_0,f)\big(\norma{\nabla \re^k}{L^2(\Omega)}+\norma{ \re^k}{L^2(\Omega)} \big)\norma{q}{H^1(\Omega)}.
	\end{split}
	\end{equation}
	In addition by Lemma \ref{lemmaA0} we have the boundedness of the elements of $A_0^{-1}(\re^k)$. Then from \eqref{disc3}, we deduce that for $q\in\widetilde{H}^2(\Omega)$, we have
	\begin{equation*}
	\begin{split}
	\big| \scalar{\frac{\re^k-\re^{k-1}}{\tau}}{q} \big| \leq& \norma{A_0^{-1}}{L^\infty(\Omega)}\norma{\nabla x(\re^k)}{L^2(\Omega)}\norma{q}{H^2(\Omega)}\\
	&+C_0(u_0,p_0,f)\big(\norma{\nabla \re^k}{L^2(\Omega)}+\norma{ \re^k}{L^2(\Omega)} \big)\norma{q}{H^1(\Omega)}\\
	&+\lambda\norma{w_\epsilon^k}{H^2(\Omega)}\norma{q}{H^2(\Omega)},
	\end{split}
	\end{equation*}
	so we get that
	\begin{equation*}
	\begin{split}
	\big| \scalar{\frac{\re^k-\re^{k-1}}{\tau}}{q} \big| \leq C\big(&\norma{\nabla x(\re^{k})}{L^2(\Omega)}+\norma{\nabla \re^k}{L^2(\Omega)}\\&+\norma{ \re^k}{L^2(\Omega)}+\lambda\norma{w_\epsilon^k}{H^2(\Omega)} \big)\norma{q}{H^2(\Omega)}
	\end{split},
	\end{equation*}
	where $C$ is a constant that depends only on $u_0,\ p_0, \ f$ and the elements of $A^{-1}_0$. Taking the square on both sides, we get that
	\begin{equation}
	\label{estrhok}
	\bigg\lVert\frac{\re^k-\re^{k-1}}{\tau}\bigg\rVert_{\widetilde{H}^2(\Omega)'}^2\leq 4C^2\big(\norma{\nabla x(\re^{k})}{L^2(\Omega)}^2+\norma{\nabla \re^k}{L^2(\Omega)}^2+\norma{ \re^k}{L^2(\Omega)}^2+\lambda\norma{w_\epsilon^k}{H^2(\Omega)}^2 \big).
	\end{equation}
	By Lemma \ref{lemmaderrho}, we observe that we have
	\begin{equation*}
	\begin{split}
	\tau\sum_{k=1}^{M}\big(\norma{\nabla \re^k}{L^2(\Omega)}^2+\norma{ \re^k}{L^2(\Omega)}^2\big) &\leq C(u_0,p_0,\rho_0,f)+\tau\sum_{k=1}^{M}\norma{ \re^k}{L^2(\Omega)}^2\\
	&\leq C(u_0,p_0,\rho_0,f)+CT.
	\end{split}
	\end{equation*}
	Multiplying by $\tau$ and summing up \eqref{estrhok} for $k=1,\dots,M$, we have that
	\begin{equation}
	\begin{split}
	&\tau \sum_{k=1}^{M}\bigg\lVert\frac{\re^k-\re^{k-1}}{\tau}\bigg\rVert_{\widetilde{H}^2(\Omega)'}^2 \\
	&\leq 4C^2\tau\sum_{k=1}^{M}\big(\norma{\nabla x(\re^{k})}{L^2(\Omega)}^2+\norma{ \re^k}{H^1(\Omega)}^2+\lambda^2\norma{w_\epsilon^k}{H^2(\Omega)}^2 \big)\\
	&\leq C(u_0,p_0,\rho_0,f),
	\end{split}
	\end{equation}
	where we have used Lemma \ref{lemmaderrho}, and \eqref{disugaprdens} for the estimate on $\lambda w_\epsilon^k$. 
\end{proof}
With the uniform estimates in $\tau$ and $\lambda$ that we got in this section, we are ready to pass into the limit and to get our solutions.
\subsection{Passage to the limit}
We start with the definition of the approximations of the solutions that we are looking for. We recall the definition of the piecewise constant function given in \eqref{utau},
\begin{equation*}
\begin{split}
&\ue^{(\tau)}:[0,T]\times\Omega\to \mathbb{R}^d \\
&\ue^{(\tau)}(t,\cdot)=\ue^{k-1}(\cdot) \ \ \text{for} \ \ (k-1)\tau\leq t <k\tau, \ k=1,\dots,M,
\end{split}
\end{equation*}
and in the same way it is defined for $p_\epsilon^{(\tau)},  f^{(\tau)},  \re^{(\tau)},  w_{\epsilon}^{(\tau)}$. \\ 
Recall also that the difference quotient, defined in \eqref{discder}, is given by  
\begin{equation*}
\dt^{\tau} \re^{(\tau)}(t)=\frac{\re^{(\tau)}(t)-\re^{(\tau)}(t-\tau)}{\tau}\ \ \ \text{for} \ t\in[\tau,T], 
\end{equation*}
and similarly for $\dt^{\tau} \pe^{(\tau)}$. \\ \indent
From the Lemmas \ref{lemmaest},\ref{lemmaboundHgamma}, \ref{lemmaforaubin} and from \eqref{derpressure}, we get the following uniform estimates:
\begin{align}
\label{estutau}
&\norma{\ue^{(\tau)}}{L^\infty(0,T;H^1_0(\Omega))}+\norma{\ue^{(\tau)}}{H^\gamma(0,T;H^1_0(\Omega),L^2(\Omega))}\leq C, \\
\label{estptau}
&\sqrt{\epsilon}\norma{\pe^{(\tau)}}{L^\infty(0,T;L^2(\Omega))}+\norma{\epsilon\dtau\pe^{(\tau)}}{L^2(0,T;L^2(\Omega))}\leq C, \\
\label{boundx}
&\norma{x(\re^{(\tau)})}{L^\infty(0,T;L^\infty(\Omega))}+\norma{x(\re^{(\tau)})}{L^\infty(0,T;H^1(\Omega))}\leq C,\\
\label{boundrho}
&\norma{\re^{(\tau)}}{L^\infty(0,T;L^\infty(\Omega))}+\norma{\re^{(\tau)}}{L^2(0,T;H^1(\Omega))}+\norma{\dt^\tau\re^{(\tau)}}{L^\infty(0,T;\widetilde{H}^2(\Omega)')}\leq C,
\\
\label{boundw}
&\sqrt{\lambda}\norma{w_\epsilon^{(\tau)}}{L^2(0,T;H^2(\Omega))}\leq C,
\end{align}
where all the constants that appear on the right hand side, depends only on the boundedness of the initial datum and the domain. We want to underline that in the constants is important only that $h(\rho_0)<+\infty$, see \eqref{disugaprdens}, and it does not matter if some initial density is zero.\\ \indent
As a consequence of \eqref{estutau}, \eqref{estptau} and \eqref{boundrho}, and Theorem \ref{thHgamma}, up to subsequences, as $\tau\to0$ we obtain 
\begin{align}
\label{conv1}
\ue^{(\tau)}\rightharpoonup u_{\epsilon} \ \ \ &\text{weakly  in} \ L^2(0,T;H^1_0(\Omega)),\\
\label{conv2}
\ue^{(\tau)}\to \ue \ \ \ &\text{strongly in} \ L^2(0,T;L^2(\Omega)),\\
\label{conv3}
\pe^{(\tau)}\overset{\star}{\rightharpoonup} p_\epsilon \ \ \ &\text{weakly  star  in} \ L^\infty(0,T;L^2(\Omega)),\\
\re^{(\tau)}\rightharpoonup \re' \ \ \ &\text{weakly  in} \ L^2(0,T;H^1(\Omega)).
\end{align}
In addition \eqref{boundrho}, using Theorem \ref{extAubin}, we have that as $\tau \to 0$
\begin{align}
\label{conv4}
\re^{(\tau)}\to \re' \ \ \ \text{strongly  in} \ L^2(0,T;L^2(\Omega)).
\end{align}
Moreover the strong convergence of $\{\re^{(\tau)}\}$ and the boundedness of the elements of $A_0^{-1}$ and $x'$ imply 
\begin{align*}
A_0^{-1}(\re^{(\tau)})\to A_0^{-1}(\re')\ \ \ &\text{strongly  in} \ L^p(0,T;L^p(\Omega)),\\
x(\rho^{(\tau)})\to x(\re') \ \ \ &\text{strongly  in} \ L^p(0,T;L^p(\Omega)),
\end{align*}
for any $p<\infty$. \\
For the sequence $\{\nabla x'(\re^ {(\tau)})\}$, thanks to \eqref{boundx}, we have at least weak convergence, so we infer that
\begin{equation*}
\nabla x(\re^ {(\tau)})\rightharpoonup\nabla x(\re') \ \ \text{weakly  in} \ L^2(0,T;L^2(\Omega)).
\end{equation*}
Finally, thanks to \eqref{boundw}, we notice that as $(\lambda,\tau)\to 0$
\begin{equation}
\lambda w_\epsilon^{(\tau)} \to 0 \ \ \ \text{strongly in} \ L^2(0,T;L^2(\Omega)).
\end{equation}
Now we rewrite the weak formulations of the Definition \ref{defweakaprinc} for our piecewise constant functions. 
For any $v \in C^\infty_0(\Omega\times[0,T); \mathbb{R}^d), \ r\in C^\infty_0(\Omega\times[0,T); \mathbb{R})$ and $q \in C^\infty_0(\bar{\Omega}\times[0,T); \mathbb{R}^N)$ with $\nabla q \cdot \nu|_{\partial \Omega}=0$, we have that
\begin{equation*}
\begin{split}
&\int_{0}^{T}\iO \dt^{\tau} \ue^{(\tau)} \cdot v \ dzdt+\int_{0}^{T} \hat{b}(\ue^{(\tau)},\ue^{(\tau)},v) \ dt+\int_{0}^{T}\iO \nabla u_\epsilon^{(\tau)}:\nabla v \ dzdt \\
&+\int_{0}^{T}\iO\nabla p_\epsilon^{(\tau)}\cdot v \ dzdt=\int_{0}^{T}\iO f^{(\tau)}\cdot v \ dzdt, \\ \\
\epsilon& \int_{0}^{T}\iO \dt^{\tau}p_\epsilon^{(\tau)}   r \ dz dt +\int_{0}^{T}\iO (\div \ue^{(\tau)}) r \ dz dt=0, \\ \\ 
&\int_{0}^{T}\int_{\Omega}\dt^{\tau}\rho_\epsilon^{(\tau)} \cdot  q \ dzdt+\int_{0}^{T}\int_{\Omega}\nabla q : A_0^{-1}(\rho_\epsilon^{(\tau)})\nabla x(\rho_\epsilon^{(\tau)}) \ dzdt\\
&+\int_{0}^{T}\hat{b}(u_\epsilon^{(\tau)},\rho_\epsilon^{(\tau)},q)\ dt+\frac{1}{2}\int_{0}^{T}\iO\div \ue^{(\tau)} \big(\rho_\epsilon^{(\tau)}\cdot q \big)dz\\
&=-\lambda\int_{0}^{T}\iO(\Delta w_\epsilon^{(\tau)}\cdot \Delta q +w_\epsilon^{(\tau)}\cdot q)dzdt.
\end{split}
\end{equation*}
Before passing to the limit, observe that by a change of variable and the compact support of the test functions, we have that
\begin{align*}
\int_{0}^{T}\iO \dt^\tau \ue^{(\tau)}\cdot vdt dz=&-\int_{0}^{T}\iO\ue^{(\tau)}(t)\cdot\dt^{-\tau} v(t) dtdz\\
&-\iO u_0 \cdot \bigg[\frac{1}{\tau}\int_{0}^{\tau}v(t)dt\bigg]dz,
\end{align*}
the same holds for $\pe^{(\tau)}, \re^{(\tau)}$.\\
Observe that in the terms with $\hat{b}$, before taking the limit, we have to use the property \eqref{bhat3}, that is crucial, because otherwise the strong convergence result that we have would not be sufficient to pass into the limit.\\ 
Thanks to the strong and weak convergence results \eqref{conv1}-\eqref{conv4} we obtain
\begin{align*}
-&\int_{0}^{T}\iO \ue \cdot \dt v \ dzdt+\int_{0}^{T} \hat{b}(\ue,\ue,v) \ dt+\int_{0}^{T}\iO \nabla u_\epsilon:\nabla v \ dzdt \\
&+\int_{0}^{T}\iO\nabla p_\epsilon\cdot v \ dzdt=\iO u_0\cdot v(\cdot,0) \ dz +\int_{0}^{T}\iO f\cdot v \ dzdt, \\ \\
-\epsilon& \int_{0}^{T}\iO p_\epsilon \dt r \ dz dt +\int_{0}^{T}\iO (\div \ue )r \ dz dt=\iO p_0 w(\cdot,0) \ dz,\\ \\ 
&-\int_{0}^{T}\int_{\Omega}\rho_\epsilon' \cdot \dt q \ dzdt+\int_{0}^{T}\int_{\Omega}\nabla q : A_0^{-1}(\rho_\epsilon')\nabla x'(\rho_\epsilon') \ dzdt\\
&+\int_{0}^{T}\hat{b}(u_\epsilon,\rho_\epsilon',q)\ dt+\frac{1}{2}\int_{0}^{T}\iO\div \ue \big(\rho_\epsilon'\cdot q \big)dz\\
&=\iO(\rho')^0\cdot q(\cdot,0)dz.
\end{align*}
Then according to the Definition \eqref{defweakaprinc}, $(\ue,\pe,\re)$ is a weak solution to \eqref{aprdens}-\eqref{aprnav2}.\\ 
Finally, notice that all the estimates \eqref{estutau}-\eqref{boundw} are independent from $\alpha_0$, then thanks to the finite entropy assumption, $h(\rho_0)<+\infty$, we can perform the limit as $\alpha_0\to 0$ and conclude the result for a general initial data.
\section{Proof of the Theorem \ref{thconv}}
\label{secproof2}
Notice that all the estimates given in \eqref{estutau}-\eqref{boundrho} are independent from $\epsilon$. Then, by the lower semicontinuity of the norm, we have the same type of bounds for the solution $(\ue, \pe, \re)$ and for $x(\re)$. We resume here the estimates that we have 
\begin{align}
\label{estue}
&\norma{\ue}{L^\infty(0,T;H^1_0(\Omega))}+\norma{\ue}{H^\gamma(0,T;H^1_0(\Omega),L^2(\Omega))}\leq C, \\
\label{estpe}
&\sqrt{\epsilon}\norma{\pe}{L^\infty(0,T;L^2(\Omega))}+\norma{\epsilon\dt\pe}{L^\infty(0,T;L^2(\Omega))}\leq C, \\
\label{estxre}
&\norma{x(\re')}{L^\infty(0,T;L^\infty(\Omega))}+\norma{x(\re')}{L^\infty(0,T;H^1(\Omega))}\leq C,\\
\label{estre}
&\norma{\re'}{L^\infty(0,T;L^\infty(\Omega))}+\norma{\re'}{L^2(0,T;H^1(\Omega))}+\norma{\dt\re'}{L^\infty(0,T;\widetilde{H}^2(\Omega)')}\leq C.
\end{align}
From \eqref{estue}-\eqref{estre}, we infer that, up to subsequence, 
\begin{equation}
\label{weak1}
\begin{split}
u_{\epsilon'}\rightharpoonup u \   \text{in}  \ & L^2(0,T;H^1_0(\Omega))  \ \text{weakly}, \\ & L^\infty(0,T;L^2(\Omega)) \  \text{weakly  star},
\end{split}
\end{equation}
\begin{equation}
\label{weakpres}
\sqrt{\epsilon'}p_{\epsilon'}\overset{\star}{\rightharpoonup} \chi  \  \text{in}  \ L^\infty(0,T;L^2(\Omega))  \ \text{weakly star},
\end{equation}
\begin{equation}
\label{weak2}
\rho_{\epsilon'}\rightharpoonup\rho  \ \text{in} \ L^2(0,T;H^1(\Omega))  \ \text{weakly}.
\end{equation}
Due to Theorem \eqref{thHgamma} and Aubin-Lion's Lemma, we have
\begin{align}
\label{strong1}
u_{\epsilon'}\to u \  & \text{in}  \  L^2(0,T;L^2(\Omega))  \ \text{strongly}, \\
\label{strong2}
\rho_{\epsilon'}\to \rho \  & \text{in} \ L^2(0,T;L^2(\Omega)) \ \text{strongly}.
\end{align}
In addition, by the boundedness of the elements of the matrix $A_0^{-1}$ and $x'$ we have that 
\begin{align*}
A_0^{-1}(\rho_{\epsilon'})\to A_0^{-1}(\rho)\ \ \ &\text{strongly  in} \ L^p(0,T;L^p(\Omega)),\\
x'(\rho_{\epsilon'})\to x'(\rho) \ \ \ &\text{strongly  in} \ L^p(0,T;L^p(\Omega)),
\end{align*}
for any $p<\infty$. Moreover we also have 
\begin{equation*}
\nabla x'(\rho_{\epsilon'})\ \rightharpoonup \nabla x'(\rho_*) \ \ \text{in} \ L^2(0,T;L^2(\Omega)) \ \ \text{weakly}.
\end{equation*}
Now we are ready to pass to the limit for $\epsilon'\to0$.	\\ \indent
In the weak formulation \eqref{weakaprp} we take a test function $r\in C^\infty_0(\Omega\times[0,T);\mathbb{R})$ of the form $r(x,t)=\tilde{r}(x)\psi(t)$ where $\tilde{r}\in C^\infty_0(\Omega)$ and $\psi\in C^\infty_0([0,T))$. Then we have 
\begin{equation}
\label{lim2}
\int_{0}^{T}\psi(t)\bigg[\epsilon'\frac{d}{dt}\iO p_{\epsilon'}\tilde{r} \ dz +\iO(\div u_{\epsilon'})\tilde{r} \ dz\bigg]dt=0 \ \ \ \text{for  any} \ \psi \in C^\infty_0([0,T)).
\end{equation} 
As $\epsilon'\to 0$, by using \eqref{weak1} and \eqref{weakpres}, from \eqref{lim2}, we get that
\begin{equation*}
\iO (\div u)\tilde{r}dz=0, \  \text{for  any} \ \tilde{r} \ \text{in} \ C^\infty_0(\Omega),
\end{equation*}
which implies,
\begin{equation*}
\div u=0.
\end{equation*}
Hence $u\in \mathcal{V}$. Let us check that $(u,\rho)$ is a weak solution to \eqref{inceq1}-\eqref{inceq4}.\\ \indent

Let $v\in C^\infty_0(\Omega\times[0,T); \mathbb{R}^d)$ with $\div v=0$ as test function in \eqref{weakapru}, we get
\begin{equation}
\label{ueps'}
\begin{split}
&-\int_{0}^{T}\iO u_{\epsilon'}\cdot \dt v \ dtdz+\int_{0}^{T}\iO \nabla u_{\epsilon'}:\nabla v \ dtdz +\int_{0}^{T}\hat{b}(u_{\epsilon'},u_{\epsilon'},v) \ dt\\ &=
\iO u_0\cdot v(\cdot, 0) \ dz
+\int_{0}^{T}\iO f\cdot v \ dtdz, \ \ \text{for  all} \ v \in V.
\end{split}
\end{equation}  
For the densities $\rho_{\epsilon'}$, taking $q \in C^\infty_0(\bar{\Omega}\times[0,T); \mathbb{R}^N)$ with $\nabla q \cdot \nu|_{\partial \Omega}=0$, the weak problem \eqref{defweakrho} is 
\begin{equation}
\label{rhoeps'}
\begin{split}
&-\int_{0}^{T}\int_{\Omega}\rho_{\epsilon'} \cdot \dt q \ dzdt+\int_{0}^{T}\int_{\Omega}\nabla q : A_0^{-1}(\rho_{\epsilon'})\nabla x'(\rho_{\epsilon'}) \ dzdt\\
&+\int_{0}^{T}\hat{b}(u_{\epsilon'},\rho_{\epsilon'},q)\ dt+\frac{1}{2}\int_{0}^{T}\iO\div u_{\epsilon'} \big(\rho_{\epsilon'}\cdot q \big)dz\\
&=\iO(\rho')^0\cdot q(\cdot,0)dz,
\end{split}
\end{equation}

Now we can pass to the limit thanks to the weak convergence results \eqref{weak1}, \eqref{weak2} and the strong one \eqref{strong1}, \eqref{strong2}, and we obtain that the equation \eqref{ueps'}, as $\epsilon'\to 0$, becomes
\begin{equation}
\label{limit1}
\begin{split}
&-\int_{0}^{T}\iO u\cdot \dt v \ dtdz+\int_{0}^{T}\iO \nabla u:\nabla v \ dtdz \\
&+\int_{0}^{T}\hat{b}(u,u,v) \ dt\\ &=
\iO u_0\cdot v(\cdot, 0) \ dz
+\int_{0}^{T}\iO f\cdot v \ dtdz, \ \ \text{for  all} \ v \in V.
\end{split}
\end{equation}
While for the densities, taking the limit as $\epsilon'\to 0$, in the equation \eqref{rhoeps'},  we get
\begin{equation}
\label{limit2}
\begin{split}
&-\int_{0}^{T}\int_{\Omega}\rho \cdot \dt q \ dzdt+\int_{0}^{T}\int_{\Omega}\nabla q : A_0^{-1}(\rho)\nabla x'(\rho) \ dzdt\\
&+\int_{0}^{T}\hat{b}(u,\rho,q)\ dt+\frac{1}{2}\int_{0}^{T}\iO\div u \big(\rho\cdot q \big)dz\\
&=\iO(\rho^0)'\cdot q(\cdot,0)dz.
\end{split}
\end{equation}
By the definition of the operator $\hat{b}$ given in \eqref{defbhat}, since $\div u=0$, we observe that \eqref{limit1}  and \eqref{limit2} are exactly the weak formulations for the system \eqref{inceq1}-\eqref{inceq4}. \\ \indent
Finally to prove the result for the gradient of the pressure, notice that if we test \eqref{weakapru} against a function $v$ such that $\div v \neq 0$, we have that
\begin{align*}
\int_{0}^{T}\iO\nabla p_\epsilon\cdot v=&\int_{0}^{T}\iO \ue \cdot \dt v \ dzdt+\iO u_0\cdot v(\cdot,0) dz-\int_{0}^{T} \hat{b}(\ue,\ue,v) dt\\&-\int_{0}^{T}\iO \nabla u_\epsilon:\nabla v \ dzdt +\int_{0}^{T}\iO f\cdot v \ dzdt,
\end{align*}
and as $\epsilon\to 0$ the right hand side converges in $H^{-1}(\Omega)$ to a function that, comparing with \eqref{inceq3} is $\nabla p$.

\end{document}